\newtheorem{thm}{Theorem}[section]
\newtheorem{cor}[thm]{Corollary}
\newtheorem{lem}[thm]{Lemma}
\newtheorem{prop}[thm]{Proposition}
\theoremstyle{definition}
\theoremstyle{remark}
\newtheorem{rem}[thm]{Remark}
\numberwithin{equation}{section}
\newcommand{\R}{\mathbb R}
\newcommand{\Z}{\mathbb Z}
\newcommand{\Na}{\mathbb N}
\newcommand{\si}{\sigma}
\newcommand{\la}{\lambda}
\newcommand{\C}{{\mathbb C}}
\newcommand{\pa}{\partial }
\newcommand{\La}{\langle}
\newcommand{\Ra}{\rangle}
\newcommand{\Hc}{\mathcal H}
\newcommand{\sd}{{\mathbf S}^{d-1}}
\newcommand{\om}{ \omega}
\newcommand{\D}{\Delta}
\newcommand{\Dk}{\Delta_{\kappa}}
\newcommand{\dl}{\delta }
\newcommand{\ap}{\alpha}
\newcommand{\bt}{\beta }
\newcommand{\K}{\kappa }
\newcommand{\gm}{\gamma}
\newcommand{\Gm}{\Gamma}
\newcommand{\ve}{\varepsilon}
\renewcommand{\Re}{\operatorname{Re}}
\title[Ces\`{a}ro means for Dunkl--Hermite expansions]
{Mixed norm estimates for the Ces\`{a}ro means associated with  Dunkl--Hermite expansions}
\author[Pradeep B., L. Roncal, and S. Thangavelu]{Pradeep Boggarapu, Luz Roncal, and Sundaram Thangavelu}
\address[Pradeep B. and S. Thangavelu]{Department of Mathematics\\
 Indian Institute of Science\\
560 012 Bangalore, India}
\email{\{pradeep,veluma\}@math.iisc.ernet.in}
\address[L. Roncal]{Departamento de Matem\'aticas y Computaci\'on\\
    Universidad de La Rioja\\
    26004 Logro\~no, Spain}
\email{luz.roncal@unirioja.es}
\keywords{Reflection groups, Dunkl harmonic oscillator, Ces\`{a}ro
means, mixed norm spaces, vector-valued inequalities, generalized
Hermite functions, Hermite and Laguerre expansions.}
\subjclass[2010]{Primary: 42C10. Secondary: 43A90
 42B08, 42B35, 33C45.}
\thanks{All the three authors were supported by the J. C. Bose
Fellowship of the third author from the Department of Science and
Technology, Government of India. The second author was also
supported by grant MTM2012-36732-C03-02 from Spanish Government}
\begin{document}

\maketitle

\begin{abstract}
Our main goal in this article is to study mixed norm estimates for  the Ces\`{a}ro means associated with Dunkl--Hermite expansions on $\R^d$. These expansions arise when one considers the Dunkl--Hermite operator (or Dunkl harmonic oscillator) $H_{\K}:=-\D_{\K}+|x|^2$, where $\D_{\K}$ stands for the Dunkl--Laplacian. It is shown that the desired mixed norm estimates are equivalent to vector-valued inequalities for a sequence of Ces\`{a}ro means for Laguerre expansions with shifted parameter. In order to obtain such vector-valued inequalities, we develop an argument to extend these Laguerre operators for complex values of the parameters involved and apply a version of three lines lemma.

\end{abstract}
%------------------------------------------------------------------------------------
\section{Introduction and main results}

The Dunkl operators were introduced by C. F. Dunkl in \cite{DU}, where
he built a framework for a theory of special functions and integral transforms in several variables related to reflection groups. Such operators are relevant in physics, namely for the analysis of quantum many body systems of Calogero--Moser--Sutherland type
(see \cite{DV,LV}). From the mathematical analysis point of view, the importance of Dunkl operators lies on the fact that
they generalize the theory of symmetric spaces of Euclidean type. There is a vast literature related to Dunkl transform and Dunkl Laplacian, see for instance \cite{AS,DW,DJ,HS,RO1,TX,TX1}.

In \cite{RO} M. R\"{o}sler studied the Dunkl--harmonic oscillator (which we will also call Dunkl--Hermite operator)
\begin{equation*}
% \label{DHO}
H_{\K}:=-\D_{\K}+|x|^2,
\end{equation*}
 where $\D_{\K}$ stands for the Dunkl--Laplacian \eqref{eq:DunklLap}, and introduced the \textit{Dunkl--Hermite functions} $\Phi_{\mu,\K}$, $\mu\in\Na^d$, as eigenfunctions of $H_{\K}$. When $\K$, which is called a multiplicity function (see \eqref{eq:k}), is the null function, the situation is reduced to the standard Hermite operator
$H$ and $\Phi_{\mu,\K}$ become the usual Hermite functions $\Phi_{\mu}$, see \cite[page 521]{RO}. The set
$\{\Phi_{\mu,\K}\}_{\mu \in \Na^d}$ forms an orthonormal basis for $L^2(\R^d, h_{\K}^2\, dx)$ where $h_{\K}^2$ is a suitable
weight function defined in terms of the corresponding reflection group and the multiplicity function $\K$, see precise definitions in Section \ref{Prelim}.  Thus we have the orthogonal expansion
$$ f=\sum_{\mu \in \Na^d}(f, \Phi_{\mu,\K})\Phi_{\mu,\K} $$
which converges to $f$ in $L^2(\R^d, h_{\K}^2\, dx)$. Here $(\cdot,\cdot)$ is the inner product in $L^2(\R^d, h_{\K}^2\, dx)$. In short we can rewrite this as

 \begin{equation}
 \label{eq:L2expn}
 f=\sum_{j=0}^{\infty}P_{j, \K}f, \,\,\,\,\text{where}  \,\,\,\,P_{j, \K}f=\sum_{|\mu|=j}(f,\Phi_{\mu,\K})\Phi_{\mu,\K}.
 \end{equation}
 It is known that the Dunkl--Hermite functions are of Schwartz class and hence the projections $P_{j, \K}f$ in \eqref{eq:L2expn} make sense for any  $f \in L^p(\R^d, h_{\K}^2\, dx) $. However, when $p\neq 2$, we do not have any convergence results for $\sum_{j=0}^{\infty}P_{j, \K}f$. In  the case of $\K\equiv0$, where the above series reduces to standard Hermite expansions, it is also well known that the Hermite expansions fail to converge in $L^p(\R^d, dx)$ for $p\neq 2 $, unless $d=1$. Even when $d=1$, the series converges in $L^p(\R, dx)$ (or, equivalently, the corresponding partial sum operators are uniformly  bounded) if and only if $\frac43<p<4$ according to a  theorem by R. Askey and S. Wainger \cite{AW}.

In the absence of convergence results for the partial sums, we are led to consider other summability methods such as Ces\`{a}ro means or Bochner--Riesz means. For $N\in \Na$ and $\dl>0$ we define the Ces\`{a}ro means of order $\dl$ associated with the Dunkl--Hermite expansions by
 \begin{equation} \label{eq:DunklCesaro}
 \si_{N,\K}^{\dl}f(x):= \frac{1}{A_N^{\dl}}\sum_{j=0}^{N}
 A_{N-j}^{\dl}P_{j,\K}f(x)
\end{equation}
 where
$$  A_{j}^{\dl}:=\binom{j+\dl}{j}=\frac{\Gm(j+\dl+1)}{\Gm(j+1)\Gm(\dl+1)}  $$
are the binomial coefficients. When $\K\equiv0$ and $d\geq 2$, the operators $\si_{N,0}^{\dl}f$ (that we will simply denote by $\si_{N}^{\dl}f$) are nothing but the Ces\`{a}ro means for the standard Hermite expansions. They converge to $f$ in $L^p(\R^d, dx)$,  $1\leq p <\infty$, whenever $\dl >\frac{d-1}{2}$, see \cite{ST,ST1}. Actually, more precise results are known, giving critical indices of summability for any given $p$, $1\leq p <\infty$.

In the one dimensional case there is only one reflection group, viz.\ $ \Z_2 $, and the generalized Hermite expansion in this case
has been studied by \'O. Ciaurri and J. L. Varona \cite{CV}. Therein, the authors studied weighted norm inequalities for the
Ces\`{a}ro means.  However, for  the higher dimensional case we are not aware of any work dealing with Ces\`{a}ro or Riesz
means associated with Dunkl--Hermite expansions, which is the main concern of the present work.

The techniques used to study Ces\`{a}ro means $\si_{N}^{\dl}f$ for the standard Hermite expansions are not available in the case of Dunkl--Hermite expansions. This is mainly due to the lack of explicit formulas for the Dunkl--Hermite functions $\Phi_{\mu,\K}$ and their asymptotic properties.

However, the situation changes if we express the Ces\`{a}ro means in a convenient way, by separating variables in polar coordinates. Indeed, the basic fundamental idea is to write down the Ces\`{a}ro means $\si_{N,\K}^{\dl}f$ in terms of \textit{spherical $h$-harmonics}. The spherical $h$-harmonics (or simply \textit{$h$-harmonics}) are the restrictions of solid $h$-harmonics to $\sd$, where $\sd$ is the unit sphere in $\R^d$, $d\ge2$  (a good reference for $h$-harmonics is \cite[Chapter 5]{DUX}). By solid $h$-harmonics we mean homogeneous polynomials $P(x) $  satisfying $\Dk P(x)=0 $.  Let $\Hc_{m} $ be the space of all $h$-harmonics of degree $m $. Then the space $L^2(\sd, h^2_{\K}\,d\si ) $ is the orthogonal direct sum of the finite dimensional spaces $\Hc_{m}$ over $m=0,1,2,\ldots$. Thus there is an orthonormal basis $\{Y_{m,j}:  j= 1,2,\ldots,\,d(m), m = 0,1,2,\ldots  \}$, where
\begin{equation}\label{eq:dim}
d(m)=\operatorname{dim} (\Hc_{m}),
\end{equation} for $L^2(\sd, h^2_{\K}d\,\si )$
so that for each $m$,  $\{Y_{m,j}: j=1,2,\ldots,d(m)\}$ is an orthonormal basis of $h$-harmonics of degree $j$ for $\Hc_{m}$. For $x\in \R^d$,  if we take $x=rx'$, $r\in (0,\infty)$, $x'\in \sd$, then the $h$-harmonic expansion of a function $f$ is given by
\begin{equation*}
%\label{B}
f(rx')=\sum_{m=0}^\infty\sum_{j=1}^{d(m)}f_{m,j}(r)Y_{m,j}(x'),
\end{equation*}
where the $h$-harmonic coefficients are
\begin{equation}\label{eq:h-harcoeff}
f_{m,j}(r)=\int_{\sd}f(rx')Y_{m,j}(x')h_{\K}^2(x')d\si(x').
\end{equation}

Therefore, due to the ortogonality of the $h$-harmonics, it is natural to introduce the mixed norm spaces for Dunkl--Hermite expansions $L^{p,2}(\R^d, h_{\K}^2\, dx)$, namely, the space of all functions on $\R^d$ for which
\begin{equation}\label{mixedNorm}
\|f\|_{(p,2)}:=\bigg(\int_0^\infty\bigg (\int_{\sd}|f(rx')|^2h_{\K}^2(x')\,d\si(x') \bigg )^{\frac{p}{2}}r^{d+2\gm-1}\,dr \bigg)^{\frac{1}{p}}
\end{equation}
are finite. Here $\gm$ is a positive number intrinsic to the underlying multiplicity function $\K$, see Section \ref{Prelim} for the definition. Our main result is the following.

\begin{thm}\label{main}
Let $d\geq 2$ and $ \dl>\frac{d+2\gm-1}{2}$ where $\gm$ is as in \eqref{gamma}. Then, for any $1<p<\infty $, we have the uniform estimates
$$\|\si_{N,\K}^{\dl}f\|_{(p,2)}\leq C\|f\|_{(p,2)} .$$
Consequently, $\si_{N,\K}^{\dl}f$ converges to $f$ in $L^{p,2}(\R^d, h_{\K}^2 dx)$ as $N\rightarrow \infty$.
\end{thm}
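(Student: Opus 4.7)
The plan is to separate variables using spherical $h$-harmonics and reduce Theorem \ref{main} to a vector-valued inequality for Ces\`aro means associated with one-dimensional Laguerre expansions whose parameter depends on the degree of the $h$-harmonic. The abstract already announces that the proof hinges on extending those Laguerre Ces\`aro operators to complex orders and applying a three-lines lemma, so we follow that roadmap.

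\smallskip

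\textbf{Step 1: Polar decomposition of the projections.} Expand $f(rx')=\sum_{m,j} f_{m,j}(r) Y_{m,j}(x')$ as in \eqref{eq:h-harcoeff}. Using that $H_{\K}$ commutes with the action of the reflection group and preserves the bidegree in polar coordinates, one can write the Dunkl--Hermite basis in separated form $\psi_{k,m}(r)\,Y_{m,j}(x')$, where $\psi_{k,m}$ are (after the change of variable $s=r^2$) Laguerre functions with shifted parameter $\al_m=m+\gm+\tfrac{d}{2}-1$. Consequently $P_{j,\K}$, and hence $\si_{N,\K}^{\dl}$, acts diagonally on each angular piece, and there exist one-dimensional Ces\`aro-type operators $T_{N}^{m,\dl}$ (the Ces\`aro means of order $\dl$ for the Laguerre expansion of parameter $\al_m$) such that
\[
\si_{N,\K}^{\dl}f(rx') = \sum_{m=0}^{\infty}\sum_{j=1}^{d(m)} \bigl(T_{N}^{m,\dl}f_{m,j}\bigr)(r)\,Y_{m,j}(x').
\]

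\smallskip

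\textbf{Step 2: Reduction to a vector-valued inequality.} Inserting this decomposition in \eqref{mixedNorm} and using the orthonormality of $\{Y_{m,j}\}$ in $L^2(\sd,h_{\K}^2\,d\si)$, the mixed norm becomes
\[
\|\si_{N,\K}^{\dl}f\|_{(p,2)}^{p} = \int_{0}^{\infty}\bigg(\sum_{m,j}\bigl|T_{N}^{m,\dl}f_{m,j}(r)\bigr|^{2}\bigg)^{p/2} r^{d+2\gm-1}\,dr,
\]
and analogously for $\|f\|_{(p,2)}$. Thus Theorem \ref{main} is equivalent to the vector-valued bound
\[
\bigg\|\Bigl(\sum_{m,j}|T_{N}^{m,\dl}f_{m,j}|^{2}\Bigr)^{1/2}\bigg\|_{L^{p}(r^{d+2\gm-1}dr)} \le C \bigg\|\Bigl(\sum_{m,j}|f_{m,j}|^{2}\Bigr)^{1/2}\bigg\|_{L^{p}(r^{d+2\gm-1}dr)},
\]
with $C$ independent of $N$.

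\smallskip

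\textbf{Step 3: Complex interpolation.} To produce this $\ell^{2}$-valued estimate I would embed $T_{N}^{m,\dl}$ into an analytic family $T_{N}^{m,z}$ of Ces\`aro means of complex order $z=\dl+i\tau$, extending the definition of $A_{j}^{z}$ to complex $z$ through the Gamma function. On the line $\Re z=0$ Plancherel's theorem for the Laguerre system (Parseval on each $m$) produces a uniform bound with polynomial growth in $|\tau|$, which by summing squares over $(m,j)$ gives an $L^{2}(r^{d+2\gm-1}dr;\ell^{2})$ estimate. On a line $\Re z=\dl_{0}$ slightly above $\tfrac{d+2\gm-1}{2}$, the goal is an $L^{1}$ or weak-type $L^{1}$ estimate for the operator acting on $\ell^{2}$-valued functions; here one exploits pointwise kernel bounds for the Laguerre Ces\`aro kernel of parameter $\al_m$, summed in $m$, again with at most polynomial growth in $|\tau|$. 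A suitable vector-valued version of the three lines lemma (in the spirit of Stein's analytic interpolation) then yields the required $L^{p}$ estimate at the original order $\dl$ for every $1<p<\infty$, provided $\dl>\tfrac{d+2\gm-1}{2}$. Norm convergence $\si_{N,\K}^{\dl}f\to f$ follows by the standard density argument, since the Dunkl--Hermite functions are dense in $L^{p,2}(\R^d,h_{\K}^2 dx)$ and the partial sums reproduce finite linear combinations.

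\smallskip

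\textbf{Main obstacle.} The hard part is the endpoint line $\Re z=\dl_{0}$: one needs pointwise kernel bounds for the Laguerre Ces\`aro kernels of complex order that are uniform in the Laguerre parameter $\al_m$, so that after summing over $m$ against $\ell^{2}$-coefficients the resulting scalar operator is of weak type $(1,1)$ with controlled growth in $\Im z$. Tracking the explicit dependence on $\al_m$ (and hence on $m$) in the asymptotics of Laguerre functions and of the Ces\`aro kernel, and then splitting the kernel into local, transition and tail regions as in the Askey--Wainger framework while keeping the dependence on $m$ summable, is the technical crux on which the three-lines argument rests.
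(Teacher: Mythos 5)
Your Steps 1 and 2 follow the paper's reduction faithfully: the separation of variables via spherical $h$-harmonics, the identification of the radial operators with Ces\`aro means for Laguerre expansions of type $\la+m=\frac d2+\gm-1+m$, and the equivalence of the mixed-norm bound with the $\ell^2$-valued inequality are exactly what Corollary \ref{kernelexp}, Proposition \ref{prop:relKernels} and the displayed computation in the paper's proof of Theorem \ref{main} establish. The gap is in Step 3, which is where the whole difficulty sits and which you leave as a plan resting on an unproved ``main obstacle''. You propose to interpolate in the Ces\`aro order $z=\dl+i\tau$, with Plancherel at $\Re z=0$ and a weak-type $(1,1)$ estimate at $\Re z=\dl_0$ for the $\ell^2$-valued family; the latter requires pointwise bounds for complex-order Laguerre Ces\`aro kernels that are uniform (indeed summable against $\ell^2$ coefficients) in the parameter $\al_m=m+\gm+\frac d2-1$ as $m\to\infty$. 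You correctly flag this as the crux, but you do not supply it, and it is not a routine extension of the Askey--Wainger or Thangavelu kernel analysis: the known estimates for Laguerre Ces\`aro kernels degrade as the type parameter grows, and no uniform-in-$m$ weak-$(1,1)$ statement of the kind you need is available off the shelf. As written, the argument is therefore incomplete precisely at its central step.

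The paper avoids this obstacle entirely by interpolating in a different variable: not the order $\dl$ but the parameter $\gm$ itself, over the strip $0\le\Re\zeta\le\frac12$ (Propositions \ref{prop:Compkernel} and \ref{prop:estimate}, via the Stein--Weiss form of the three lines lemma). At the endpoints $\gm=0$ and $\gm=\frac12$ the Laguerre operators of type $\frac d2-1+m$ and $\frac{d+1}{2}-1+m$ recombine, after summing over $m$ and $j$, into the full $d$-dimensional and $(d+1)$-dimensional Hermite Ces\`aro means; the vector-valued bounds there come essentially for free from the known $L^p$ theory for $\si_N^{\dl}$ on $\R^d$ and $\R^{d+1}$, together with Krivine's theorem (Theorem \ref{Kr}) applied to the maximal operator $\mathbb{S}_N^{\dl}$ controlled by $M_q$, or alternatively Rubio de Francia's transference for rotation-invariant operators (Theorem \ref{th:RdF}). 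No estimate uniform in $m$ for individual Laguerre kernels is ever needed, because the $\ell^2$ sum over $(m,j)$ is absorbed back into a single Euclidean operator. To salvage your route you would have to prove the uniform-in-$m$ endpoint estimate yourself; otherwise the interpolation parameter should be switched from $\dl$ to $\gm$ as the paper does.
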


We will sketch the outline of the proof. The first step is to write down the Ces\`{a}ro means $\si_{N,\K}^{\dl}f$ evaluated at $x=rx'$ in terms of spherical $h$-harmonics. Then, a Funk--Hecke formula is used to rewrite the projection of the kernel associated with $\si_{N,\K}^{\dl}f$ in such a basis as a sum of shifted Ces\`{a}ro kernels for the usual Hermite expansions. Indeed, we will prove that
$$\si_{N,\K}^{\dl}f(rx') = \sum_{m=0}^N \sum_{j=1}^{d(m)}T_{N,m}^{\dl,\gm}
f_{m,j}(r) Y_{m,j}(x').$$
In the expression above, $T_{N,m}^{\dl,\gm}$ are the linear operators defined by
\begin{equation}\label{eq:alterCesaro}
T_{N,m}^{\dl,\gm} f_{m,j}(r):=\int_0^\infty K_{N,m}^{\dl,\gm}(r, s)
f_{m,j}(s)s^{d+2\gm-1}ds,
\end{equation}
where  $f_{m,j}(r)$ are the $h$-harmonic coefficients of $ f$ as in \eqref{eq:h-harcoeff}. The kernels $ K_{N,m}^{\dl,\gm} $ can be written in terms of the kernels of  either the $d$-dimensional or $d+1$-dimensional Ces\`{a}ro means for the standard Hermite expansions with shifted parameters. See Corollary \ref{kernelexp}. But now, as the standard Hermite polynomials evaluated at $x$ are connected with the Laguerre polynomials evaluated at $ |x|^2$, the kernels $ K_{N,m}^{\dl,\gm} $ can also be written in terms of Laguerre functions. See Proposition \ref{prop:relKernels}. So when trying to prove the $(p,2)$ boundedness of the Ces\`{a}ro operator, we end up with a vector-valued extension for a sequence of operators associated to Laguerre expansions.

\begin{thm}[Vector-valued inequalities for $T_{N,m}^{\delta,\gm}$]
\label{Th:Vvine}
Let $0\leq \gm \leq \frac12 $, $\dl>\tfrac{d+2\gm-1}{2}$ and $T_{N,m}^{\delta,\gm}$
be the operators defined in \eqref{eq:alterCesaro}. Then, for any $1<p<\infty$,
there is a constant $C$ independent of $N$ such that
$$\Big\|\Big(\sum_{m=0}^\infty\sum_{j=1}^{d(m)}|T_{N,m}^{\dl,\gm}f_{m,j}(r)|^2\Big)^{1/2}
\Big\|_{L^p(\R^+, r^{d+2\gamma-1}dr)}\le
C\Big\|\Big(\sum_{m=0}^\infty\sum_{j=1}^{d(m)}|f_{m,j}(r)|^2\Big)^{1/2}\Big\|_{L^p(\R^+,
r^{d+2\gamma-1}dr)},$$
where $d(m)$ is as in \eqref{eq:dim}, for any sequence of functions $f_{m,j}\in L^p(\R^+,r^{d+2\gamma-1}dr)$ for which the right hand side is finite.
\end{thm}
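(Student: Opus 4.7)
The plan is to apply Stein's complex interpolation theorem to an analytic family of vector-valued operators obtained by letting the Ces\`aro order $\dl$ vary complexly, using the identification of $K_{N,m}^{\dl,\gm}$ with shifted Ces\`aro--Laguerre kernels provided by Proposition \ref{prop:relKernels}. Concretely, one replaces the Ces\`aro coefficients $A_j^{\dl}=\Gm(\dl+j+1)/(\Gm(\dl+1)\Gm(j+1))$ by $A_j^z$ for complex $z$, thereby obtaining operators $T_{N,m}^{z,\gm}$ that are analytic in $z$ off the negative integers. The associated vector-valued operator
\[
\mathbf{T}^z\colon(f_{m,j})_{m,j}\longmapsto\bigl(T_{N,m}^{z,\gm}f_{m,j}\bigr)_{m,j}
\]
is then an analytic family on the strip $S=\{z\in\C:0\le\Re z\le\dl_0\}$ for any fixed $\dl_0>\dl$.

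\emph{Endpoint 1} ($\Re z=0$): by Proposition \ref{prop:relKernels}, $T_{N,m}^{is,\gm}$ acts as a diagonal multiplier on the Laguerre basis of parameter $\alpha_m$ (linear in $m$), with symbol bounded by $|A_{N-j}^{is}/A_N^{is}|\le C(1+|s|)^A$ uniformly in $N,m,j$. Applying Parseval on $L^2(\R^+,r^{d+2\gm-1}dr)$ fibrewise in $(m,j)$ and then summing yields
\[
\|\mathbf{T}^{is}\|_{L^2(\R^+,r^{d+2\gm-1}dr;\ell^2)\to L^2(\R^+,r^{d+2\gm-1}dr;\ell^2)}\le C(1+|s|)^A.
\]

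\emph{Endpoint 2} ($\Re z=\dl_0$, with $\dl_0$ sufficiently large): the target is a scalar pointwise majorization
\[
|T_{N,m}^{\dl_0+is,\gm}f(r)|\le C(1+|s|)^B\,\mathcal{M}f(r),\qquad r>0,
\]
uniform in $m$ and $N$, where $\mathcal{M}$ is a Hardy--Littlewood-type maximal operator on $(\R^+,r^{d+2\gm-1}dr)$. This would be obtained by extracting pointwise estimates for the high-order Ces\`aro--Laguerre kernel with shifted parameter $\alpha_m$ from the explicit formulas in Proposition \ref{prop:relKernels} together with standard Laguerre asymptotics. Once it is in hand, the Fefferman--Stein vector-valued maximal theorem immediately upgrades the bound to
\[
\|\mathbf{T}^{\dl_0+is}\|_{L^p(\ell^2)\to L^p(\ell^2)}\le C_p(1+|s|)^B\qquad(1<p<\infty).
\]

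Stein's interpolation theorem applied to the analytic family $\mathbf{T}^z$ on the strip $S$ then combines the two endpoints into the desired vector-valued $L^p(\ell^2)$ estimate at $z=\dl$, provided $(d+2\gm-1)/2<\dl<\dl_0$; since $\dl_0$ can be chosen arbitrarily large, the whole range stated in the theorem is covered. The principal obstacle I anticipate is the uniformity in $m$ of the pointwise kernel estimate in Endpoint 2: as $\alpha_m\to\infty$ with $m$, one has to balance the high Ces\`aro order $\dl_0$ against Plancherel--Rotach type asymptotics of Laguerre functions that are parameter-uniform, and the hypothesis $0\le\gm\le1/2$ likely enters precisely here, as the range in which the underlying scalar Ces\`aro--Laguerre kernel admits sufficiently clean pointwise control.
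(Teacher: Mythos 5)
There is a genuine gap, and it sits exactly where you flag it: Endpoint 2 is not a technical loose end but the entire difficulty of the problem, and the interpolation scheme you build around it cannot deliver the stated range even if Endpoint 2 were granted. First, the pointwise majorization $|T_{N,m}^{\dl_0+is,\gm}f(r)|\le C(1+|s|)^B\mathcal{M}f(r)$ uniformly in $m$ is a statement about Ces\`aro--Laguerre kernels of type $\la+m$ with $\la=\tfrac{d}{2}+\gm-1$ \emph{non-half-integral}; no such parameter-uniform estimates are available from ``standard Laguerre asymptotics,'' and the paper's whole architecture is designed to avoid ever needing them. The paper instead interpolates in the \emph{type} parameter $\gm$ along the strip $0\le\Re\zeta\le\tfrac12$ (Propositions \ref{prop:Compkernel} and \ref{prop:estimate}), tying the Ces\`aro order to $\zeta$ via $\dl(\zeta+\ve)$ and inserting the weights $r^{2\dl(\zeta)/p}s^{2\dl(\zeta)/p'}$ into the kernel, precisely so that on the two boundary lines the operators reassemble into genuine $d$- and $(d+1)$-dimensional \emph{Hermite} Ces\`aro means, for which the pointwise maximal bound above the critical index (Theorem \ref{pointwiseCesaro}, i.e.\ \cite[Theorem 3.3.3]{ST}) is known; Krivine's theorem (Theorem \ref{Kr}) then supplies the $\ell^2$-valued endpoint bounds, and the Stein--Weiss three lines lemma closes the strip. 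The cases $\gm=0,\tfrac12$ themselves are dispatched by Rubio de Francia's rotation-invariance transference rather than by Fefferman--Stein.

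Second, even granting Endpoint 2 at some large $\dl_0$, Stein interpolation in the \emph{order} between $L^2(\ell^2)$ at $\Re z=0$ and $L^q(\ell^2)$ at $\Re z=\dl_0$ reaches, at order $\dl=\theta\dl_0$, only those exponents $p_\theta$ with $\tfrac{1}{p_\theta}=\tfrac{1-\theta}{2}+\tfrac{\theta}{q}$; solving for $q\in(1,\infty)$ forces $\bigl|\tfrac1p-\tfrac12\bigr|<\tfrac{\dl}{2\dl_0}$. So for $\dl$ just above the critical index $\tfrac{d+2\gm-1}{2}$ and $\dl_0$ ``sufficiently large,'' you only capture a small neighbourhood of $p=2$, not all $1<p<\infty$. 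The only way out is to take $\dl_0=\dl$ itself, i.e.\ to have the uniform-in-$m$ pointwise kernel estimate at the critical order --- which is the theorem (and more) and removes the need for interpolation altogether. Your Endpoint 1 computation is fine (the operators are unitarily conjugate to diagonal Laguerre multipliers with symbol $A_{N-m-2j}^{is}/A_N^{is}$ of admissible growth), but as it stands the proposal reduces the theorem to an unproven claim that is at least as hard as the theorem, and the interpolation wrapped around it loses the critical index for $p\neq2$.
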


Then, it will be seen that Theorem \ref{main} is a direct consequence of Theorem \ref{Th:Vvine}.

In order to prove Theorem \ref{Th:Vvine}, we need to use a sophisticated version of the three lines lemma. This will recquire to extend the Ces\`{a}ro means for the usual Hermite expansions for complex values of $\delta$ and also complexify the type of the Laguerre functions involved.

The natural question that arises immediately concerns the convergence of Ces\`{a}ro means in $L^p(\R^d,h_{\kappa}^2\,dx)$, when $\kappa\neq 0$. Nothing is known about this, neither the techniques that could be used to solve the problem.

The paper is organized as follows. In Section \ref{Prelim} we recall basics about the general Dunkl context and the
Funk--Hecke identity. Section \ref{sec:standardHermite} is devoted to the study of Ces\`{a}ro means for the standard Hermite expansions, and a vector-valued inequality for an operator related to these Ces\`{a}ro means is proved. In Section \ref{sec:cesaroDunkl} we express the Ces\`{a}ro kernels for the Dunkl--Hermite expansions in terms of the same for the standard Hermite expansions, and we bring out the connection with Laguerre expansions, which allows us to introduce an analytic family of operators. With this, we prove the main results in Section \ref{subsec:Mainproofs}. For the sake of reading, we move the proofs of certain propositions in Section \ref{sec:cesaroDunkl} to Section \ref{sec:propositions}. Section \ref{sec:technical} contains also some technical results and integral formulas for Bessel functions which involve ultraspherical polynomials with real and complex parameters.

\subsubsection*{Notation.} Throughout the paper, we will use the following notation. In general, all the operators, kernels and functions are defined in the ambient space $\R^d$, so in these cases we will mostly omit the dimensional parameter $d$. For instance, we will write just $\Phi_{k}$ or $\si_{N}^{\dl}$. Nevertheless, whenever the dependence of the kernels on the dimension is explicitly needed we will add a superindex $d$, namely we will write $\Phi_{k}^{(d)}$ or $\si_{N}^{\dl, d}$. Moreover, when referring to these objects in $\R^{d+1}$, we will always explicitly add a superindex $d+1$ (like $\Phi_{k}^{(d+1)}$ or $\si_{N}^{\dl, d+1}$), to denote the Hermite functions, Ces\`{a}ro means and kernels in $\R^{d+1}$. The number  of operators and parameters appearing in this work is quite large, and we tried to reduce the notation to a minimum. Because of this, we present in Section \ref{subsec:Mainproofs} a table with a summary of the most important operators and kernels involved directly in the proof of the main theorems. For $1\le p\le\infty$, $p'$ will denote its conjugate, $1/p+1/p'=1$. Moreover, we shall write $C$ to denote positive constants independent of significant quantities the meaning of which can change from one occurrence to another.
%------------------------------------------------------------------------------------

\section{The general Dunkl setting}
\label{Prelim}

For completeness, in this section we collect several facts concerning the general Dunkl
setting and the Dunkl harmonic oscillator. For a more detailed exposition on these topics,
we refer the reader to \cite{DU, DUX, RO}.

We use the notation $\La \cdot , \cdot \Ra $ for the standard
 inner product on $\R^d $.  For $\nu\in \R^d\setminus\{0\}$, we denote by $\si_{\nu}$ the orthogonal reflection in the hyperplane perpendicular to $\nu$, i.e.,
$$\si_{\nu}(x)= x - 2 \; \frac{\La \nu, x \Ra}{|\nu |^2} \nu .$$
A finite subset $R\subset  \R^d\setminus\{0\}$ is a root system if $\si_{\nu}(R)=R$, for all $\nu\in R$. Each root system can be written as a disjoint union $R=R_+\cup (-R_+)$, where $R_+$ and $-R_+$ are separated by a hyperplane through the origin. Such $R_+$ is called the set of all positive roots in $R$. The group $G$ generated by the reflections $\{\si_{\nu}:\nu\in R \}$ is called the reflection group or Coxeter group  associated with $R$. A function
\begin{equation} \label{eq:k}
\K:R\to [0,\infty)
\end{equation}
which is invariant under the action of $G$ on the root system $R$ is called a multiplicity function.  Let $T_j$, $j=1,2, \ldots d$, be the difference-differential operators  defined by
$$ T_jf(x) = \frac{\pa f}{\pa x_j}(x) +\sum_{\nu \in R_+}\K(\nu)\nu _j \frac{f(x)-f(\si_{\nu}x)}{\La \nu, x \Ra}. $$
These operators, known as Dunkl operators, form a family of commuting operators. The Dunkl Laplacian $\Dk $ is then defined to be the operator
\begin{equation}\label{eq:DunklLap}
\Dk = \sum _{j=1}^d T_j^2
\end{equation}
which can be explicitly calculated, see \cite[Theorem 4.4.9]{DUX}. It is known that the operators $T_j$ have a joint eigenfunction
$E_{\K}(x,y)$ satisfying
$$ T_j E_{\K}(x,y)=y_j E_{\K}(x,y), \qquad  j=1,\ldots,d. $$
The function $(x,y)\mapsto E_{\K}(x,y)$ is called the \textit{Dunkl kernel} or the \textit{generalized exponential kernel} on $\R^d\times \R^d$, which is the generalization of the exponential function $e^{\langle x,y\rangle}$. Associated with the root system $R$ and the multiplicity function $\K $, the weight function $h^2_{\K}(x)$ is defined by
$$h^2_{\K}(x):=\prod_{\nu \in R_+}|\La x, \nu \Ra |^{2\K(\nu)}. $$
The nonnegative real number
\begin{equation}\label{gamma}
\gm = \sum_{\nu \in R_+}\K(\nu)
\end{equation}
defined in terms of the multiplicity function $\K(\nu)$ plays an important role in Dunkl theory. Note that $h^2_{\K}(x)$ is homogeneous of degree $2\gm $, which motivates the definition of mixed norm spaces as in \eqref{mixedNorm}.

In the Dunkl setting we have a \textit{Funk--Hecke formula for $h$-harmonics}.
To state such formula, we need to recall the intertwining operator in the Dunkl setting. It is known that there is an operator $V_{\K}$ satisfying $T_jV_{\K}=V_{\K} \frac{\pa}{\pa x_j} $. However, the explicit form of $V_{\K}$ is not known, except in a couple of simple cases, but it is a useful operator. In particular, the Dunkl kernel is given by $E_{\K}(x,y) = V_{\K}e^{\La \cdot ,y \Ra}(x) $. The Funk--Hecke formula for $h$-harmonics is as follows (see \cite[Theorem 7.2.7]{DX} or \cite[Theorem 5.3.4]{DUX}).

\begin{thm}[Funk--Hecke for $h$-harmonics]\label{Th:FunkHeckeh}
Let $f$ be a continuous function defined on $[-1, 1]$ and
$\la=\frac{d}{2}+\gm-1$. Then for every $Y_{m,j} \in \Hc_{m}$,
$$\int_{\sd}V_{\K}f(\La x',\cdot \Ra)(y')Y_{m,j}(y')h_{\K}^2(y')d\si(y')=\Lambda_m^{\K}(f)Y_{m,j}(x') $$
where $\Lambda_m^{\K}(f) $ is a constant defined by
$$\Lambda_m^{\K}(f)= \frac{\om_d^{\K}\Gm(\la+1)}{\sqrt{\pi}\Gm(\la+1/2)} \int_{-1}^1f(u)P_m^{\la}(u)(1-u^2)^{\la-\frac{1}{2}}\,du $$
with
\begin{equation}\label{constSphere}
\om_d^{\K}:=\int_{\sd}h_{\K}^2(\om)d\si(\om).
\end{equation}
\end{thm}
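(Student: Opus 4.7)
The plan is to reduce to a favorable basis of test functions and then exploit the reproducing kernel of $\Hc_m$ in $L^2(\sd,h_\K^2\,d\si)$ in its Dunkl form. First, I would use density: by Weierstrass approximation, continuous $f$ on $[-1,1]$ can be uniformly approximated by polynomials, and every polynomial admits a finite expansion in the ultraspherical polynomials $P_k^{\la}$. Since both sides of the claimed identity depend continuously and linearly on $f$ in the sup norm on $[-1,1]$, it suffices to verify the formula when $f=P_k^{\la}$ for each $k\ge 0$.

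The central ingredient is the Dunkl reproducing kernel identity for $h$-harmonics: for $x',y'\in\sd$,
\[
Z_m^{\K}(x',y'):=\sum_{j=1}^{d(m)}Y_{m,j}(x')Y_{m,j}(y')=c_{m,\la}\,V_{\K}\bigl[P_m^{\la}(\La x',\cdot\Ra)\bigr](y'),
\]
where $\la=\tfrac{d}{2}+\gm-1$ and $c_{m,\la}$ is an explicit constant depending on the chosen normalization of $P_m^{\la}$. This formula reflects the fact that $V_{\K}$ is an isomorphism of the classical space of harmonics onto $\Hc_m$ preserving degree, together with the classical expansion of $P_m^{\la}(\La x',\cdot\Ra)$ in terms of zonal harmonics. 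Taking this identity for granted (or proving it by expanding $P_m^{\la}(\La x',\cdot\Ra)$ in the monomial basis in the $y$-variable and using that $V_{\K}$ intertwines $\pa/\pa x_j$ with $T_j$, so maps harmonics to $h$-harmonics), the reproducing property of $Z_m^{\K}$ reads
\[
\int_{\sd}Z_m^{\K}(x',y')Y_{m,j}(y')h_{\K}^2(y')\,d\si(y')=Y_{m,j}(x').
\]

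Putting the last two displays together gives, for $f=P_m^{\la}$,
\[
\int_{\sd}V_{\K}\bigl[P_m^{\la}(\La x',\cdot\Ra)\bigr](y')\,Y_{m,j}(y')h_{\K}^2(y')\,d\si(y')=c_{m,\la}^{-1}Y_{m,j}(x'),
\]
while for $f=P_k^{\la}$ with $k\neq m$ the integral vanishes, because $V_{\K}[P_k^{\la}(\La x',\cdot\Ra)](y')$ is (by the same kernel identity) a multiple of $Z_k^{\K}(x',y')$, which is orthogonal to $Y_{m,j}$ in $L^2(\sd,h_{\K}^2\,d\si)$. Both conclusions are captured by a single formula $\Lambda_m^{\K}(P_k^{\la})=c_{m,\la}^{-1}\dl_{km}$. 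To match with the claimed expression for $\Lambda_m^{\K}(f)$, I would test it on $f=P_k^{\la}$ and use the orthogonality relation $\int_{-1}^1 P_m^{\la}(u)P_k^{\la}(u)(1-u^2)^{\la-1/2}\,du=N_m^{\la}\dl_{km}$ for ultraspherical polynomials; the ratio $\om_d^{\K}\Gm(\la+1)/(\sqrt{\pi}\Gm(\la+1/2))$ appearing in the statement is exactly what is needed to normalize $N_m^{\la}$ to $c_{m,\la}^{-1}$, and this is a routine (if tedious) comparison of Gamma-factors using \eqref{constSphere}.

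The main obstacle, as usual in the Dunkl setting, is the kernel identity in the second paragraph: the intertwining operator $V_{\K}$ is not given by an explicit formula outside a few examples, so one cannot simply compute $V_{\K}[P_m^{\la}(\La x',\cdot\Ra)](y')$ directly. The standard workaround is to establish the identity abstractly — by verifying that both sides belong to $\Hc_m$ in $x'$ (and separately in $y'$), agree up to scalar by the one-dimensionality of the zonal subspace of $\Hc_m$ with fixed pole, and finally fix the constant by integrating against $1$ or by evaluating at a convenient point. Once this identity is in hand, the rest of the proof is linear-algebraic bookkeeping.
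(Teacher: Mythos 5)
The paper does not prove this theorem at all: it is imported verbatim from the literature (the text cites Theorem 7.2.7 of Dai--Xu and Theorem 5.3.4 of Dunkl--Xu), so there is no in-paper argument to compare yours against. Your outline is in fact the standard proof found in those references: reduce by density to $f=P_k^{\la}$, invoke the identity expressing the reproducing kernel $Z_m^{\K}$ of $\Hc_m$ in $L^2(\sd,h_{\K}^2\,d\si)$ as $c_{m,\la}V_{\K}\bigl[P_m^{\la}(\La x',\cdot\Ra)\bigr](y')$, and finish with orthogonality of the $\Hc_k$ and of the ultraspherical polynomials. Granting that kernel identity, the rest of your argument is correct bookkeeping (modulo one small point: continuity of the left-hand side in $\|f\|_{\infty}$ is not free --- it uses R\"osler's theorem that $V_{\K}$ is positive, hence a contraction on continuous functions, and this should be flagged).

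The genuine gap is that the kernel identity \emph{is} the theorem, and both justifications you sketch for it would fail in the Dunkl setting. First, for $\gm>0$ the function $y\mapsto |y|^mP_m^{\la}\bigl(\La x',y/|y|\Ra\bigr)$ with the \emph{shifted} parameter $\la=\tfrac{d}{2}+\gm-1$ is not an ordinary harmonic polynomial (it is harmonic only for $\la=\tfrac{d-2}{2}$), so the fact that $V_{\K}$ intertwines $\D$ with $\Dk$ and maps classical harmonics to $h$-harmonics does not by itself show that $V_{\K}\bigl[P_m^{\la}(\La x',\cdot\Ra)\bigr]$ lies in $\Hc_m$; the shift of the Gegenbauer index from $\tfrac{d-2}{2}$ to $\tfrac{d-2}{2}+\gm$ is exactly the nonobvious content. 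Second, the ``one-dimensionality of the zonal subspace'' argument is unavailable: the symmetry group here is the finite reflection group $G$, not $SO(d)$, the stabilizer of a generic pole $x'$ is trivial, and invariance under it does not cut $\Hc_m$ down to a line. The actual proof in Dunkl--Xu instead computes the orthogonal projection onto $\Hc_m$ via the Poisson-type generating function $\sum_m \tfrac{m+\la}{\la}C_m^{\la}(u)r^m=\tfrac{1-r^2}{(1-2ru+r^2)^{\la+1}}$ composed with $V_{\K}$, which is a genuinely different (and unavoidable) piece of input. So your proof is the right skeleton, but its load-bearing step is asserted rather than established, and the two proposed routes to it do not close.
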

By applying Theorem \ref{Th:FunkHeckeh} to  the function $f(t) =e^{rst},~~~r, s \geq 0,$ and using  the fact $V_{\K}f(\La x', y'\Ra) = E_{\K}(rx', sy')$, we immediately obtain the following.

\begin{cor}[Funk--Hecke for the Dunkl kernel] \label{eq:Hecke}
 Let $\la=\frac{d}{2}+\gm-1$. Then for every $Y_{m,j} \in \Hc_{m}$,
\begin{align*}
\int_{\sd}E_{\K}(rx', sy')&Y_{m,j}(y')h_{\K}^2(y')\,d\si(y')\\ \notag &=\frac{\om_d^{\K}\Gm\big (\tfrac{d}{2}+\gm\big )}{\sqrt{\pi}\Gm\big (\tfrac{d-1}{2}+\gm\big )} \left(\int_{-1}^{1}e^{rsu}P_m^{\la}(u)(1-u^2)^{\la-\frac12}\,du\right)~ Y_{m,j}(x'),
\end{align*}
where $\om_d^{\K}$ is as in \eqref{constSphere}.
\end{cor}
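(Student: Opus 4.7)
The plan is to specialize Theorem \ref{Th:FunkHeckeh} to the one-parameter family $f(t)=e^{rst}$, and then simply collect the resulting formula. Since the excerpt labels this as an immediate consequence, the proof reduces to two identifications: matching the left-hand side via the intertwining operator, and evaluating the constant $\Lambda_m^{\K}(f)$ explicitly.

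For the left-hand side, with $f(t)=e^{rst}$ the function on which $V_{\K}$ is to act becomes
$$y\longmapsto f(\La x',y\Ra) = e^{rs\La x',y\Ra} = e^{\La rx',\, sy\Ra}.$$
By the defining property of the Dunkl kernel, $V_{\K}(e^{\La\cdot,\eta\Ra})(\xi) = E_{\K}(\xi,\eta)$, and by absorbing the scalar $s$ into the parameter $\eta$ (which is legitimate because $V_{\K}$ acts on the free variable) one obtains $V_{\K}(e^{rs\La x',\cdot\Ra})(y') = E_{\K}(rx',sy')$. Substituting this identification into Theorem \ref{Th:FunkHeckeh} turns its left-hand side into the integral appearing in the statement of the corollary.

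For the right-hand side, apply the explicit formula for $\Lambda_m^{\K}(f)$ given in Theorem \ref{Th:FunkHeckeh} to $f(u)=e^{rsu}$, and specialize $\la=\tfrac{d}{2}+\gm-1$ so that $\la+1=\tfrac{d}{2}+\gm$ and $\la+\tfrac12=\tfrac{d-1}{2}+\gm$. This produces precisely the constant $\frac{\om_d^{\K}\Gm(\tfrac{d}{2}+\gm)}{\sqrt{\pi}\Gm(\tfrac{d-1}{2}+\gm)}$ multiplied by the integral $\int_{-1}^{1}e^{rsu}P_m^{\la}(u)(1-u^2)^{\la-\frac12}du$ and the spherical $h$-harmonic $Y_{m,j}(x')$, as required. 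The only conceptual step is the reparametrization $V_{\K}(e^{rs\La x',\cdot\Ra})(y')=E_{\K}(rx',sy')$; once this is granted, the rest is pure substitution.
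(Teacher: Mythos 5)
Your proposal is correct and coincides with the paper's own argument: the paper likewise obtains the corollary by applying Theorem \ref{Th:FunkHeckeh} to $f(t)=e^{rst}$ and invoking the identification $V_{\K}f(\La x',\cdot\Ra)(y')=E_{\K}(rx',sy')$, after which the constant $\Lambda_m^{\K}(f)$ is read off directly. Nothing further is needed.
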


%----------------------------------------------------------------------------
\section{Ces\`{a}ro means for the standard Hermite and extension to complex parameters}
\label{sec:standardHermite}
%----------------------------------------------------------------------------

One of the key points to prove Theorem \ref{main} is to express Ces\`{a}ro kernels for the Dunkl--Hermite expansions in terms of
Ces\`{a}ro kernels for the standard Hermite expansions and then extend these operators for complex values of the parameters
involved. In this section we recall some basic results concerning the $L^p$ boundedness of Ces\`{a}ro means for the standard Hermite expansions and prove others concerning the extended operators.

%----------------------------------------------------------------------------
\subsection{Ces\`{a}ro and Bochner--Riesz means for the standard Hermite expansions}
\label{subsec:standardHermite-BR}
%----------------------------------------------------------------------------

As explained in the introduction, when $\K\equiv0$ the Dunkl--Hermite functions reduce to the standard Hermite functions $\Phi_{\mu}$ on $\R^d$. Ces\`{a}ro means of order $\dl\geq 0$ associated with the Hermite expansions (or just standard Ces\`{a}ro means) are then defined by
$$ \si_{N}^{\dl}f=\frac{1}{A_N^{\dl}}\sum_{j=0}^N A_{N-j}^{\dl}P_jf, $$
$P_jf$  being the corresponding projections. The operators $\si_{N}^{\dl}$ can be described as integrals operators with a kernel
$\sigma_N^{\dl}(x,y)$, which is explicitly given by
$$\si_N^{\dl}(x,y)=\frac{1}{A_N^{\dl}}\sum_{j=0}^N A_{N-j}^{\dl}\Phi_{j}(x,y),$$
where $\Phi_j(x,y)$ is the kernel of the $j$th projection associated with the Hermite
operator (see \cite[page 6]{ST}). For $|w|<1$, Mehler's formula for $\Phi_j(x,y)$ reads as
\begin{equation}
\label{eq:Mehler}\sum_{j=0}^\infty \Phi_{j}(x,y)w^j =
\pi^{-\frac{d}{2}} (1-w^2)^{-\frac{d}{2}}e^{-\frac{1}{2}
\big (\frac{1+w^2}{1-w^2} \big )(|x|^2+|y|^2)+\frac{2w}{1-w^2}x\cdot y}.
\end{equation}
From Mehler's formula it follows that $\Phi_{j}(rx',sy')$, and consequently $\sigma_N^{\dl}(rx',sy')$, is a function of $r,s $
and $ u:=x'\cdot y'$. Hence, sometimes we will write $\Phi_{j}(r,s;u)$ instead of $\Phi_{j}(rx',sy')$ and $ \sigma_{N}^{\delta}(r,s;u) $ instead of $ \sigma_{N}^{\dl}(rx',sy')$.

We also introduce the Bochner--Riesz means associated with the Hermite expansions as
$$ S_R^{\dl}f = \sum_{j=0}^{\infty} \bigg (1-\frac{2j+d}{R}\bigg )^{\dl}_{+}P_jf, $$
where $R>0$ and $(1-s)_+=\max\{1-s,0\}$. In the literature, the boundedness of both Ces\`{a}ro and Bochner--Riesz means have been studied. Their behaviour are similar in the sense that is possible to express the Ces\`{a}ro means $\si_{N}^{\dl}f$ in terms of $S_R^{\dl}f$ and vice-versa. Indeed, we have the following theorem due to J. J. Gergen \cite{JJG}, adapted to our context.
\begin{thm}[Gergen]
Let $ m $ be the integral part of $ \delta $. Then there exist two functions $U$ and $V$, $U(x)=O(x^{-2})$ as $x
\rightarrow \infty$, $U(x)=O(x^{m-\dl+1}) $ as $x \rightarrow 0 $, and $V(x)=O(x^{-2})$ as $x \rightarrow \infty $, $V(x)=O(x^{\dl}) $ as $x \rightarrow 0$, such that
$$S_{R}^{\dl}(x,y)=R^{-\dl}\sum_{k\leq R}V(R-k)A_k^{\dl}\si_{k}^{\dl}(x,y)$$ and
$$\si_N^{\dl}(x,y)=\frac{1}{A_N^{\dl}}\int_0^{N+1}U(N+1-t)t^{\dl}S_{t}^{\dl}(x,y)\,dt,\quad \hbox{for } N=0,1,\ldots$$
\end{thm}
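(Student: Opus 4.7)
My plan is to reduce the two kernel identities to scalar identities between the spectral multipliers of $\si_N^{\dl}$ and $S_R^{\dl}$, then construct the auxiliary functions $U,V$ via integral-transform techniques, and finally verify the claimed asymptotic bounds.

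Since both $\si_N^{\dl}$ and $S_R^{\dl}$ are Fourier multipliers for the Hermite spectral decomposition, their kernels decompose as
\[
\si_N^{\dl}(x,y) = \sum_{j=0}^{N}\frac{A_{N-j}^{\dl}}{A_N^{\dl}}\Phi_j(x,y),\qquad S_R^{\dl}(x,y) = \sum_{j\ge 0}\Big(1-\tfrac{2j+d}{R}\Big)_+^{\dl}\Phi_j(x,y).
\]
Testing both sides of each claimed identity against an eigenfunction of $H$ with eigenvalue $\lambda = 2j+d$ reduces the theorem to two numerical identities relating the discrete Cesàro weights $A_{k-j}^{\dl}$ to the continuous Riesz kernel $(t-\lambda)_+^{\dl}$, of the form
\[
R^{\dl}\Big(1-\tfrac{\lambda}{R}\Big)_+^{\dl} = \sum_{k=j}^{\lfloor R\rfloor}V(R-k)A_{k-j}^{\dl},\qquad A_{N-j}^{\dl} = \int_{\lambda}^{N+1}U(N+1-t)(t-\lambda)^{\dl}\,dt.
\]
These are precisely the classical Cesàro--Riesz equivalence identities underlying Gergen's theorem.

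To produce $U$ and $V$, I would combine the generating function $(1-w)^{-\dl-1}=\sum_{n\ge 0}A_n^{\dl}w^n$ with the Laplace transform $\int_0^\infty e^{-sR}(R-\lambda)_+^{\dl}\,dR = \Gm(\dl+1)\,s^{-\dl-1}e^{-s\lambda}$. Applying the Laplace transform in the appropriate variable to each of the scalar identities converts them into explicit algebraic relations for $\widetilde U(s)$ and $\widetilde V(s)$ involving elementary functions and Gamma factors. Inverting these transforms, or equivalently representing $U$ and $V$ as Mellin--Barnes contour integrals, yields the explicit formulas for the auxiliary kernels.

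The main obstacle is verifying the precise asymptotic bounds on $U$ and $V$. The decay $O(x^{-2})$ at infinity would follow from integrating by parts twice in the Mellin--Barnes representation, exploiting the rapid decay of the Gamma factors along vertical contours and the analyticity of the integrand. The near-zero bounds $U(x)=O(x^{m-\dl+1})$ with $m=\lfloor\dl\rfloor$ and $V(x)=O(x^{\dl})$ would come from applying Watson's lemma to the same contour integrals, where the condition $m\le\dl<m+1$ governs the precise order of vanishing. These delicate asymptotic estimates form the substance of Gergen's original theorem, which in the present context would simply be invoked from \cite{JJG} rather than rederived.
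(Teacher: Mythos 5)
The paper does not prove this theorem at all: it is imported verbatim (``adapted to our context'') from Gergen's 1937 paper \cite{JJG} and used as a black box, so there is no in-paper argument to compare yours against. Your reduction --- testing both kernel identities against the spectral projections $\Phi_j(x,y)$ to turn them into scalar identities between the Ces\`aro weights $A_{k-j}^{\delta}$ and the Riesz factors $(1-\lambda/R)_+^{\delta}$ --- is the correct formal content of the statement, and you rightly conclude that the actual substance (the existence of $U$ and $V$ with the stated two-sided asymptotics) is Gergen's classical result and should simply be cited. In that sense your proposal is consistent with what the authors actually do.

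Two caveats if you ever intend to carry the sketch through rather than cite. First, the Laplace/Mellin--Barnes construction of $U$ and $V$ and the claims ``integrating by parts twice gives $O(x^{-2})$'' and ``Watson's lemma gives $O(x^{m-\delta+1})$ and $O(x^{\delta})$'' are asserted, not verified; these estimates are precisely the hard part of Gergen's paper, so as written your argument proves nothing beyond the reduction step. Second, there is an indexing mismatch that both you and the paper's statement gloss over: the Riesz multiplier is a function of the eigenvalue $2j+d$, while the convolution $\sum_k V(R-k)A_{k-j}^{\delta}$ on the right-hand side depends only on $R-j$, so the scalar identity cannot hold literally as written; one must either rescale $R$ (replace it by $(R-d)/2$ or work with Riesz means of type $\lambda_j=j$) or invoke the version of Gergen's theorem for Riesz means of a general arithmetic-progression type. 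This is a fixable imprecision inherited from the paper, but it would surface immediately in any honest verification of the scalar identities.
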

In view of Gergen's theorem, we can readily prove a version of \cite[Theorem 3.3.3]{ST} (that states pointwise estimates for Bochner--Riesz) for Ces\`{a}ro means.
\begin{thm}\label{pointwiseCesaro}
Let $d\geq 2$ and $\dl >\frac{d-1}{2}$. Then for any $q\geq 2$ and $f\in L^q(\R^d)$ we have the pointwise inequality
$$\sup_{N}|\si_N^{\dl}f(x)|\leq C M_qf(x) $$
where $M_qf(x)=\big(M|f|^q\big)^{\frac{1}{q}}(x),$ $M$ being the Hardy--Littlewood maximal function.
\end{thm}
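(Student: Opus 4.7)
The strategy is to reduce the bound for the Ces\`aro maximal function to the corresponding bound for the Bochner--Riesz maximal function, exploiting the integral representation supplied by Gergen's theorem and the reference pointwise estimate \cite[Theorem 3.3.3]{ST}.

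First I would apply Gergen's theorem to obtain
$$\si_N^{\dl}f(x)=\frac{1}{A_N^{\dl}}\int_0^{N+1}U(N+1-t)\,t^{\dl}\,S_{t}^{\dl}f(x)\,dt,$$
so that, taking absolute values and pulling out the supremum of $S_t^{\dl}f(x)$ in $t$,
$$|\si_N^{\dl}f(x)|\le \bigg(\sup_{t>0}|S_t^{\dl}f(x)|\bigg)\cdot\frac{1}{A_N^{\dl}}\int_0^{N+1}|U(N+1-t)|\,t^{\dl}\,dt.$$
Since $A_N^{\dl}\sim N^{\dl}$, the whole matter reduces to showing that the integral on the right is $O(N^{\dl})$ uniformly in $N$, together with invoking \cite[Theorem 3.3.3]{ST}, which gives $\sup_{t>0}|S_t^{\dl}f(x)|\le C\,M_q f(x)$ under the assumed hypotheses $\dl>(d-1)/2$ and $q\ge 2$.

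The integral estimate I would carry out by splitting $[0,N+1]$ into $[0,N/2]$ and $[N/2,N+1]$. On $[0,N/2]$, the argument $N+1-t$ is large, so the decay $U(x)=O(x^{-2})$ as $x\to\infty$ yields $|U(N+1-t)|\le C N^{-2}$, and the trivial bound $t^{\dl}\le (N/2)^{\dl}$ gives a contribution of order $N^{\dl-1}$. On $[N/2,N+1]$, the factor $t^{\dl}$ is $O(N^{\dl})$ and a change of variable $s=N+1-t$ leaves $\int_0^{N/2+1}|U(s)|\,ds$, which is finite by the two given asymptotics: the behaviour $U(s)=O(s^{m-\dl+1})$ near $0$ is integrable (because $m-\dl+1>-1$), and the decay $U(s)=O(s^{-2})$ at infinity is integrable as well. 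Adding the two contributions yields the claimed $O(N^{\dl})$ bound.

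The only delicate point is making sure that the local behaviour of $U$ at $0$ is indeed integrable for every admissible $\dl$; this is automatic once one checks that $m-\dl+1>-1$, which holds since $m$ denotes the integer part of $\dl$. Everything else is bookkeeping: combining the two steps produces $|\si_N^{\dl}f(x)|\le C\sup_{t>0}|S_t^{\dl}f(x)|\le C\,M_q f(x)$, and passing to the supremum in $N$ completes the proof. The main obstacle, conceptually, is simply that one must trust Gergen's transfer identity and the a priori bounds on $U$; the analysis of $\si_N^{\dl}$ itself never enters directly, since the pointwise work has already been done for $S_R^{\dl}$ in \cite{ST}.
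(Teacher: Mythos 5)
Your proposal is correct and follows exactly the route the paper intends: the paper states that Theorem \ref{pointwiseCesaro} follows ``readily'' from Gergen's theorem together with the Bochner--Riesz pointwise bound of \cite[Theorem 3.3.3]{ST}, and your argument simply supplies the routine verification that $\frac{1}{A_N^{\dl}}\int_0^{N+1}|U(N+1-t)|\,t^{\dl}\,dt$ is bounded uniformly in $N$, which is the only detail the paper omits. The splitting at $t=N/2$ and the integrability check $m-\dl+1>-1$ are both sound.
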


 %----------------------------------------------------------------------------
\subsection{Extension of standard Ces\`{a}ro means to complex parameters}
\label{subsec:standardHermite-BR}
%----------------------------------------------------------------------------

 In order to prove Theorem \ref{main} we need to consider Ces\`{a}ro means $\si_{N}^{\dl}$ when $\dl$ is complex. Let us
define
\begin{equation}\label{zetaC}
\dl(\zeta):=\frac{d-1}{2}+\zeta \quad \quad \hbox{for }~\zeta \in \C
\end{equation}
and consider $\si_{N}^{\dl(\zeta)}f $ with $\Re(\dl(\zeta))\geq 0 $. Let us also recall the definition of a function of admissible growth: we say that a function $F(y)$, $y\in \R$, is of admissible growth if there exist constants $a<\pi$ and $b>0$ such that $|F(y)|\le e^{b e^{a|y|}}$. As a consequence of Theorem \ref{pointwiseCesaro} we get the following result.
\begin{thm}\label{maximal}
Let $d\geq 2$. Then for any $q\geq 2$ and $f\in L^q(\R^d)$ we have the pointwise inequality
\begin{equation}\label{ineq:1}
\sup_{N}\big |\si_{N}^{\dl(i\bt+\varepsilon)}f(x) \big |\leq C_{\varepsilon}(\bt)M_qf(x)
\end{equation}
for a fixed $\varepsilon >0$ and $\bt \in \R$. Here, the function $C_{\varepsilon}(\bt)$ is of admissible growth. Moreover, the operator $\displaystyle \sup_{N}\big |\si_{N}^{\dl(i\bt+\varepsilon)}f \big | $ is bounded on $L^p(\R^d)$ for any $p>2$.
\end{thm}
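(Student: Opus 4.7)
The plan is to deduce Theorem \ref{maximal} from Theorem \ref{pointwiseCesaro} by expressing $\si_N^{\dl(i\bt+\varepsilon)}$ as a superposition of real Cesàro means whose orders exceed $(d-1)/2$, and then controlling the coefficients of the superposition via Stirling asymptotics. Specifically, writing the generating series identity $(1-w)^{-(\alpha-\beta)}(1-w)^{-(\beta+1)}=(1-w)^{-(\alpha+1)}$ in power series gives
$$A_{j}^{\alpha}=\sum_{k+l=j} A_{k}^{\alpha-\beta-1}A_{l}^{\beta}\qquad (\alpha,\beta\in\C),$$
and substituting this into the definition of $\si_N^\alpha$ and rearranging yields the key decomposition
$$\si_{N}^{\alpha}f=\frac{1}{A_N^{\alpha}}\sum_{m=0}^{N} A_{N-m}^{\alpha-\beta-1}\,A_m^{\beta}\,\si_{m}^{\beta}f.$$
I would apply this with $\alpha=\dl(i\bt+\varepsilon)=\tfrac{d-1}{2}+\varepsilon+i\bt$ and with the real parameter $\beta=\dl(\varepsilon/2)=\tfrac{d-1}{2}+\varepsilon/2$, so that $\alpha-\beta-1=i\bt+\varepsilon/2-1$.

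Since $\beta>\tfrac{d-1}{2}$, Theorem \ref{pointwiseCesaro} furnishes the uniform bound $|\si_{m}^{\beta}f(x)|\le CM_qf(x)$ for every $m$, $q\ge 2$ and $f\in L^q(\R^d)$. Pulling this out and invoking the triangle inequality reduces the problem to showing that
$$\Lambda_{\varepsilon}(\bt):=\sup_{N}\frac{1}{|A_N^{\alpha}|}\sum_{m=0}^{N}\bigl|A_{N-m}^{i\bt+\varepsilon/2-1}\bigr|\,A_m^{\beta}$$
is of admissible growth in $\bt$. Using the standard asymptotics $A_j^{\gamma}\sim j^{\gamma}/\Gm(\gamma+1)$ (so $|A_N^{\alpha}|\sim N^{(d-1)/2+\varepsilon}/|\Gm(\alpha+1)|$, $A_m^{\beta}\sim m^{\beta}/\Gm(\beta+1)$, and $|A_{N-m}^{i\bt+\varepsilon/2-1}|\sim(N-m)^{\varepsilon/2-1}/|\Gm(i\bt+\varepsilon/2)|$) together with the Beta-type estimate $\sum_{m=0}^{N-1}(N-m)^{\varepsilon/2-1}m^{\beta}\le C_{\varepsilon}\,N^{\beta+\varepsilon/2}=C_\varepsilon N^{(d-1)/2+\varepsilon}$, one obtains
$$\Lambda_{\varepsilon}(\bt)\le C_{\varepsilon}\,\frac{|\Gm(\alpha+1)|}{|\Gm(i\bt+\varepsilon/2)|}.$$
The numerator decays like $|\bt|^{d/2+\varepsilon}e^{-\pi|\bt|/2}$ while $|\Gm(i\bt+\varepsilon/2)|^{-1}$ grows at most like $|\bt|^{1/2-\varepsilon/2}e^{\pi|\bt|/2}$ as $|\bt|\to\infty$, so the ratio is polynomially bounded in $|\bt|$, which is more than enough to be of admissible growth. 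This proves \eqref{ineq:1}.

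For the final assertion, I would simply combine the pointwise bound just established with the $L^{p/q}$-boundedness of the Hardy--Littlewood maximal operator: $\|M_qf\|_{L^p(\R^d)}\le C\|f\|_{L^p(\R^d)}$ holds whenever $p>q$, so taking $q=2$ gives the boundedness of $\sup_N|\si_N^{\dl(i\bt+\varepsilon)}f|$ on $L^p(\R^d)$ for every $p>2$. The main technical point is the Gamma-function bookkeeping: one must be careful that the singularity of $(N-m)^{\varepsilon/2-1}$ at $m=N$ is integrable (which requires only $\varepsilon>0$), and that the asymptotic relations for the complex binomial coefficients are in fact uniform in $\bt$ on compact subsets and have the claimed exponential profile in $|\bt|$; once these standard facts are in hand the argument is essentially a direct computation.
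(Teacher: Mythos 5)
Your proposal is correct and follows essentially the same route as the paper: the identical decomposition $\si_{N}^{\dl(i\bt+\ve)}f=\frac{1}{A_N^{\dl(i\bt+\ve)}}\sum_{j=0}^N A_{N-j}^{i\bt+\ve/2-1}A_j^{\dl(\ve/2)}\si_j^{\dl(\ve/2)}f$ obtained from the generating-function (Vandermonde) identity for the binomial coefficients, followed by the pointwise bound of Theorem \ref{pointwiseCesaro} for $\si_j^{\dl(\ve/2)}$ and the $L^p$-boundedness of $M_q$ for $p>q\ge 2$. The only divergence is in estimating the coefficient sum: you use Stirling asymptotics and a Beta-type convolution bound, arriving at a polynomially growing constant, whereas the paper's Lemma \ref{lem:bin.est} uses exact Gamma- and Beta-function identities to obtain $C_\ve(1+|\bt|)\cosh(\pi\bt)$ --- both are of admissible growth (your uniformity caveat for the complex binomial coefficients is the same issue the paper resolves with the Beta-integral estimate), so the argument goes through.
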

\begin{proof}
 From the definition of $\si_{N}^{\dl}$ it follows that
 \begin{equation}
 \label{complexReal}
 \si_{N}^{\dl(i\bt+\varepsilon)}f(x)=\frac{1}{A_N^{\dl(i\bt+\varepsilon)}}
 \sum_{j=0}^N A_{N-j}^{i\bt+\frac{\varepsilon}{2}-1}A_j^{\dl(\varepsilon/2)}
 \si_j^{\dl(\varepsilon/2)}f(x).
 \end{equation}
 Indeed, the right hand side in the identity above is
$$ \frac{1}{A_N^{\dl(i\bt+\varepsilon)}} \sum_{j=0}^N A_{N-j}^{i\bt+\frac{\varepsilon}{2}-1}
\sum_{k=0}^jA_{j-k}^{\delta(\varepsilon/2)}P_kf(x)=\frac{1}{A_N^{\dl(i\bt+\varepsilon)}}
\sum_{k=0}^NP_kf(x) \sum_{j=k}^N A_{N-j}^{i\bt+\frac{\varepsilon}{2}-1}A_{j-k}^{\delta(\varepsilon/2)},$$
so \eqref{complexReal} will be proved once we check that
$$A_{N-k}^{\dl(i\bt+\varepsilon)}=\sum_{j=k}^N A_{N-j}^{i\bt+\frac{\varepsilon}{2}-1}A_{j-k}^{\delta(\varepsilon/2)},$$
or equivalently
\begin{equation}\label{generating}
A_{N}^{\dl(i\bt+\varepsilon)}=\sum_{j=0}^N A_{N-j}^{i\bt+\frac{\varepsilon}{2}-1}A_{j}^{\delta(\varepsilon/2)}.
\end{equation}
Recall the following basic facts concerning generating functions. Since
$(1-w)^{-r-1}=\sum_{k\ge0}\binom{r+k}{k}w^k$ and $(1-w)^{-s-1}=\sum_{k\ge0}\binom{s+k}{k}w^k$, if we multiply these together, we get $(1-w)^{-(r+s+1)-1}=(1-w)^{-r-1}(1-w)^{-s-1}$. Equating coefficients gives us
\begin{equation}
\label{binomIdent}
\binom{r+s+1+n}{n}=\sum_{k=0}^n\binom{r+n-k}{n-k}\binom{s+k}{k}.
\end{equation}
With this, we see that \eqref{generating} is true.

Also, by Lemma \ref{lem:bin.est}, we have that
 $$ \sum_{j=0}^N\big |A_{N-j}^{i\bt+\frac{\varepsilon}{2}-1} \big | A_j^{\dl(\varepsilon/2)}
 \leq C_{\varepsilon}(\bt)\big |A_N^{\dl(i\bt+\varepsilon)} \big |. $$

Now, since $\dl(\varepsilon/2)>\frac{d-1}{2}$, it follows from Theorem \ref{pointwiseCesaro} that $\displaystyle \sup_{j}\big|\si_{j}^{\dl(\varepsilon/2)}f(x)\big|\leq C M_qf(x)$, so \eqref{ineq:1} is proved.

Finally, the last statement of Theorem \ref{maximal} follows from the fact that the maximal function $M_q$ is bounded on $L^p(\R^d)$ for any $p>q\geq 2 $.
\end{proof}

%----------------------------------------------------------------------------
\subsection{A vector-valued inequality for an operator connected to standard Ces\`aro means}
\label{subsec:standardHermite-BR}
%----------------------------------------------------------------------------

Further, we introduce one more operator $\mathbb{S}_N^{\dl} $ related to $\si_N^{\dl}$ as follows. For any $f\in L^p(\R^+, dr) $ we define
\begin{equation}\label{eq:supremeOp}
\mathbb{S}_N^{\dl} f(r):=r^{\frac{d-1}{p}}\int_{0}^{\infty}s^{\frac{d-1}{p'}}
\Big (\int_{-1}^1 \big|\sigma_N^{\dl}(r,s;u) \big|(1-u^2)^{\frac{d-3}{2}}\,du\Big )f(s)ds.
\end{equation}
We will require a vector-valued inequality (Theorem \ref{Th:MVvine}) for the maximal function associated with $\mathbb{S}_N^{\dl} $. In order to get this, recall the following result about vector-valued extensions for general bounded operators by J. L. Krivine (see \cite{Kr} or \cite[Thm.\ 1.f.14]{LinTza}).

\begin{thm}[Krivine]\label{Kr}
Let $X$ and $Y$ be two Banach lattices and let $T:X\mapsto Y$ be a bounded linear operator. Then, for every choice of $\{x_i\}_{i=1}^n$ in $X$, we have
$$\bigg\|\Big(\sum_{i=1}^n|Tx_i|^2\Big)^{1/2}\bigg\|\le K_G\|T\|\bigg\|\Big(\sum_{i=1}^n|x_i|^2\Big)^{1/2}\bigg\|,$$
where $K_G$ is the universal Grothendieck constant.
\end{thm}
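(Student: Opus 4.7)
The plan is to reduce the abstract Banach-lattice statement to the bilinear-form version of Grothendieck's inequality, which is the source of the universal constant $K_G$.

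First, I would give precise meaning to the expression $\bigl(\sum_i |x_i|^2\bigr)^{1/2}$ in an abstract Banach lattice via Krivine's functional calculus: any positively homogeneous continuous function $\Phi\colon\R^n\to\R$ of degree one determines a well-defined element $\Phi(x_1,\ldots,x_n)\in X$ through uniform approximation of $\Phi$ on the unit sphere by piecewise-linear homogeneous expressions built from sums, scalar multiplications, and lattice operations. Applied to $\Phi(t)=\bigl(\sum t_i^2\bigr)^{1/2}$, this produces the required square-function in $X$, consistent with the pointwise expression whenever $X$ is a concrete $C(K)$- or $L^p$-space. From this calculus one also obtains the duality identity
$$\Bigl\|\Bigl(\sum_i|y_i|^2\Bigr)^{1/2}\Bigr\|_Y=\sup\Bigl\{\textstyle\sum_i\La y_i,y_i^*\Ra:\Bigl\|\bigl(\sum_i|y_i^*|^2\bigr)^{1/2}\Bigr\|_{Y^*}\le1\Bigr\},$$
so taking $y_i=Tx_i$ reduces the theorem to the scalar bilinear bound
$$\Bigl|\textstyle\sum_i\La Tx_i,y_i^*\Ra\Bigr|\le K_G\|T\|\Bigl\|\bigl(\sum_i|x_i|^2\bigr)^{1/2}\Bigr\|_X\Bigl\|\bigl(\sum_i|y_i^*|^2\bigr)^{1/2}\Bigr\|_{Y^*}.$$

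Second, and this is the heart of the argument, I would invoke the bilinear-form version of Grothendieck's inequality. Using Kakutani representations of the finite-dimensional sublattices of $X$ generated by $\set{x_i}$ and of $Y^*$ generated by $\set{y_i^*}$ as sublattices of suitable $C(K_X)$ and $C(K_{Y^*})$, the square-function norms become the sup-norms of pointwise $\ell^2$-sums. The bilinear form $B(x,y^*):=\La Tx,y^*\Ra$ on $X\times Y^*$ has norm at most $\|T\|$, and descends under these Kakutani embeddings to a bilinear form on $C(K_X)\times C(K_{Y^*})$ of the same norm. Grothendieck's inequality in this setting asserts that for any bounded bilinear form $\widetilde B$ on $C(K)\times C(L)$ and any $f_i\in C(K)$, $g_i\in C(L)$,
$$\Bigl|\textstyle\sum_i\widetilde B(f_i,g_i)\Bigr|\le K_G\|\widetilde B\|\Bigl\|\bigl(\sum_i|f_i|^2\bigr)^{1/2}\Bigr\|_\infty\Bigl\|\bigl(\sum_i|g_i|^2\bigr)^{1/2}\Bigr\|_\infty,$$
and applying this to the descended form and the Kakutani images of the $x_i$ and $y_i^*$ yields exactly the required scalar estimate.

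The main obstacle is to carry out the Kakutani-type reduction in such a way that the abstract square-function norms are identified with pointwise $\ell^2$-sup norms while the operator-norm bound on the bilinear form associated to $T$ is preserved. Once that reduction is in place, the universal constant $K_G$ emerges as the content of Grothendieck's inequality, which is the deep classical ingredient of the argument.
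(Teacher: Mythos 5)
The paper does not actually prove this theorem: it is quoted as a known result of Krivine, with the references \cite{Kr} and \cite[Thm.\ 1.f.14]{LinTza}, so there is no in-paper argument to compare against. Your outline is essentially the standard proof given in the cited Lindenstrauss--Tzafriri reference: Krivine's functional calculus to give meaning to $\big(\sum_i|x_i|^2\big)^{1/2}$, the duality identity to reduce to a scalar bilinear estimate, a Kakutani-type representation to transport everything to $C(K)$-spaces, and the bilinear form of Grothendieck's inequality to produce the constant $K_G$. That is the right route and, at the level of a sketch, it is correct.

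Two points where your wording should be tightened to make the reduction actually work. First, the Kakutani step should be applied not to ``the finite-dimensional sublattice generated by $\set{x_i}$'' (which need not be an abstract M-space) but to the principal ideal generated by $u=\big(\sum_i|x_i|^2\big)^{1/2}$, equipped with the order-unit norm whose unit ball is $[-u,u]$; this \emph{is} an abstract M-space, its representation sends $u$ to the constant function $\mathbf{1}$ so that the images $f_i$ satisfy $\big(\sum_i|f_i|^2\big)^{1/2}\le\mathbf{1}$ pointwise, and the missing factor $\big\|\big(\sum_i|x_i|^2\big)^{1/2}\big\|_X$ enters as the norm of the inclusion $C(K)\hookrightarrow X$ (similarly on the $Y^*$ side). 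Second, the duality identity you invoke is itself a nontrivial consequence of the functional calculus (it is part of the same circle of results in Lindenstrauss--Tzafriri, Section 1.d) and should be cited or proved rather than folded silently into the calculus; the direction needed here, namely that the supremum dominates the norm of the square function, is the one requiring work. With those two adjustments your argument is the complete classical proof.
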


\begin{thm}\label{Th:MVvine}
 Let $d\geq 2$ and for $\zeta\in \C$, let $\dl(\zeta)$ be defined as in \eqref{zetaC}. Then for any $p>2$ we have the
 vector-valued inequality
$$ \bigg(\int_0^{\infty}\Big (\sum_{m=0}^{\infty}\sum_{j=1}^{d(m)}\big( \sup_{N}\big|\mathbb{S}_N^{\dl(i\beta+\varepsilon)}
f_{m,j}(r) \big|\big)^2 \Big )^{\frac{p}{2}}\,dr\bigg)^{\frac{1}{p}}
\leq C(\bt) \bigg(\int_0^{\infty}\Big (\sum_{m=0}^{\infty}\sum_{j=1}^{d(m)}\big|f_{m,j}(r) \big|^2 \Big )^{\frac{p}{2}}\,dr\bigg)^{\frac{1}{p}} $$
for any sequence of functions $f_{m,j}\in L^p(\R^+, dr) $ for which the right hand side is finite. Moreover, $C(\bt)$ is of admissible growth.
\end{thm}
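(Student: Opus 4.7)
The plan is to linearize the supremum over $N$ component-wise, exploit the positivity of the kernels of $\mathbb{S}_N^{\dl(i\bt+\eps)}$ through Minkowski's integral inequality, and reduce the vector-valued maximal estimate to a scalar $L^p(\R^+,dr)$-bound for a single positive linear integral operator whose kernel uniformly dominates $K_N(r,s)$. The boundedness of this latter operator will be derived from Theorem \ref{maximal}, after transferring to the radial picture on $\R^d$ and invoking boundedness of $M_q$ on $L^p(\R^d)$.

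\textbf{Linearization and reduction.} For each $r>0$ and each $(m,j)$, choose a measurable function $N_{m,j}(r)\in\Na$ satisfying $|\mathbb{S}_{N_{m,j}(r)}^{\dl(i\bt+\eps)}f_{m,j}(r)|\ge\tfrac12\sup_N|\mathbb{S}_N^{\dl(i\bt+\eps)}f_{m,j}(r)|$. Writing $K_N(r,s)\ge0$ for the non-negative kernel of $\mathbb{S}_N^{\dl(i\bt+\eps)}$, define the linear integral operator $L_{m,j}g(r):=\int_0^\infty K_{N_{m,j}(r)}(r,s)\,g(s)\,ds$. The left-hand side of Theorem \ref{Th:MVvine} is then at most $2\|(\sum_{m,j}|L_{m,j}f_{m,j}|^2)^{1/2}\|_{L^p(\R^+,dr)}$. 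At each $r$, Minkowski's integral inequality applied with outer $\ell^2((m,j))$-norm, followed by the pointwise majorization $K_{N_{m,j}(r)}(r,s)\le K^*(r,s):=\sup_N K_N(r,s)$, yields
$$\Big(\sum_{m,j}|L_{m,j}f_{m,j}(r)|^2\Big)^{1/2}\le\int_0^\infty K^*(r,s)\,F(s)\,ds=:\mathcal{K}^*F(r),\quad F(s):=\Big(\sum_{m,j}|f_{m,j}(s)|^2\Big)^{1/2}.$$
This reduces the $\ell^2$-vector-valued problem to the scalar $L^p$-boundedness of the single positive linear operator $\mathcal{K}^*$; alternatively, one can apply Krivine's Theorem~\ref{Kr} directly to $\mathcal{K}^*$ to package this passage.

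\textbf{Bounding $\mathcal{K}^*$ and main difficulty.} From the identity \eqref{complexReal} and Lemma \ref{lem:bin.est}, one obtains the uniform-in-$N$ pointwise majorization
$$|\sigma_N^{\dl(i\bt+\eps)}(r,s;u)|\le C_\eps(\bt)\,\sup_j|\sigma_j^{\dl(\eps/2)}(r,s;u)|.$$
Since the proof of Theorem \ref{pointwiseCesaro} (cf.\ \cite[Thm.\ 3.3.3]{ST}) is actually carried out through a pointwise majorization of the kernels $|\sigma_j^{\dl(\eps/2)}(x,y)|$, this propagates to a uniform bound on $K^*(r,s)$. Via the correspondence $f(s)\leftrightarrow F(y):=f(|y|)/|y|^{(d-1)/p}$ between $L^p(\R^+,dr)$ and radial functions on $L^p(\R^d)$, one arrives at the pointwise estimate $\mathcal{K}^*F(r)\le C(\bt)\,r^{(d-1)/p}M_q\widetilde F(r)$ with $\widetilde F$ the radial function on $\R^d$ corresponding to $F$. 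Boundedness of $M_q$ on $L^p(\R^d)$ for $p>q\ge2$ and the change of variables then give $\|\mathcal{K}^*F\|_{L^p(\R^+,dr)}\le C(\bt)\|F\|_{L^p(\R^+,dr)}$ with the admissible growth in $\bt$ inherited from Lemma \ref{lem:bin.est}. The main obstacle is the extraction of the uniform-in-$N$ pointwise kernel majorant: Theorem \ref{maximal} alone provides only a sublinear bound on functions, and one must revisit the proof of the underlying real-parameter estimate (Theorem \ref{pointwiseCesaro}) to confirm that it rests on a pointwise kernel inequality, then propagate it via \eqref{complexReal} and Lemma \ref{lem:bin.est}.
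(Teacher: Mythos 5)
Your reduction breaks at the point where you pull the supremum over $N$ inside the $s$-integral. After linearizing and applying Minkowski you are left with the single positive operator whose kernel is $K^*(r,s)=\sup_N K_N(r,s)$, with $K_N(r,s)=r^{\frac{d-1}{p}}s^{\frac{d-1}{p'}}\int_{-1}^1|\sigma_N^{\delta}(r,s;u)|(1-u^2)^{\frac{d-3}{2}}\,du$. This kernel is not locally integrable in $s$ near $s=r$, so $\mathcal{K}^*F(r)=+\infty$ whenever $F$ is bounded below on a neighbourhood of $r$. Indeed, the pointwise kernel estimates underlying Theorem \ref{pointwiseCesaro} are approximate-identity bounds of the shape $|\sigma_N^{\delta}(x,y)|\lesssim R^{d/2}(1+R^{1/2}|x-y|)^{-n}$ with $R=2N+d$: the on-diagonal size $R^{d/2}$ tends to infinity, and after integrating in $u$ one gets $K_N(r,s)\gtrsim R^{1/2}$ for $|r-s|\lesssim R^{-1/2}$, hence $K^*(r,s)\gtrsim |r-s|^{-1}$ near the diagonal. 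The uniform information one can extract from those estimates is $\sup_N\int K_N(r,s)F(s)\,ds\le C\,M_qF$-type control, i.e.\ a bound uniform over the family of operators, not a single dominating integrable kernel; interchanging $\sup_N$ with $\int ds$ (equivalently, letting the linearizing choice of $N$ depend on $s$) destroys exactly the localization at scale $R^{-1/2}$ that makes the maximal estimate true. Your side remark that Theorem \ref{maximal} must be read at the kernel level (since $\mathbb{S}_N^{\delta}$ carries $|\sigma_N^{\delta}|$ in its kernel) is a fair and useful observation --- the paper uses this implicitly --- but it does not repair the main step.

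The paper never forms $\sup_N K_N$: it views $f\mapsto\big(\mathbb{S}_N^{\dl(i\beta+\varepsilon)}f\big)_N$ as one linear operator from $L^p(\R^+,dr)$ into the Banach lattice $L^p\big(\R^+,\ell^{\infty}(\Na)\big)$, whose operator norm is precisely the $L^p$ norm of the maximal function $\sup_N|\mathbb{S}_N^{\dl(i\beta+\varepsilon)}f|$; this scalar maximal bound is obtained by the same radial transference you describe ($F(y)=|y|^{-(d-1)/p}f(|y|)$) together with Theorem \ref{maximal}, and then Krivine's Theorem \ref{Kr} delivers the $\ell^2$-valued extension in one stroke. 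If you wish to keep your linearization, you must retain the $s$-independence of the chosen index $N_{m,j}(r)$ all the way through and never replace $K_{N_{m,j}(r)}(r,s)$ by $\sup_N K_N(r,s)$ --- which is, in effect, what the lattice formulation accomplishes for you.
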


\begin{proof}
Observe that we can consider $\displaystyle \sup_{N}\big|\mathbb{S}_N^{\dl(i\beta+\varepsilon)}f\big|$ as a linear operator
mapping $L^p(\R^+)$ into $L^p\big(\R^+,\, l^{\infty}(\Na)\big)$. Then, appealing to Theorem \ref{Kr}, it is enough to show that this maximal operator is bounded on $L^p(\R^+, dr)$.

Consider the radial function $F$ defined by $F(x)=|x|^{-\frac{d-1}{p}}f(|x|)$ which belongs to $L^p(\R^d)$ as $f \in L^p(\R^+, dr)$. In terms of $F$ we can consider $\mathbb{S}_N^{\dl(i\beta+\varepsilon)}f$ as a radial function on $\R^d$ given by
$$ |x|^{\frac{d-1}{p}}\int_{\R^d}\big|\si_N^{\dl(i\beta+\varepsilon)}(x, y) \big|F(y)dy. $$
Thus the boundedness of $\sup_{N}\big| \mathbb{S}_N^{\dl(i\beta+\varepsilon)}f\big| $ on $L^p(\R^+, dr)$ follows from the fact that the maximal operator
$$  \sup_{N}\bigg(\int_{\R^d}\Big |\si_{N}^{\dl(i\beta+\varepsilon)}(x,y) \Big |\,|F(y)| \,dy \bigg)$$
is bounded on $L^p(\R^d)$, which is true by Theorem \ref{maximal}. The theorem is proved.
\end{proof}

%-----------------------------------------------------------------------------------
\section{Ces\`{a}ro means for the Dunkl--Hermite and extension to complex parameters}
\label{sec:cesaroDunkl}
%-----------------------------------------------------------------------------------

In this section we establish various results in several steps:
\begin{enumerate}
\item First, we recall the Mehler's formula in the Dunkl--Hermite setting, analogous to the Mehler's formula for the standard Hermite functions \eqref{eq:Mehler}.
\item In the second step, we are able to write the Ces\`{a}ro kernels for Dunkl--Hermite expansions in terms of Ces\`{a}ro kernels for Hermite expansions with shifted parameters. As a result, by using spherical coordinates, we express the Dunkl--Ces\`{a}ro means as an expansion in $h$-harmonics of linear operators defined in terms of standard Ces\`{a}ro means, see Corollary \ref{kernelexp}.
\item In turn, we show in Proposition \ref{prop:relKernels} that the standard shifted Ces\`{a}ro kernels are connected to Ces\`{a}ro kernels for Laguerre expansions.
\item Finally, we need to extend this connection to complex values of the parameters in Proposition \ref{prop:Compkernel}.
\end{enumerate}

These facts are shown in this order in the subsequent subsections. They will lead us to the proof of Theorem \ref {Th:Vvine} and, as a consequence, of Theorem \ref{main}. Since the proofs of Propositions \ref{lem:hProj}, \ref{prop:relKernels} and \ref{prop:Compkernel} are rather technical, we state the results here and we give their proofs in Section \ref{sec:propositions} separately.
%we will state the results here, and the proofs are moved to a separate Section \ref{sec:propositions}.
%-----------------------------------------------------------------------------------
\subsection{Mehler's formula for Dunkl--Hermite functions}
%-----------------------------------------------------------------------------------

As stated in the introduction, generalized Hermite polynomials and generalized
Hermite functions associated with Coxeter groups were studied in \cite{RO}, where the precise definitions can be found. It is known that the Dunkl-Hermite functions $\Phi_{\mu, \K}$ are eigenfunctions of Dunkl-Hermite operator $\Dk$ with eigenvalues $(2|\mu|+2\gm+d)$. For our purposes, the most important result is the generating function identity or the \textit{Mehler's formula for the Dunkl--Hermite functions}.   For $ |w| < 1 $, one has (see \cite[Theorem 3.12]{RO})
\begin{equation}\label{eq:MehlerGen1}
\sum_{\mu \in \mathbb{N}^d}\Phi_{\mu,\K}(x)\Phi_{\mu,\K}(y)w^{|\mu |} =\frac{2}{\om_d^{\K}\,\Gm\big (\frac{d}{2}+\gm\big )}(1-w^2)^{-\frac{d}{2}-\gm}e^{-\frac{1}{2}\big (\frac{1+w^2}{1-w^2}\big )  (|x|^2+|y|^2)}E_{\K}\bigg (\frac{2wx}{1-w^2}, y \bigg ).
\end{equation}

\subsection{Ces\`{a}ro kernels for Dunkl--Hermite expansions}
In this subsection we obtain an expression for the Ces\`{a}ro kernel for the Dunkl--Hermite expansions in terms of the Ces\`{a}ro
kernel for the standard Hermite expansions, via the Funk--Hecke formula stated in Corollary \ref{eq:Hecke}.

Let $P_{j,\K}$ be the orthogonal projection described in \eqref{eq:L2expn}. Then $P_{j,\K}$ is given by the kernel operator
\begin{equation}\label{eq:projection}
 P_{j,\K}f(x)= \int_{\R^d}\Phi_{j,\K}(x,y)f(y)h_{\K}^2(y)dy
\end{equation}
where
\begin{equation*}
% \label{projDunkl}
\Phi_{j,\K}(x, y)= \sum_{|\mu|= j}\Phi_{\mu,\K}(x)\Phi_{\mu,\K}(y).
\end{equation*}

For $\delta \geq 0$, it is clear from \eqref{eq:DunklCesaro} and \eqref{eq:projection} that the kernel of the Ces\`{a}ro means $ \sigma_{N,\K}^{\delta} $ is given by
\begin{equation}\label{eq:CDHKernel}
\si_{N,\K}^{\dl}(x,y):=\frac{1}{A_N^{\dl}}\sum_{j=0}^NA_{N-j}^{\dl}\Phi_{j,\K}(x,y).
\end{equation}

Proposition \ref{lem:hProj} below contains expressions for the $h$-harmonic coefficients of Dunkl--Ces\`{a}ro kernels in terms of $d$-dimensional and $(d+1)$-dimensional standard Ces\`{a}ro kernels. As remarked earlier,  in view of the Mehler's formula \eqref{eq:Mehler} it follows that $\Phi_{j}(rx',sy')$ is a function of $r, s$ and $u:= x' \cdot y'$. The same is true for the Ces\`{a}ro kernels and so again we will sometimes write $\Phi_{j}(r,s; u)$ instead of $ \Phi_{j}(rx',sy')$ and $\sigma_{N}^{\dl}(r,s; u)$ instead of $ \sigma_{N}^{\dl}(rx',sy')$. Let $P_m^{\lambda} $ stand for the normalized ultraspherical polynomials of type $\lambda>-\tfrac12$ and degree $m$. We have the following.

\begin{prop}[Funk--Hecke for Ces\`{a}ro--Dunkl--Hermite]\label{lem:hProj}
For $\gm$ defined as in \eqref{gamma}, let
\begin{equation*}
%\label{constant}
c_{d,\gamma} = \frac{2 \pi^{(d-1)/2}}{\Gamma(\frac{d-1}{2}+\gamma)}
\end{equation*}
and $ \lambda = \tfrac{d}{2}+\gamma-1 $.
Then, for any spherical h-harmonic $ Y_{m,\ell}$ of degree $m$, we have the identity
\begin{multline}\label{HBCDHd}
\int_{\sd}\si_{N,\K}^{\dl,d}(rx', sy')Y_{m,\ell}(y') h_{\K}^2(y')\,d\si(y') \\
=\frac{c_{d,\gm}}{A_N^{\dl}}\sum_{j=0}^{[N/2]}A_{j}^{\gm-1}
A_{N-2j}^{\delta}\left(\int_{-1}^{1}\si_{N-2j}^{\dl,d} (r,s; u)P_m^{\la}(u)(1-u^2)^{\la-1/2}\,du \right)Y_{m,\ell}(x').
\end{multline}
Moreover, we also have
\begin{multline}\label{HBCDHdp1}
\int_{\sd}\si_{N,\K}^{\dl,d}(rx', sy')Y_{m,\ell}(y') h_{\K}^2(y')\,d\si(y') \\= \frac{c_{d+1,\gm-\frac{1}{2}}}{A_N^{\dl}}\sum_{j=0}^{[N/2]}A_{j}^{\gm-\frac{3}{2}}A_{N-2j}^{\delta}\left(\int_{-1}^{1}\si_{N-2j}^{\dl, d+1}
(r,s; u)P_m^{\la}(u)(1-u^2)^{\la-1/2}\,du \right)Y_{m,\ell}(x').
\end{multline}
\end{prop}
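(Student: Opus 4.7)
The plan is to pass through generating functions. Multiplying \eqref{eq:MehlerGen1} by $(1-w)^{-\dl-1}=\sum_{N\ge0}A_N^{\dl}w^N$ and using the definition \eqref{eq:CDHKernel} of $\si_{N,\K}^{\dl}$, I would obtain
\[
\sum_{N=0}^{\infty}A_N^{\dl}\,\si_{N,\K}^{\dl,d}(rx',sy')\,w^N
=\frac{2(1-w)^{-\dl-1}(1-w^2)^{-d/2-\gm}}{\om_d^{\K}\,\Gm(\tfrac{d}{2}+\gm)}\,
e^{-\tfrac12\frac{1+w^2}{1-w^2}(r^2+s^2)}\,
E_{\K}\!\Bigl(\tfrac{2wrx'}{1-w^2},\,sy'\Bigr).
\]
The only $y'$-dependence sits in the Dunkl kernel, so integrating against $Y_{m,\ell}(y')h_{\K}^2(y')\,d\si(y')$ and applying Corollary \ref{eq:Hecke} with first argument $\tfrac{2wr}{1-w^2}x'$ converts that factor into $Y_{m,\ell}(x')$ times $\int_{-1}^1 \exp\!\bigl(\tfrac{2wrsu}{1-w^2}\bigr)P_m^{\la}(u)(1-u^2)^{\la-1/2}\,du$, and cancels the factor $\om_d^{\K}\,\Gm(\tfrac{d}{2}+\gm)$ in the denominator.

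Next I would recognize that Mehler's formula \eqref{eq:Mehler} in dimension $d$ gives
\[
\sum_{N=0}^{\infty}A_N^{\dl}\,\si_{N}^{\dl,d}(r,s;u)\,w^N
=(1-w)^{-\dl-1}\pi^{-d/2}(1-w^2)^{-d/2}\,
e^{-\tfrac12\frac{1+w^2}{1-w^2}(r^2+s^2)+\frac{2wrsu}{1-w^2}}.
\]
For the first identity \eqref{HBCDHd}, I would split $(1-w^2)^{-d/2-\gm}=(1-w^2)^{-d/2}\,(1-w^2)^{-\gm}$; the first factor combines with $\pi^{-d/2}$ and the exponential to reproduce exactly the standard Cesàro generating function above, so the whole expression on the sphere becomes
\[
c_{d,\gm}\,(1-w^2)^{-\gm}\int_{-1}^{1}\Bigl(\sum_{N\ge 0}A_N^{\dl}\si_{N}^{\dl,d}(r,s;u)w^N\Bigr)P_m^{\la}(u)(1-u^2)^{\la-1/2}\,du\;Y_{m,\ell}(x'),
\]
after checking that the surviving numerical constant is $\tfrac{2\pi^{(d-1)/2}}{\Gm(\tfrac{d-1}{2}+\gm)}=c_{d,\gm}$. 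Now use $(1-w^2)^{-\gm}=\sum_{j\ge0}A_j^{\gm-1}w^{2j}$ and equate the coefficient of $w^N$ on both sides to recover \eqref{HBCDHd}.

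For \eqref{HBCDHdp1} I would perform the alternative split $(1-w^2)^{-d/2-\gm}=(1-w^2)^{-(d+1)/2}(1-w^2)^{-(\gm-1/2)}$. The first factor together with $\pi^{-(d+1)/2}$ and the exponential now matches the $(d+1)$-dimensional standard Cesàro generating function $\sum_N A_N^{\dl}\si_N^{\dl,d+1}(r,s;u)w^N$. The residual constant simplifies to $\tfrac{2\pi^{d/2}}{\Gm(\tfrac{d-1}{2}+\gm)}$, which equals $c_{d+1,\gm-1/2}$ since $\tfrac{d}{2}+\gm-\tfrac12=\tfrac{d-1}{2}+\gm$. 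Expanding $(1-w^2)^{-(\gm-1/2)}=\sum_{j\ge0}A_j^{\gm-3/2}w^{2j}$ and comparing $w^N$-coefficients gives \eqref{HBCDHdp1}.

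The only non-routine points I foresee are bookkeeping: tracking the $\pi$-powers and $\Gm$-factors so that $c_{d,\gm}$ (resp.\ $c_{d+1,\gm-1/2}$) falls out cleanly, and justifying termwise identification of power series (valid for $|w|<1$ since all series converge absolutely for fixed $r,s$, $u\in[-1,1]$ by Mehler's formula and the binomial series). Swapping the sphere integral with the power-series sum in $w$ is legitimate for $|w|<1$ by absolute convergence, and swapping with the $[-1,1]$ integral against $P_m^{\la}$ is likewise immediate since the integrand is jointly continuous. No analytic subtlety beyond these manipulations is needed.
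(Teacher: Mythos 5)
Your proposal is correct and follows essentially the same route as the paper: integrate the Dunkl--Hermite Mehler formula against $Y_{m,\ell}$ via the Funk--Hecke corollary, match the result with the standard Hermite Mehler formula (splitting $(1-w^2)^{-d/2-\gm}$ as $(1-w^2)^{-d/2}(1-w^2)^{-\gm}$ or as $(1-w^2)^{-(d+1)/2}(1-w^2)^{-(\gm-1/2)}$), multiply by $(1-w)^{-\dl-1}$, and equate coefficients of $w^N$. The only cosmetic difference is that you multiply by $(1-w)^{-\dl-1}$ before integrating over the sphere rather than after, which changes nothing.
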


By expanding $\sigma_{N,\K}^{\dl}f(x)$ in terms of $h$-harmonics and using Proposition \ref{lem:hProj} we can easily deduce the following.

\begin{cor}\label{kernelexp}
For any Schwartz class function $f$ we have
$$\si_{N,\K}^{\dl}f(rx')=\sum_{m=0}^N \sum_{j=1}^{d(m)}T_{N,m}^{\dl,\gm}
f_{m,j}(r) Y_{m,j}(x'),$$
where $T_{N,m}^{\dl,\gm}$ are the operators defined in \eqref{eq:alterCesaro} with kernels $K_{N,m}^{\dl,\gm}$ that can be expressed either as
\begin{equation}\label{eq:alterKernel}
K_{N,m}^{\dl,\gm}(r, s):=\frac{c_{d,\gm}}{A_N^{\dl}}\sum_{j=0}^{[N/2]}A_{j}^{\gm-1}
A_{N-2j}^{\delta}\int_{-1}^{1}\si_{N-2j}^{\dl,d}(rx',sy')P_m^{\la}(u)(1-u^2)^{\la-1/2}\,du
\end{equation}
or
\begin{equation}\label{eq:alterKernel1}
K_{N,m}^{\dl,\gm}(r, s)=\frac{c_{d+1,\gm-\frac{1}{2}}}{A_N^{\dl}}\sum_{j=0}^{[N/2]}A_{j}^{\gm-\frac{3}{2}}
A_{N-2j}^{\delta}\int_{-1}^{1}\si_{N-2j}^{\dl, d+1}(r,s;u)P_m^{\la}(u)(1-u^2)^{\la-1/2}\,du.
\end{equation}
\end{cor}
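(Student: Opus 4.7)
The plan is to unravel the Ces\`aro operator into an integral operator and separate the angular and radial integrations, delegating the angular part to Proposition \ref{lem:hProj}. Starting from the kernel representation of $\si_{N,\K}^{\dl}$, I set $x=rx'$, write $y=sy'$, and use $dy=s^{d-1}\,ds\,d\si(y')$ together with the homogeneity $h_{\K}^2(sy')=s^{2\gm}h_{\K}^2(y')$ (valid since $h_{\K}^2$ is homogeneous of degree $2\gm$) to obtain
\begin{equation*}
\si_{N,\K}^{\dl}f(rx')=\int_0^\infty s^{d+2\gm-1}\int_{\sd}\si_{N,\K}^{\dl}(rx',sy')\,f(sy')\,h_{\K}^2(y')\,d\si(y')\,ds.
\end{equation*}

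Next, I would expand $f(sy')=\sum_{m\ge 0}\sum_{j=1}^{d(m)}f_{m,j}(s)Y_{m,j}(y')$ using the $h$-harmonic coefficients \eqref{eq:h-harcoeff}, interchange the sum with the spherical integral (justified because $f$ is Schwartz, so the coefficients decay faster than any polynomial in $m$), and then interchange with the $s$-integral. Applying Proposition \ref{lem:hProj} to the inner spherical integral evaluates it as $K_{N,m}^{\dl,\gm}(r,s)\,Y_{m,j}(x')$, where $K_{N,m}^{\dl,\gm}$ is represented by either \eqref{eq:alterKernel} (from \eqref{HBCDHd}) or \eqref{eq:alterKernel1} (from \eqref{HBCDHdp1}). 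Pulling $Y_{m,j}(x')$ out of the remaining radial integral and recognizing the result as $T_{N,m}^{\dl,\gm}f_{m,j}(r)$ via \eqref{eq:alterCesaro} yields the claimed identity.

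To justify the truncation to $0\le m\le N$, I would use the representation \eqref{eq:alterKernel}: by Mehler's formula \eqref{eq:Mehler}, the kernel $\Phi_k(r,s;u)$ is a polynomial in $u$ of degree at most $k$, so $\si_{N-2j}^{\dl,d}(r,s;u)$ is a polynomial in $u$ of degree at most $N-2j\le N$. Orthogonality of $P_m^{\la}$ against polynomials of strictly smaller degree with respect to the weight $(1-u^2)^{\la-\frac12}$ then forces the integral in \eqref{eq:alterKernel} to vanish whenever $m>N$, so $K_{N,m}^{\dl,\gm}\equiv 0$ for such $m$. Equivalently, $\si_{N,\K}^{\dl}f$ is a finite linear combination of the projections $P_{j,\K}f$ with $j\le N$, and the degree filtration of Dunkl--Hermite functions restricts the $h$-harmonic content to degree at most $N$.

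The corollary is essentially bookkeeping once Proposition \ref{lem:hProj} is established; there is no serious obstacle here. The only delicate points are the interchange of summation and integration (routine for Schwartz $f$) and the degree/parity truncation just described, both of which are standard considerations in $h$-harmonic analysis on $\R^d$.
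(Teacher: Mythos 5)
Your proposal is correct and follows essentially the same route the paper intends: pass to polar coordinates, expand $f$ in $h$-harmonics, and apply Proposition \ref{lem:hProj} to the spherical integral to identify the kernels $K_{N,m}^{\dl,\gm}$ and hence the operators $T_{N,m}^{\dl,\gm}$. Your orthogonality argument for the truncation to $m\le N$ (degree of $\si_{N-2j}^{\dl,d}(r,s;\cdot)$ in $u$ is at most $N$, so the integral against $P_m^{\la}$ vanishes for $m>N$) is a clean direct justification of a point the paper leaves implicit until Proposition \ref{prop:relKernels}.
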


\begin{rem}
Therefore, we can express the kernels $K_{N,m}^{\dl,\gm}(r, s)$ and consequently the operators $T_{N,m}^{\dl, \gm} f$ in terms of both $d$-dimensional and $(d+1)$-dimensional Hermite Ces\`{a}ro kernels. Indeed, the expression \eqref{eq:alterKernel} comes from the use of identity \eqref{HBCDHd} in Proposition \ref{lem:hProj}, and the expression \eqref{eq:alterKernel1} is obtained in view of identity \eqref{HBCDHdp1} in Proposition \ref{lem:hProj}.
\end{rem}

\subsection{The Laguerre connection}\label{LaguerreCon}

We are going to express the kernel $ K_{N,m}^{\dl,\gm}(r, s)$ in \eqref{eq:alterKernel} in terms of Laguerre
functions. This is contained in Proposition \ref{prop:relKernels} below. This result will not be enough for our purposes;  an improved version with complex parameters is required, see Proposition \ref{prop:Compkernel}. Nevertheless, we first show the non-complex version in detail since it is interesting to understand that, philosophically, there is an underlying phenomenon of transplantation.

Let us recall some basic facts about Laguerre functions. For $\ap>-1$, let $\psi_k^\ap$ be the normalized Laguerre functions given by
$$\psi_k^{\ap}(r) = \bigg(\frac{2\Gm(k+1)}{\Gm(k+\ap+1)} \bigg)^{\frac{1}{2}}L_k^\ap(r^2)e^{-\frac{1}{2}r^2}, \quad k=0,1,\ldots, $$
where $L_{k}^\ap$ are the Laguerre polynomials of order $\ap$, see \cite[page 76]{Lebedev}.

We have the following generating function identity for Laguerre functions (see \cite[(1.1.47)]{ST}). For $|w|<1$,
\begin{equation*}
% \label{eq:geneLag}
\sum_{k=0}^{\infty}\psi_{k}^{\ap}(r)\psi_{k}^{\ap}(s)w^{2k}=2(1-w^2)^{-1} (rsw)^{-\ap}e^{-\frac{1}{2}\big (\frac{1+w^2}{1-w^2} \big )(r^2+s^2)} I_{\ap}\bigg(\frac{2rsw}{1-w^2}\bigg).
\end{equation*}
Here, $I_{\alpha}$ is the modified Bessel function, see Subsection \ref{sec:ultras} for the definition and further results concerning these functions. From this identity, we easily deduce that
\begin{equation} \label{eq:generatingLag}
\sum_{k=0}^{\infty}(rs)^m \psi_{k}^{\ap+m}(r)\psi_{k}^{\ap+m}(s)w^{2k+m}=2(1-w^2)^{-1}
 e^{-\frac{1}{2}\big (\frac{1+w^2}{1-w^2} \big )(r^2+s^2)} (rsw)^{-\ap}I_{\ap+m}\bigg(\frac{2rsw}{1-w^2}\bigg).
\end{equation}

\begin{prop}\label{prop:relKernels}
For $ d \geq 2,$ let $\la=\frac{d}{2}+\gm-1.$ Then the kernel $K_{N,m}^{\dl,\gm}$ expressed either in \eqref{eq:alterKernel} or in \eqref{eq:alterKernel1} can be written as
$$K_{N,m}^{\dl,\gm}(r,s)=\frac{1}{A_{N}^{\dl}}\sum_{j=0}^{[(N-m)/2]}A_{N-m-2j}^{\delta}(rs)^m
\psi_j^{\la+m}(r)\psi_j^{\la+m}(s)$$
provided  $N\geq m$.  For other values of $N$, $K_{N,m}^{\dl,\gm}(r,s)=0$.
\end{prop}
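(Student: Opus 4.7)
I will prove the proposition by a generating function computation. The idea is to form the generating series
\begin{equation*}
G_m(r,s;w):=\sum_{N\ge 0}A_N^{\delta}\,K_{N,m}^{\delta,\gamma}(r,s)\,w^N
\end{equation*}
and show that it equals
\begin{equation*}
(1-w)^{-\delta-1}\sum_{k\ge 0}(rs)^m\psi_k^{\lambda+m}(r)\psi_k^{\lambda+m}(s)\,w^{2k+m},
\end{equation*}
so that extracting the coefficient of $w^N$ (using $(1-w)^{-\delta-1}=\sum_n A_n^{\delta}w^n$) yields exactly the claimed formula, and produces $0$ when $N<m$ since the Laguerre series starts at $w^m$.

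\textbf{Key steps.} Working with the expression \eqref{eq:alterKernel}, I would proceed as follows. First, starting from Mehler's formula \eqref{eq:Mehler} for $\Phi_k(r,s;u)$ and integrating term by term against $P_m^{\lambda}(u)(1-u^2)^{\lambda-1/2}$, one obtains
\begin{equation*}
\sum_{k\ge 0}w^k\int_{-1}^{1}\Phi_k(r,s;u)P_m^{\lambda}(u)(1-u^2)^{\lambda-1/2}\,du=\pi^{-d/2}(1-w^2)^{-d/2}e^{-\frac{1}{2}\frac{1+w^2}{1-w^2}(r^2+s^2)}\int_{-1}^1 e^{zu}P_m^{\lambda}(u)(1-u^2)^{\lambda-1/2}\,du
\end{equation*}
with $z=\tfrac{2rsw}{1-w^2}$. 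Second, apply the classical Gegenbauer--Bessel integral identity (recorded in the technical Section \ref{sec:technical})
\begin{equation*}
\int_{-1}^{1}e^{zu}P_m^{\lambda}(u)(1-u^2)^{\lambda-1/2}\,du=C_{m,\lambda}\,z^{-\lambda}I_{m+\lambda}(z),
\end{equation*}
and then invoke the Laguerre generating function \eqref{eq:generatingLag} to rewrite the resulting expression $z^{-\lambda}I_{m+\lambda}(z)\,e^{-\frac12\frac{1+w^2}{1-w^2}(r^2+s^2)}$ as a constant multiple of $(1-w^2)^{\lambda+1}\sum_k(rs)^m\psi_k^{\lambda+m}(r)\psi_k^{\lambda+m}(s)w^{2k+m}$. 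Third, multiply the result by the Cesàro factor $(1-w)^{-\delta-1}$ (which produces $\sum_N A_N^{\delta}w^N$-convolution) and by $c_{d,\gamma}\sum_j A_j^{\gamma-1}w^{2j}=c_{d,\gamma}(1-w^2)^{-\gamma}$ arising from the inner sum in \eqref{eq:alterKernel}. Using the fundamental relation $\lambda=\tfrac{d}{2}+\gamma-1$ one finds that the combined $(1-w^2)$-exponent is $-d/2+\lambda+1-\gamma=0$, so \emph{all $(1-w^2)$ factors cancel}; this is the crucial algebraic identity that makes the result clean. A straightforward bookkeeping of constants $c_{d,\gamma}$, $C_{m,\lambda}$ and $\pi^{-d/2}/2^{\lambda+1}$ then shows the overall prefactor equals $1$, and extracting the coefficient of $w^N$ yields the claimed identity for $N\ge m$, with $K_{N,m}^{\delta,\gamma}\equiv 0$ for $N<m$.

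\textbf{Second representation and main obstacle.} To justify that the expression \eqref{eq:alterKernel1} yields the same Laguerre representation, I would run the identical generating function computation but with the $(d+1)$-dimensional Mehler formula and the factor $(1-w^2)^{-(\gamma-1/2)}$ coming from $\sum_j A_j^{\gamma-3/2}w^{2j}$; the same $\lambda=\tfrac{d+1}{2}+(\gamma-\tfrac12)-1=\tfrac{d}{2}+\gamma-1$ is unchanged, and the $(1-w^2)$-exponent is again $-(d+1)/2+\lambda+1-(\gamma-1/2)=0$, giving cancellation and the same answer. The main obstacle is the Gegenbauer--Bessel integral together with the verification that the product of constants $c_{d,\gamma}$, $C_{m,\lambda}$, $\pi^{-d/2}$ and $2^{-\lambda-1}$ collapses to $1$ in the paper's normalization of $P_m^{\lambda}$; this is where the technical formulas of Section \ref{sec:technical} are needed. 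Convergence of the generating series is unproblematic for $|w|<1$ since Mehler's formula and \eqref{eq:generatingLag} both converge absolutely there, so the coefficient extraction is rigorous.
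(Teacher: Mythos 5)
Your proposal is correct and follows essentially the same route as the paper's own proof: integrate Mehler's formula against $P_m^{\la}(u)(1-u^2)^{\la-1/2}$, use the Gegenbauer--Bessel identity of Lemma \ref{lem:UltraBessel}, match against the Laguerre generating function \eqref{eq:generatingLag}, multiply by $(1-w)^{-\dl-1}$ and $(1-w^2)^{-\gm}$, and equate coefficients of $w^N$. Your observation that the $(1-w^2)$-exponents cancel (and that $c_{d,\gm}\cdot\frac{\Gm(\la+1/2)}{2\pi^{(d-1)/2}}=1$) is exactly the bookkeeping the paper carries out, including the $(d+1)$-dimensional variant.
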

%-----------------------------------------------------------------------------------

%--------------------------------------------------------------------------------------
\subsection{Extension to complex parameters}

In order to prove Theorem \ref{Th:Vvine} we need to extend our operators for complex parameters and get proper expressions for them.

First, observe that the kernels $K_{N,m}^{\dl, \gm}$ in \eqref{eq:alterKernel}, initially expressed in terms of standard Ces\'aro kernels, are also expressible in terms of Laguerre functions $\psi_k^{\la+m}$ by Proposition \ref{prop:relKernels}. Then, since Laguerre functions of complex parameters are well defined, the kernels $K_{N,m}^{\dl, \gm}$ make sense even if $\dl$ and $\gm$ are complex. For $\zeta \in \C$ let us define
\begin{equation}\label{lambdaC}
\la(\zeta):= \frac{d}{2}+\zeta-1.
\end{equation}
Fix $\varepsilon  >0$ and consider the sequence of operators
\begin{equation}\label{TNeps}
\mathcal{T}_{N,m}^{\varepsilon,\zeta}f(r):= \int_0^{\infty}\mathcal{K}_{N,m}^{ \varepsilon,\zeta}(r,s)f(s)\,ds
\end{equation}
defined for $f \in L^p(\R^+, dr)$ with kernel
\begin{equation}\label{eq:Kcal}
\mathcal{K}_{N,m}^{ \varepsilon,\zeta}(r,s):=r^{\frac{2\dl(\zeta)}{p}}s^{\frac{2\dl(\zeta)}{p'}}
K_{N,m}^{\dl(\zeta+\varepsilon),\zeta}(r,s)
\end{equation}
where $p>2 $. Then, we can go back and prove that the kernels $\mathcal{K}_{N,m}^{\ve,\zeta}(r,s)$, that  are now defined in terms of Laguerre functions of complex parameters, can be expressed also in terms of the $d$-dimensional and $(d+1)$-dimensional kernels of Ces\`{a}ro means involving parameters with certain values of $\zeta$. This result is contained in Proposition \ref{prop:Compkernel}.

Observe that the operators $\mathcal{T}_{N,m}^{\varepsilon,\zeta}$ can be understood as ``extensions'', for complex parameters, of the operators $T_{N,m}^{\dl,\gm}$. In turn, the kernels $\mathcal{K}_{N,m}^{ \varepsilon,\zeta}$ can be understood as ``extensions'', for complex parameters, of the kernels $K_{N,m}^{\dl,\gm}$.

\begin{prop}\label{prop:Compkernel}
Let  $\displaystyle C_{d}=\frac{4\pi^{(d-1)/2}}{\Gm((d-1)/2)}$. For any $\beta\in \R$ we have
\begin{multline}\label{nuc1}
\mathcal{K}_{N,m}^{\varepsilon,i\beta}(r, s)=\frac{C_d}{A_N^{\dl(i\bt+\ve)}}
\sum_{k=0}^N\binom{\ve/2}{N-k}\\
\times \sum_{j=0}^{[k/2]} A_{j}^{\frac{\ve}{2}+i\bt-1}A_{k-2j}^{\dl(i\beta+\frac{\ve}{2})}
 r^{\frac{2\dl(i\beta)}{p}}s^{\frac{2\dl(i\beta)}{p'}} \int_{-1}^{1}\si_{k-2j}^{\dl(i\beta+\frac{\ve}{2}),d}(r,s;u)Q_{m}^{\la(i\beta)}(u)\,du
\end{multline}
 and
\begin{multline} \label{nuc2}
\mathcal{K}_{N,m}^{\varepsilon,\frac12+i\beta}(r, s)=\frac{C_{d+1}}{A_N^{\dl(\frac12+i\bt+\ve)}}\sum_{k=0}^N
\binom{\ve/2}{N-k}\\
\times \sum_{j=0}^{[k/2]} A_{j}^{\tfrac{\ve}{2}+i\bt-1}A_{k-2j}^{\dl(\frac12+i\beta+\frac{\ve}{2})}
 r^{\frac{2\dl(\frac12+i\beta)}{p}} s^{\frac{2\dl(\frac12+i\beta)}{p'}}\int_{-1}^{1}\si_{k-2j}^{\dl(\frac12+i\beta+\frac{\ve}{2}),
d+1}(r,s;u)Q_{m}^{\la(\frac12+i\beta)}(u)\,du.
\end{multline}
Here, the functions $Q_m^{\lambda(\zeta)}(u)$ are the ones defined in \eqref{eq:Q}.
\end{prop}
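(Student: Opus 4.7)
The kernel $\mathcal{K}_{N,m}^{\varepsilon,\zeta}$ is an analytic function of $\zeta$ on a vertical strip, since it is defined through the Laguerre expression of Proposition~\ref{prop:relKernels} and the normalized Laguerre functions $\psi_j^{\alpha}$ depend analytically on $\alpha$. The target is to re-express $\mathcal{K}_{N,m}^{\varepsilon,\zeta}$ on the two lines $\Re\zeta=0$ and $\Re\zeta=1/2$ in terms of standard Hermite Ces\`{a}ro kernels of complex order $\delta(\zeta+\varepsilon/2)$, whose real part sits just above $(d-1)/2$, so that Theorems~\ref{maximal} and \ref{Th:MVvine} can be invoked within the three-lines argument.

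\medskip

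\noindent\textit{Step 1: complex-parameter Funk--Hecke expansion.} I begin from the Funk--Hecke-type formulas of Corollary~\ref{kernelexp}: \eqref{eq:alterKernel} for \eqref{nuc1}, and \eqref{eq:alterKernel1} for \eqref{nuc2}. These are extended from real $\gm$ to $\gm=\zeta=i\beta$ (respectively $\gm=\zeta=\tfrac12+i\beta$) by analytic continuation. The $u$-integrals $\int_{-1}^{1}\si_{N-2j}^{\cdot,\cdot}(r,s;u)\,P_m^{\la(\zeta)}(u)(1-u^2)^{\la(\zeta)-1/2}\,du$ converge on the target lines since $\Re(\la(\zeta)-1/2)=(d-3)/2+\Re\zeta>-1$ when $d\ge 2$ and $\Re\zeta\in\{0,\tfrac12\}$, and the identity is anchored on a real segment where it coincides with the Laguerre representation of Proposition~\ref{prop:relKernels}.

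\medskip

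\noindent\textit{Step 2: Ces\`{a}ro order-reduction.} To decouple the shift $\varepsilon$ from the imaginary part $i\beta$, I write $\dl(\zeta+\varepsilon)=\dl(\zeta+\varepsilon/2)+\varepsilon/2$ and apply the generating-function identity \eqref{binomIdent}/\eqref{generating} in the form
\begin{equation*}
A_{N-2j}^{\dl(\zeta+\varepsilon)}\,\si_{N-2j}^{\dl(\zeta+\varepsilon),d}(r,s;u)=\sum_{l=0}^{N-2j}A_{N-2j-l}^{\varepsilon/2-1}\,A_{l}^{\dl(\zeta+\varepsilon/2)}\,\si_{l}^{\dl(\zeta+\varepsilon/2),d}(r,s;u),
\end{equation*}
with the analogous $(d+1)$-dimensional identity used for \eqref{nuc2}. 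Substituting into Step~1 and re-indexing via $k:=l+2j$, so that $j$ runs through $\{0,\ldots,[k/2]\}$ and $k$ through $\{0,\ldots,N\}$, produces the double sum of \eqref{nuc1} and \eqref{nuc2}, with coefficient $A_{N-k}^{\varepsilon/2-1}$ (which is the quantity the statement denotes $\binom{\varepsilon/2}{N-k}$).

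\medskip

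\noindent\textit{Step 3: absorption of Gamma factors into $Q_m^{\la(\zeta)}$.} Multiplying through by $r^{2\dl(\zeta)/p}s^{2\dl(\zeta)/p'}$ (cf.\ \eqref{eq:Kcal}) converts $K_{N,m}^{\dl(\zeta+\varepsilon),\zeta}$ into $\mathcal{K}_{N,m}^{\varepsilon,\zeta}$. What remains is to bundle the factor $c_{d,\zeta}=2\pi^{(d-1)/2}/\Gm(\tfrac{d-1}{2}+\zeta)$ (respectively $c_{d+1,\zeta-1/2}$), the factor $A_{j}^{\zeta-1}$ (respectively $A_{j}^{\zeta-3/2}$), and the integrand $P_m^{\la(\zeta)}(u)(1-u^2)^{\la(\zeta)-1/2}$ into the clean combination $C_d\,A_{j}^{\varepsilon/2+i\beta-1}\,Q_m^{\la(\zeta)}(u)$ (respectively $C_{d+1}$-version). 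This is a Gamma-function manipulation governed by the integral identities of Section~\ref{sec:technical} and by the precise normalization of $Q_m^{\la(\zeta)}$ given in \eqref{eq:Q}, which are designed so that the complex factor $\Gm(\tfrac{d-1}{2}+\zeta)$ is replaced by the $\zeta$-free $\Gm(\tfrac{d-1}{2})$, and the exponent of $A_j$ is shifted up by $\varepsilon/2$.

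\medskip

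\noindent\textit{Main obstacle.} The principal difficulty is Step~3: carrying out the complex Gamma-function bookkeeping that converts $c_{d,\zeta}\,A_{j}^{\zeta-1}\,P_m^{\la(\zeta)}(u)(1-u^2)^{\la(\zeta)-1/2}$ into the clean $C_d\,A_j^{\varepsilon/2+i\beta-1}\,Q_m^{\la(\zeta)}(u)$ form. This hinges on the precise normalization adopted in \eqref{eq:Q} and on the integral formulas for ultraspherical polynomials with complex parameter gathered in Section~\ref{sec:technical}. A secondary technical point is verifying that the analytic continuation in Step~1 stays clear of the Gamma-function poles on the relevant vertical strip, which is the reason the two cases of the proposition are treated at $\Re\zeta=0$ and $\Re\zeta=\tfrac12$ respectively.
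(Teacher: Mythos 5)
Your overall strategy --- analytically continuing the Funk--Hecke representation \eqref{eq:alterKernel} in $\gm$, splitting the Ces\`aro order by a convolution identity, and then trading $P_m^{\la}(u)(1-u^2)^{\la-1/2}$ for $Q_m^{\la}(u)$ --- is genuinely different from the paper's proof, which never continues \eqref{eq:alterKernel} at all: it integrates Mehler's formula \eqref{eq:Mehler} directly against $Q_m^{\la(i\bt)}$, applies \eqref{eq:BesselUltra}, and multiplies the resulting generating identity by the weight $(1-w^2)^{-i\bt}(1-w)^{-\dl(i\bt+\ve)-1}$ written as $(1-w^2)^{-\ve/2-i\bt}(1-w)^{-\dl(i\bt+\ve/2)-1}(1+w)^{\ve/2}$. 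Your route could be completed, and your Step 3 is less mysterious than you suggest: comparing the continued Lemma \ref{lem:UltraBessel} with \eqref{eq:BesselUltra} shows that $P_m^{\la}(u)(1-u^2)^{\la-1/2}$ and $\tfrac{2\Gm(\la+1/2)}{\Gm(\ap+1/2)}Q_m^{\la}(u)$ have the same integral against every $e^{zu}$, hence against the polynomials $u\mapsto \si_n^{\dl}(r,s;u)$, and the factor $\Gm(\tfrac{d-1}{2}+i\bt)$ in $c_{d,i\bt}$ then cancels against $\Gm(\la(i\bt)+\tfrac12)$ to leave exactly $C_d$.

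However, there is a concrete gap in Steps 2--3: the coefficients your computation actually produces are not those in the statement. Your order-reduction corresponds to factoring the extra weight as $(1-w)^{-\ve/2}\cdot(1-w^2)^{-i\bt}$, and after the reindexing $k=l+2j$ it yields the outer coefficient $A_{N-k}^{\ve/2-1}=\binom{N-k+\ve/2-1}{N-k}$ and the inner coefficient $A_j^{i\bt-1}$. You assert that $A_{N-k}^{\ve/2-1}$ \emph{is} $\binom{\ve/2}{N-k}$; it is not ($\binom{\ve/2}{n}$ is the coefficient of $w^n$ in $(1+w)^{\ve/2}$ and changes sign for $n\ge 2$, while $A_n^{\ve/2-1}$ is the coefficient in $(1-w)^{-\ve/2}$ and is positive), and nothing in your Step 3 --- which only touches the $u$-integral and multiplicative constants --- can shift the exponent of $A_j$ from $i\bt-1$ to $\ve/2+i\bt-1$. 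The missing move is a further resummation of the double sum corresponding to the identity $(1-w)^{-\ve/2}(1-w^2)^{-i\bt}=(1+w)^{\ve/2}(1-w^2)^{-\ve/2-i\bt}$, which is exactly what generates $\binom{\ve/2}{N-k}$ and $A_j^{\ve/2+i\bt-1}$. This is not cosmetic: Proposition \ref{prop:estimate} relies on a bound of the form $\sum_j|A_j^{\ve/2+i\bt-1}|A_{k-2j}^{\dl(\ve/2)}\le C(\bt)|A_k^{\dl(i\bt+\ve)}|$ in the manner of Lemma \ref{lem:bin.est}, whereas your grouping leaves $|A_j^{i\bt-1}|\sim j^{-1}/|\Gm(i\bt)|$, which costs a logarithmic factor in that estimate. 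Either carry out the extra regrouping to land on \eqref{nuc1}--\eqref{nuc2} as stated, or prove your variant identity and then redo the estimate in Proposition \ref{prop:estimate} accordingly.
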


%--------------------------------------------------------------------------------------
\section{Mixed norm estimates for the Ces\`{a}ro means: proofs of Theorems \ref{main} and \ref{Th:Vvine}}
\label{subsec:Mainproofs}
%-----------------------------------------------------------------------------------

In this Section we prove the main theorems. Theorem \ref{main} easily follows from Theorem \ref{Th:Vvine}. In its turn, the proof of Theorem \ref{Th:Vvine} needs the ingredients from the previous sections, and the different operators and their extensions with complex parameters play a role. Since the quantity of operators and the corresponding kernels is large and the notation cumbersome, we present the Table \ref{tab:notation} with a summary of notation and references that will make the reading of this section more comfortable.

{{\begin{table*}[htbp]
\centering
\caption{Summary of operators and kernels}
%% Tric to add vertical space:
\newcommand{\vs}{\rule[-6pt]{0pt}{20pt}\ignorespaces}
\begin{tabular}{|c c|}
\hline
\vs $T_{N,m}^{\dl,\gm}$& operator defined in \eqref{eq:alterCesaro} \\
\hline
\vs $K_{N,m}^{\dl,\gm}$& kernel of $T_{N,m}^{\dl,\gm}$, defined  in \eqref{eq:alterKernel} or \eqref{eq:alterKernel1} \\
\hline
\vs $\mathcal{T}_{N,m}^{\varepsilon,\zeta}$ & operator defined in \eqref{TNeps}\\
\hline
\vs $\mathcal{K}_{N,m}^{ \varepsilon,\zeta}$& kernel of $\mathcal{T}_{N,m}^{\varepsilon,\zeta}$, related to $K_{N,m}^{\dl,\gm}$ by \eqref{eq:Kcal}\\
\hline
\vs $\mathbb{S}_N^{\dl}$& operator defined in \eqref{eq:supremeOp} in terms of the standard Ces\`{a}ro means $\si_N^{\dl}$ \\
\hline

\end{tabular}

\label{tab:notation}
\end{table*}
}}

\begin{proof}[Proof of Theorem \ref{main}]
In view of Corollary \ref{kernelexp} we have that
\begin{align*}
\|\si_{N,\K}^{\dl}f\|_{(p,2)}&=\bigg(\int_0^\infty\bigg (\int_{\sd}|\si_{N,\K}^{\dl}f(rx')|^2h_{\K}^2(x')\,d\si(x')
\bigg )^{\frac{p}{2}}r^{d+2\gm-1}\,dr \bigg)^{\frac{1}{p}}\\
&=\bigg(\int_0^\infty\bigg (\int_{\sd}\bigg|\sum_{m=0}^\infty\sum_{j=1}^{d(m)}|T_{N,m}^{\dl,\gm}
f_{m,j}(r)Y_{m,j}(x')\bigg|^2h_{\K}^2(x')\,d\si(x') \bigg )^{\frac{p}{2}}r^{d+2\gm-1}\,dr \bigg)^{\frac{1}{p}}\\
&=\Big\|\Big(\sum_{m=0}^\infty\sum_{j=1}^{d(m)}|T_{N,m}^{\dl,\gm}f_{m,j}(r)|^2\Big)^{1/2}
\Big\|_{L^p(\R^+, r^{d+2\gamma-1}dr)}.
\end{align*}
Then, with Theorem \ref{Th:Vvine}, we can conclude.
\end{proof}

Therefore, it remains to prove Theorem \ref{Th:Vvine}. Given  sequences of functions $(f_{m,j}) $ and $(g_{m,j}) $ such
that
$$\int_0^\infty \Big ( \sum_{m=0}^\infty\sum_{j=1}^{d(m)}|f_{m,j}(r)|^2 \Big )^{\frac{p}{2}}r^{d+2\gm-1}\,dr=
\int_0^\infty \Big ( \sum_{m=0}^\infty\sum_{j=1}^{d(m)}|g_{m,j}(r)|^2 \Big )^{\frac{p'}{2}}r^{d+2\gm-1}\,dr=1 $$
we define $\widetilde{f}_{m,j}(r)=f_{m,j}(r)r^{\frac{2\dl(\gm)}{p}} $ and $\widetilde{g}_{m,j}(r)=g_{m,j}(r)r^{\frac{2\dl(\gm)}{p'}}$, where $\dl(\gm)$ is as in \eqref{zetaC}. Then it follows that
$$\int_0^\infty \Big ( \sum_{m=0}^\infty\sum_{j=1}^{d(m)}|\widetilde{f}_{m,j}(r)|^2 \Big )^{\frac{p}{2}}
\,dr=\int_0^\infty \Big ( \sum_{m=0}^\infty\sum_{j=1}^{d(m)}|\widetilde{g}_{m,j}(r)|^2 \Big )^{\frac{p'}{2}}\,dr=1. $$
We consider the function $F_N(\zeta)$ on the strip $0\leq \Re(\zeta)\leq \frac12$ defined by
$$  F_N(\zeta):=\int_0^{\infty}\bigg(\sum_{m=0}^\infty\sum_{j=1}^{d(m)}\mathcal{T}_{N,m}^{\ve,\zeta} \widetilde{f}_{m,j}(r)\overline{\widetilde{g}_{m,j}}(r) \bigg)dr.$$
It is clear that when $ f_{m,j} $ are compactly supported on $ \R^+ $, $F_N(\zeta)$ is a holomorphic function in the interior
of the strip $0\leq \Re(\zeta)\leq \frac12$ and continuous up to the boundary. We will prove that $F_N(\Re(\zeta))$ is bounded in the strip. In order to do that, we need the following variant of three lines lemma proved in \cite[Ch.\ V, Lemma 4.2]{SW}.
\begin{lem}[Stein-Weiss]
\label{SW}Suppose $F$ is a function defined and continuous  on the unit strip   $\mathcal{S}=\{z\in \C: 0\leq \Re(z)\leq 1\} $
 that is analytic in the interior of $\mathcal{S}$ satisfying
$$\sup_{x+iy \in \mathcal{S}}e^{-a|y|}\log{|F(x+iy)|}< \infty$$
for some $a< \pi $. Then
\begin{equation} \label{eq:3ll}
 \log{|F(x)|}\leq \frac12 \sin{\pi x}\; \int_{-\infty}^{\infty} \bigg(\frac{\log{|F(iy)|}}{\cosh{\pi y}-\cos{\pi x}}
 \:+\:\frac{\log{|F(1+iy)|}}{\cosh{\pi y}+\cos{\pi x}}\bigg)\, dy
 \end{equation} whenever $0< x< 1$.
 \end{lem}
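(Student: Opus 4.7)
The inequality~\eqref{eq:3ll} is precisely the Poisson integral representation for the strip $\mathcal{S}=\{0 \le \Re z \le 1\}$ applied to the subharmonic function $u := \log|F|$, suitably justified under the admissible growth hypothesis. My plan is to identify the kernels, build a harmonic majorant from the boundary data, and close the argument by Phragm\'en--Lindel\"of in the strip.

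First, via the conformal map $z \mapsto e^{i\pi z}$ from $\mathcal{S}$ to the upper half-plane, I would pull back the standard Poisson kernel to obtain
\[ P_0(x,y) = \tfrac{1}{2}\,\tfrac{\sin \pi x}{\cosh \pi y - \cos \pi x}, \qquad P_1(x,y) = \tfrac{1}{2}\,\tfrac{\sin \pi x}{\cosh \pi y + \cos \pi x}, \]
so that every bounded harmonic $h$ on $\mathcal{S}$, continuous up to $\partial \mathcal{S}$, admits the representation $h(x) = \int_{\R} P_0(x,y)\,h(iy)\,dy + \int_{\R} P_1(x,y)\,h(1+iy)\,dy$. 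These weights match exactly the right-hand side of~\eqref{eq:3ll}. Next, define
\[ U(z) := \int_{\R} P_0(\Re z, \Im z - y)\,\log|F(iy)|\,dy + \int_{\R} P_1(\Re z, \Im z - y)\,\log|F(1+iy)|\,dy. \]
Since $P_j(x,y)$ decays like $e^{-\pi |y|}$ as $|y|\to \infty$ while the admissible growth bound gives $|\log|F(\cdot + iy)|| \le b e^{a|y|}$ with $a<\pi$, the integrals are absolutely convergent, $U$ is harmonic on the interior of $\mathcal{S}$ with the correct boundary data, and $U(x)$ equals the right-hand side of~\eqref{eq:3ll}.

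Then the subharmonic function $v := u - U$ has boundary values $\le 0$ on $\partial \mathcal{S}$, so the goal reduces to proving $v \le 0$ on $\mathcal{S}$. For this I would choose $\beta$ with $a < \beta < \pi$ and set
\[ v_\epsilon(z) := v(z) - \epsilon \cos\bigl(\beta(\Re z - \tfrac12)\bigr)\,\cosh(\beta \Im z), \]
where the subtracted term is the real part of the entire function $\epsilon\cos(\beta(z-\tfrac12))$, hence harmonic. Since $\cos(\beta(x - 1/2)) \ge \cos(\beta/2) > 0$ for $x \in [0,1]$ and $\cosh(\beta y)$ dominates $e^{a|y|}$ (because $\beta > a$), the function $v_\epsilon$ is subharmonic on $\mathcal{S}$, satisfies $v_\epsilon \le 0$ on $\partial \mathcal{S}$, and tends to $-\infty$ as $|\Im z| \to \infty$. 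The maximum principle then forces $v_\epsilon \le 0$ on all of $\mathcal{S}$, and letting $\epsilon \to 0$ and evaluating at $z = x \in (0,1)$ delivers~\eqref{eq:3ll}.

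The main technical obstacle will be handling possible zeros of $F$ on $\partial \mathcal{S}$, where $\log|F|$ may equal $-\infty$ and so a priori the integrals defining $U$ could fail to be meaningful. I would deal with this by truncating: replace $\log|F|$ by $\max(\log|F|, -M)$ on the boundary, run the preceding argument to obtain the corresponding inequality, and then pass to $M \to \infty$ via monotone convergence on the right-hand side of~\eqref{eq:3ll}, using upper semicontinuity of $\log|F|$ to preserve the direction of the inequality on the left.
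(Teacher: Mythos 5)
The paper does not actually prove this lemma: it is quoted verbatim from \cite[Ch.~V, Lemma~4.2]{SW}, so there is no in-paper argument to compare against, and your proof is a correct reconstruction of the standard one --- the kernels you transplant from the half-plane via $z\mapsto e^{i\pi z}$ are exactly the harmonic measure densities of the strip at the point $x$ (one checks their total mass is $1$), and the Phragm\'en--Lindel\"of comparison with $\epsilon\,\Re\cos\bigl(\beta(z-\tfrac12)\bigr)$ for $a<\beta<\pi$ is the classical device. The only imprecision is your early claim that the growth hypothesis yields a two-sided bound $|\log|F(\cdot+iy)||\le b e^{a|y|}$; it gives only the upper bound, but the truncation $\max(\log|F|,-M)$ you introduce at the end repairs this in full (it supplies the missing lower bound needed for absolute convergence and for the bound $U_M\ge -M$ in the maximum-principle step, not merely the issue of zeros), after which monotone convergence closes the argument.
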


\begin{prop}\label{prop:estimate}
Let $\zeta:=\gm+i\beta$, with $0 \leq \gm \leq 1/2$ and $\beta\in \R$. We have
$$ |F_N(i\bt)|\leq C_0(\bt) \qquad \hbox{and} \qquad |F_N(\tfrac12+i\bt)|\leq C_1(\bt) $$
where $ C_0(\bt)$ and $C_1(\bt) $ are independent of $N$ and of admissible growth. Moreover,
$$|F_N(\gm)|\le C, \quad \hbox{ for } \quad 0 \leq \gm \leq1/2. $$
\end{prop}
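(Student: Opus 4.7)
The plan is to prove the two boundary estimates using the explicit kernel representations from Proposition \ref{prop:Compkernel}, in each case reducing the integrand to the vector-valued maximal estimate of Theorem \ref{Th:MVvine}, and then to obtain the interior bound via the Stein--Weiss three lines lemma (Lemma \ref{SW}). For the line $\Re(\zeta)=0$ I would start from \eqref{nuc1} and take absolute values inside $\mathcal{K}_{N,m}^{\ve,i\bt}(r,s)$, using $\big|r^{2\dl(i\bt)/p}\big|=r^{(d-1)/p}$, $\big|s^{2\dl(i\bt)/p'}\big|=s^{(d-1)/p'}$, together with a uniform-in-$m$ bound of the form $\big|Q_m^{\la(i\bt)}(u)\big|\le C(\bt)(1-u^2)^{(d-3)/2}$ with $C(\bt)$ of admissible growth, supplied by the technical results on ultraspherical polynomials with complex parameters in Section \ref{sec:technical}. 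After this reduction, the inner integral takes exactly the form that defines $\mathbb{S}_{k-2j}^{\dl(i\bt+\ve/2)}|f|(r)$. The binomial weights $\big|\binom{\ve/2}{N-k}\big|\big|A_j^{\ve/2+i\bt-1}\big|\big|A_{k-2j}^{\dl(i\bt+\ve/2)}\big|$ can then be summed via Lemma \ref{lem:bin.est} and the generating-function identity \eqref{binomIdent}, producing a bound $\widetilde{C}(\bt)\big|A_N^{\dl(i\bt+\ve)}\big|$ that cancels the denominator in \eqref{nuc1}. This yields the pointwise majorization
$$\big|\mathcal{T}_{N,m}^{\ve,i\bt}f(r)\big|\le C'(\bt)\sup_{k\ge 0}\mathbb{S}_{k}^{\dl(i\bt+\ve/2)}|f|(r),$$
uniformly in $m$ and $N$, with $C'(\bt)$ of admissible growth. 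Cauchy--Schwarz in $(m,j)$ followed by H\"older in $r$ then gives
$$|F_N(i\bt)|\le \Big\|\Big(\sum_{m,j}\big|\mathcal{T}_{N,m}^{\ve,i\bt}\widetilde{f}_{m,j}\big|^2\Big)^{1/2}\Big\|_{p}\Big\|\Big(\sum_{m,j}|\widetilde{g}_{m,j}|^2\Big)^{1/2}\Big\|_{p'},$$
whose second factor is $1$ by normalization, while the first is controlled by $C_0(\bt)$ through the previous pointwise bound and Theorem \ref{Th:MVvine} applied with $\ve/2$ in place of $\ve$.

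For the line $\Re(\zeta)=1/2$ the argument transfers verbatim after replacing \eqref{nuc1} by \eqref{nuc2}: now $2\dl(1/2+i\bt)/p$ and $2\dl(1/2+i\bt)/p'$ have real parts $d/p$ and $d/p'$, and the factor $(1-u^2)^{\la(1/2+i\bt)-1/2}$ governing $Q_m^{\la(1/2+i\bt)}$ has real part $(d-2)/2$; these are precisely the weights appearing in the $(d+1)$-dimensional analogue of $\mathbb{S}_N^{\dl}$. Invoking Theorem \ref{Th:MVvine} in dimension $d+1$ produces $|F_N(1/2+i\bt)|\le C_1(\bt)$ with $C_1$ of admissible growth.

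Finally, for compactly supported $f_{m,j}$ the function $F_N(\zeta)$ is holomorphic on $0<\Re(\zeta)<1/2$ and continuous up to the boundary; a coarse estimate of $\mathcal{K}_{N,m}^{\ve,\zeta}$ using the series representation in terms of the Laguerre functions $\psi_j^{\la(\zeta)+m}$ (in which a complex shift of the Laguerre parameter introduces only exponential factors in $|\Im(\zeta)|$) shows that $F_N$ has admissible growth throughout the strip, so Lemma \ref{SW}, rescaled to width $1/2$, applies and together with the two boundary estimates gives $|F_N(\gm)|\le C$ uniformly for $\gm\in[0,1/2]$. The main obstacle I anticipate is the uniform-in-$m$ estimate on $|Q_m^{\la(\zeta)}(u)|$ along the two boundary lines with admissible $\bt$-dependence: without it, the sum over $m$, whose length grows with $N$, cannot be absorbed into a scalar admissible factor, and the reduction to the maximal vector-valued inequality of Theorem \ref{Th:MVvine} breaks down. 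I expect this estimate to be extracted from integral representations of the $Q_m^{\la(\zeta)}$ as constants times products of Bessel functions and ultraspherical polynomials of complex parameter, worked out in Section \ref{sec:technical}.
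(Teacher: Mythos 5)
Your proposal follows essentially the same route as the paper: majorize $\mathcal{T}_{N,m}^{\ve,i\bt}$ pointwise by $C(\bt)\sup_{k}\big|\mathbb{S}_{k}^{\dl(i\bt+\ve/2),d}\big|$ using the kernel formula \eqref{nuc1}, the uniform bound on $Q_m^{\la(i\bt)}$ from Lemma \ref{lem:Q}, and the binomial estimates of Lemma \ref{lem:bin.est}; then conclude via Cauchy--Schwarz, H\"older, Theorem \ref{Th:MVvine} (in dimension $d+1$ for the line $\Re(\zeta)=\tfrac12$), and the Stein--Weiss three lines lemma. The details you flag as the main obstacle (the uniform-in-$m$ estimate $|Q_m^{\la(\zeta)}(u)|\le C(\bt)(1-u^2)^{\Re(\la)-1/2}$ with admissible $\bt$-dependence) are exactly what Lemma \ref{lem:Q} supplies, so the argument is correct and matches the paper's.
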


\begin{proof}
In view of \eqref{nuc1} in Proposition \ref{prop:Compkernel}, the operator $\mathcal{T}_{N,m}^{\ve,i\bt}$ is given by
\begin{multline*}
\mathcal{T}_{N,m}^{\ve,i\bt}f(r)=\frac{C_d}{A_N^{\dl(i\bt+\ve)}} \sum_{k=0}^N\binom{\ve/2}{N-k}\sum_{j=0}^{[k/2]}
A_{j}^{\frac{\ve}{2}+i\bt-1}A_{k-2j}^{\dl(i\beta+\frac{\ve}{2})}\\ \times
\int_0^{\infty}r^{\frac{2\dl(i\beta)}{p}}s^{\frac{2\dl(i\beta)}{p'}} \int_{-1}^{1}\si_{k-2j}^{\dl(i\beta+\frac{\ve}{2}),d}(r,s;u)Q_{m}^{\la(i\beta)}(u) \,duf(s)\,ds.
\end{multline*}
The operators $\mathcal{T}_{N,m}^{\ve,i\bt}$ can be bounded in terms of the operators $\mathbb{S}_k^{\dl(i\bt+\ve/2),d}$.
Indeed, taking into account the fact that $\sum_{j=0}^{\infty}\big|\binom{\ve/2}{j}\big|< \infty $, we have
$$ |\mathcal{T}_{N,m}^{\ve,i\beta}\widetilde{f}_{m,j}(r) | \leq C_0(\beta)\; \sup_{N}|\mathbb{S}_N^{\dl(i\beta+\ve /2),d} \widetilde{f}_{m,j}(r)|$$
provided we have the estimate
$$\frac{C_{d}}{\big|A_{N}^{\dl(i\beta+\ve)}\big|}  \sum_{j=0}^{[k/2]} \big|A_{j}^{\ve/2+i\bt-1}A_{k-2j}^{\dl(i\beta+\ve/2)}\big| \leq C_0(\beta) $$
for all $0\leq k\leq N $, $N=1,2,3,\cdots$. But this can be proved similarly as in the case of Lemma
\ref{lem:bin.est}, and hence we leave the details to the reader.

With the above observations, we obtain, for $p>2$,
\begin{align*}
|F_N(i\bt)|&\le \int_0^{\infty}\bigg(\sum_{m=0}^\infty\sum_{j=1}^{d(m)}|\mathcal{T}_{N,m}^{\ve,i\beta} \widetilde{f}_{m,j}(r)|^2\bigg)^{1/2}\bigg(\sum_{m=0}^\infty \sum_{j=1}^{d(m)}|\overline{\widetilde{g}_{m,j}}(r)|^2\bigg)^{1/2}dr\\
&\le C_0(\beta)\int_0^{\infty}\bigg(\sum_{m=0}^\infty\sum_{j=1}^{d(m)} (\sup_{N}|\mathbb{S}_N^{\dl(i\beta+\ve /2), d} \widetilde{f}_{m,j}(r)|)^2\bigg)^{1/2} \bigg(\sum_{m=0}^\infty\sum_{j=1}^{d(m)}|\overline{\widetilde{g}_{m,j}}(r)|^2\bigg)^{1/2}dr\\
&\le C_0(\beta)\bigg(\int_0^{\infty}\bigg(\sum_{m=0}^\infty \sum_{j=1}^{d(m)}(\sup_{N}|\mathbb{S}_N^{\dl(i\beta+\ve /2), d} \widetilde{f}_{m,j}(r)|)^2\bigg)^{p/2}dr\bigg)^{1/p}\\
&\qquad \qquad \times \bigg(\int_0^{\infty} \bigg(\sum_{m=0}^\infty\sum_{j=1}^{d(m)}|\overline{\widetilde{g}_{m,j}}(r)|^2\bigg)^{p'/2}dr\bigg)^{1/p'}\\
&\le C_0(\beta) \bigg(\int_0^{\infty} \bigg(\sum_{m=0}^\infty\sum_{j=1}^{d(m)}|\widetilde{f}_{m,j}(r)|^2\bigg)^{p/2}dr\bigg)^{1/p}=C_0(\beta),
\end{align*}
where in the last inequality we applied Theorem \ref{Th:MVvine}. Similarly, by \eqref{nuc2} in Proposition \ref{prop:Compkernel}, we get the estimate
$$|\mathcal{T}_{N,m}^{\ve,1/2+i\beta}\tilde{f}_{m,j}(r)| \leq C_1(\beta) \sup_{N}|\mathbb{S}_N^{\dl(i\beta+\ve /2), d+1} \tilde{f}_{m,j}(r)|.$$
Then, an analogous reasoning with proper modifications, leads us to $|F_N(\tfrac12+i\bt)|\leq C_1(\bt)$.

Observe that with the estimates just proven, we apply Lemma \ref{SW} to our function $F_N$, so that right hand side of \eqref{eq:3ll} is finite, therefore ensuring the boundedness of $F_N(\gm)$, for $0<\gm<1/2$.
\end{proof}

We recall the following result that corresponds with an idea of J. L. Rubio de Francia tacitly contained in \cite[Remark (a)]{RF}. Here we stated it as a theorem.

\begin{thm}[Rubio de Francia] \label{th:RdF}
Let $T:L^p(\R^d)\to L^p(\R^d)$ be a bounded linear operator which is rotation invariant, i.e. it commutes with the action of the rotation group $ SO(d) $ on $ L^p(\R^d).$ Then, $T$ is also bounded in $L^{p}(\ell^2)$.
\end{thm}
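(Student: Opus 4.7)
The plan is to establish this by the Marcinkiewicz--Zygmund randomisation technique: linearise the $\ell^2$-norm via Khintchine's inequality and then transfer the scalar $L^p$-bound for $T$ to the vector-valued setting using the linearity of $T$.

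First I would truncate to a finite family $\{f_k\}_{k=1}^N\subset L^p(\R^d)$ and introduce independent Rademacher signs $\{\varepsilon_k\}_{k=1}^N$ on an auxiliary probability space $\Omega$. Applied pointwise in $x$, Khintchine's inequality yields
$$A_p\Big(\sum_{k=1}^N|f_k(x)|^2\Big)^{1/2}\le\Big(\mathbb{E}_\varepsilon\Big|\sum_{k=1}^N\varepsilon_k f_k(x)\Big|^p\Big)^{1/p}\le B_p\Big(\sum_{k=1}^N|f_k(x)|^2\Big)^{1/2},$$
and the analogous bounds with $\{Tf_k\}$ in place of $\{f_k\}$. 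Raising to the $p$-th power, integrating in $x$, and exchanging the order of integration by Tonelli produces a two-sided equivalence between $\big\|(\sum_k|f_k|^2)^{1/2}\big\|_p^p$ and $\mathbb{E}_\varepsilon\big\|\sum_k\varepsilon_k f_k\big\|_p^p$, with $p$-dependent constants.

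Next I would use the linearity of $T$ to commute it past the Rademacher sum, $T(\sum_k\varepsilon_k f_k)=\sum_k\varepsilon_k Tf_k$, and apply the scalar $L^p$-boundedness realisation by realisation, obtaining $\big\|\sum_k\varepsilon_k Tf_k\big\|_p\le\|T\|_{L^p\to L^p}\big\|\sum_k\varepsilon_k f_k\big\|_p$. Raising to the $p$-th power, averaging over $\Omega$, and chaining with both Khintchine bounds yields
$$\Big\|\Big(\sum_{k=1}^N|Tf_k|^2\Big)^{1/2}\Big\|_p\le\frac{B_p}{A_p}\,\|T\|_{L^p\to L^p}\,\Big\|\Big(\sum_{k=1}^N|f_k|^2\Big)^{1/2}\Big\|_p,$$
with a constant independent of $N$. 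Passage to countable $\ell^2$-sequences then follows by monotone convergence.

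There is essentially no obstacle beyond Khintchine's inequality; the argument is elementary. A noteworthy feature is that rotation invariance plays no explicit role in this proof: the statement as formulated is a special case of the classical Marcinkiewicz--Zygmund theorem and holds for any bounded linear operator on $L^p(\R^d)$. The hypothesis appears naturally in the Rubio de Francia framework of \cite{RF}, where the conclusion is embedded in a wider extrapolation scheme that uses rotation invariance to reduce $L^p$-bounds to weighted ones by averaging over $SO(d)$; for the restricted statement needed in our application the Khintchine-based argument above suffices.
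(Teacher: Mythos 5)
Your Khintchine/Marcinkiewicz--Zygmund argument is a correct proof of the statement as literally written: for $p=q$ the chain
$$A_p\Big\|\Big(\sum_k|Tf_k|^2\Big)^{1/2}\Big\|_p\le\Big(\mathbb{E}\Big\|\sum_k\varepsilon_k Tf_k\Big\|_p^p\Big)^{1/p}\le\|T\|\Big(\mathbb{E}\Big\|\sum_k\varepsilon_k f_k\Big\|_p^p\Big)^{1/p}\le B_p\,\|T\|\,\Big\|\Big(\sum_k|f_k|^2\Big)^{1/2}\Big\|_p$$
is complete (no Kahane-type reverse inequality is needed since the two exponents coincide), and the passage to infinite sequences is routine. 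Note that the paper supplies no proof at all --- the theorem is recalled with a pointer to Remark (a) of \cite{RF} --- and that, for the componentwise reading you adopt, the conclusion is also an immediate special case of Krivine's theorem (Theorem \ref{Kr}) already stated in the paper. So as far as the literal statement goes, your argument is fine and more self-contained than either citation.

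The caveat concerns your closing claim that rotation invariance plays no role. That is true for the componentwise extension $\{f_k\}\mapsto\{Tf_k\}$, but it is not the inequality the paper actually extracts from Theorem \ref{th:RdF}. In the proof of Theorem \ref{Th:Vvine} (cases $\gm=0$ and $\gm=\tfrac12$) the conclusion drawn is the vector-valued bound for the family $\{T^{\dl,\gm}_{N,m}\}_m$ of \emph{distinct} operators indexed by the spherical harmonic degree $m$, which is equivalent to the mixed-norm bound $\|\si_N^{\dl}f\|_{(p,2)}\le C\|f\|_{(p,2)}$; the componentwise extension alone does not yield this. Rotation invariance is precisely what bridges the two: since $T$ commutes with rotations, $\int_{\sd}|Tf(rx')|^2\,d\si(x')$ equals a constant times $\int_{SO(d)}|T(f\circ\rho)(x)|^2\,d\rho$ for any $x$ with $|x|=r$, and the function $x\mapsto\big(\int_{SO(d)}|T(f\circ\rho)(x)|^2\,d\rho\big)^{1/2}$ is radial by the invariance of Haar measure; hence $\|Tf\|_{(p,2)}$ is comparable to the $L^p(\R^d;L^2(SO(d)))$ norm of the continuous family $\{T(f\circ\rho)\}_{\rho}$, to which your Marcinkiewicz--Zygmund inequality (with $\ell^2$ replaced by $L^2(SO(d))$, which the randomization argument accommodates) applies and returns $\|f\|_{(p,2)}$. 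So your proof is the correct second half of the intended argument; discarding the rotation-invariance hypothesis as superfluous would leave the theorem too weak for the use made of it in Section \ref{subsec:Mainproofs}.
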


We are ready to prove Theorem \ref{Th:Vvine}.

\begin{proof}[Proof of Theorem \ref{Th:Vvine}]
We first observe that when $\gm=0$ the Dunkl--Hermite expansion reduces to the standard Hermite expansion and hence Theorem
\ref{Th:Vvine} is true. Indeed, we have $L^p(\R^d)$ boundedness of $\si_N^{\dl}$ for any $\dl>\frac{d-1}{2} $, $1\leq p <\infty $
(see \cite{ST1}). On the other hand, it is easy to check that the Ces\`{a}ro kernels associated to standard Hermite expansions are
rotation invariant. Then, from Theorem \ref{th:RdF}, we immediately get the desired vector-valued extension. When $ \gm
= \frac12$, the same reasoning holds by considering Ces\`{a}ro means $\si_N^{\dl, d+1}$. Hence we can restrict ourselves to the
case $0<\gm<\frac12$.

Given $ \dl > \frac{d+2\gm-1}{2} $ we can choose $ \ve > 0 $ so that $ \dl = \frac{d+2\gm+2\ve-1}{2} = \dl(\gamma+\ve)$, which is defined  in \eqref{zetaC}. It suffices to prove that, for $ p >
2 $,
 $$\Big\|\Big(\sum_{m=0}^\infty\sum_{j=1}^{d(m)}|T_{N,m}^{\dl(\gm+\ve),\gm}f_{m,j}(r)|^2\Big)^{1/2}
\Big\|_{L^p(\R^+, r^{d+2\gamma-1}dr)}\le C\Big\|\Big(\sum_{m=0}^\infty\sum_{j=1}^{d(m)}|f_{m,j}(r)|^2\Big)^{1/2}\Big\|_{L^p(\R^+,  r^{d+2\gamma-1}dr)}.$$
We can write
\begin{align*}
\int_0^{\infty}\bigg(\sum_{m=0}^\infty&\sum_{j=1}^{d(m)}T_{N,m}^{\dl(\gm+\ve),\gamma} f_{m,j}(r)\overline{g_{m,j}}(r) \bigg)r^{d+2\gamma-1}\,dr\\
&=\int_0^{\infty}\bigg(\sum_{m=0}^\infty\sum_{j=1}^{d(m)}\int_0^{\infty}K_{N,m}^{\dl(\gm+\ve),\gm}(r,s)f_{m,j}(s)s^{d+2\gamma-1}\,ds\, \overline{g_{m,j}}(r) \bigg)r^{d+2\gamma-1}\,dr\\
&=\int_0^{\infty}\bigg(\sum_{m=0}^\infty\sum_{j=1}^{d(m)}\int_0^{\infty}r^{\frac{2\dl(\gm)}{p}}s^{\frac{2\dl(\gm)}{p'}}
K_{N,m}^{\dl(\gm+\ve),\gm}(r,s)f_{m,j}(s)s^{\frac{2\dl(\gm)}{p}}\,ds\,\overline{g_{m,j}}(r) \bigg)r^{\frac{2\dl(\gm)}{p'}}\,dr\\
&=\int_0^{\infty}\bigg(\sum_{m=0}^\infty\sum_{j=1}^{d(m)}\int_0^{\infty} \mathcal{K}_{N,m}^{\ve,\gm}(r,s)\widetilde{f}_{m,j}(s)\,ds\, \overline{\widetilde{g}_{m,j}}(r) \bigg)\,dr\\
&=\int_0^{\infty}\bigg(\sum_{m=0}^\infty\sum_{j=1}^{d(m)}T_{N,m}^{\ve,\gm} \widetilde{f}_{m,j}(r)\overline{\widetilde{g}_{m,j}}(r) \bigg)dr=F_N(\gm),
\end{align*}
and the last one is bounded, by Proposition \ref{prop:estimate}. By an argument of duality, the estimate is valid for all $1<p<\infty$.
\end{proof}

%----------------------------------------------------------------------------
\section{Proofs of Propositions \ref{lem:hProj}, \ref{prop:relKernels} and \ref{prop:Compkernel}}
\label{sec:propositions}
%----------------------------------------------------------------------------
\begin{proof}[Proof of Proposition \ref{lem:hProj}]
By integrating Mehler's formula for the Dunkl--Hermite functions \eqref{eq:MehlerGen1} against $ Y_{m,\ell}(y') $
and using the Funk--Hecke formula in Corollary \ref{eq:Hecke} we get
\begin{multline} \label{eq:MehlerGenInt}
\sum_{N=0}^\infty w^N \int_{\sd}\Phi^{(d)}_{N,\K}(rx', sy')Y_{m,\ell}(y')h_{\K}^2(y')\,d\si(y')\\
= \frac{2(1-w^2)^{-d/2-\gm}}{\sqrt{\pi}\Gm(\frac{d-1}{2}+\gm)}e^{-\frac{1}{2}
\big (\frac{1+w^2}{1-w^2} \big )(r^2+s^2)}\bigg(\int_{-1}^1e^{\frac{2rsw}{1-w^2}u}P_m^{\la}(u)(1-u^2)^{\la-\frac12}\,du\bigg) Y_{m,\ell}(x').
\end{multline}
Now, comparing the right hand side of the above with the Mehler's formula for the Hermite expansions \eqref{eq:Mehler}, we conclude that
\begin{multline}\label{eq:MehlerInt}
\sum_{N=0}^\infty w^N \int_{\sd}\Phi^{(d)}_{N,\K}(rx', sy')Y_{m,\ell}(y')h_{\K}^2(y')\,d\si(y')\\
=\frac{2 \pi^{(d-1)/2}(1-w^2)^{-\gm}}{\Gm\big(\frac{d-1}{2}+\gm \big)}\sum_{N=0}^{\infty}\bigg(\int_{-1}^1
 \Phi^{(d)}_{N}(r,s;u)P_m^{\la}(u)(1-u^2)^{\la-\frac12}\,du\bigg)w^N\, Y_{m,\ell}(x').
\end{multline}
By multiplying the identity \eqref{eq:MehlerInt} by  $(1-w)^{-\dl-1} $  and using the expansion
$(1-w)^{-\dl-1}=\sum_{N=0}^{\infty}A_N^{\dl}w^N $, we obtain
\begin{align*}
\sum_{N=0}^\infty &w^N \sum_{j=0}^N A_{N-j}^{\dl} \left(\int_{\sd}\Phi^{(d)}_{j,\K}(rx',
sy')Y_{m,\ell}(y')h_{\K}^2(y')\,d\si(y')\right)\\
&=c_{d,\gm}(1-w^2)^{-\gm}\sum_{N=0}^{\infty}w^N\sum_{j=0}^N A_{N-j}^{\dl}\bigg(\int_{-1}^1\Phi^{(d)}_{j}(r,s;u)P_m^{\la}(u)(1-u^2)^{\la-\frac12}\,du \bigg) Y_{m,\ell}(x')\\
&=c_{d,\gm}(1-w^2)^{-\gm}\sum_{N=0}^{\infty}w^N A_N^{\dl} \bigg(\int_{-1}^1\si_{N}^{\dl,d} (r,s;u)P_m^{\la}(u)(1-u^2)^{\la-1/2}\,du \bigg)Y_{m,\ell}(x').
\end{align*}
Since $\displaystyle (1-w^2)^{-\gm}= \sum_{N=0}^{\infty} A_N^{\gm-1} w^{2N}$, the above can be written as
\begin{align*}
\sum_{N=0}^\infty &w^N \sum_{j=0}^N A_{N-j}^{\dl}\bigg(\int_{\sd}\Phi^{(d)}_{j,\K}(rx',
sy')Y_{m,\ell}(y')h_{\K}^2(y')\,d\si(y')\bigg)\\
&=c_{d,\gm}\sum_{N=0}^{\infty}w^N\sum_{j=0}^{[N/2]}A_j^{\gm-1}A_{N-2j}^{\dl} \bigg(\int_{-1}^1\si_{N-2j}^{\dl,d} (r,s;u)P_m^{\la}(u)(1-u^2)^{\la-1/2}\,du\bigg)Y_{m,\ell}(x').
\end{align*}
Comparing the coefficients of $w^N$ on both sides in the above and in view of \eqref{eq:CDHKernel} we obtain \eqref{HBCDHd}.

Observe that the right hand side of  \eqref{eq:MehlerGenInt} remains the same if we replace $ d $ by $ d+1 $ and $ \gamma $ by $ \gamma-\frac{1}{2}$. Consequently, in view of Mehler's formula for the Hermite expansions \eqref{eq:Mehler}, the right hand side of  \eqref{eq:MehlerGenInt} is also equal to
\begin{equation*}
%\label{eq:MehlerInt1}
\frac{2 \pi^{d/2} (1-w^2)^{-\gm+\frac{1}{2}}}{\Gm\big(\frac{d-1}{2}+\gm \big)}\sum_{N=0}^{\infty}\bigg(\int_{-1}^1
 \Phi_{N}^{(d+1)}(r,s;u)P_m^{\la}(u)(1-u^2)^{\la-\frac12}\,du\bigg)w^N\, Y_{m,\ell}(y').
\end{equation*}
Therefore, by repeating an analogous reasoning, we also arrive at \eqref{HBCDHdp1}.

This completes the proof of the proposition.
\end{proof}
%-----------------------------------------------------------------------------------------------------

\begin{proof}[Proof of Proposition \ref{prop:relKernels}]
We integrate Mehler's formula for $\Phi_N(x,y)$ in \eqref{eq:Mehler} against the function $P_m^{\la}(u)(1-u^2)^{\la-1/2}$ over $(-1,1)$, and apply Lemma \ref{lem:UltraBessel} with $z=\frac{2w}{1-w^2}rs$, so that we get
\begin{align*}
\sum_{N=0}^\infty w^N\int_{-1}^1&\Phi_N(r,s;u)P_m^{\la}(u)(1-u^2)^{\la-1/2}\,du\\
&=\pi^{-\frac{d}{2}} (1-w^2)^{-\frac{d}{2}}e^{-\frac{1}{2}\big (\frac{1+w^2}{1-w^2} \big )(r^2+s^2)}\int_{-1}^1P_m^{\la}(u)(1-u^2)^{\la-1/2}e^{zu}\,du\\
&=\pi^{-(d-1)/2}\Gm\big(\la+1/2) (1-w^2)^{-\frac{d}{2}}e^{-\frac{1}{2} \big (\frac{1+w^2}{1-w^2} \big )(r^2+s^2)}\frac{I_{\la+m}(z)}{(z/2)^{\la}}.
\end{align*}
Comparing this with the generating function identity  for the Laguerre functions \eqref{eq:generatingLag} we obtain
\begin{multline}\label{eq:equalingHL}
\sum_{N=0}^\infty w^N\int_{-1}^1\Phi_N(r, s; u)P_m^{\la}(u)(1-u^2)^{\la-1/2}\,du\\
=\frac{\Gm(\la+1/2)}{2\pi^{(d-1)/2}} (1-w^2)^{\gm}(wrs)^m\sum_{N=0}^{\infty}\psi_{N}^{\la+m}(r)\psi_{N}^{\la+m}(s)w^{2N}.
\end{multline}

On the one hand, multiplying $\sum_{N=0}^\infty w^N\Phi_N(r,s;u)$ by $(1-w)^{-\dl-1}$, we have
\begin{equation}\label{eq:2}
(1-w)^{-\delta-1}\Big(\sum_{N=0}^\infty w^N\Phi_N(r,s;u)\Big)= \sum_{N=0}^{\infty}\Big(\sum_{j=0}^{N}A_{N-j}^{\dl}\Phi_{j}(r,s;u)\Big)w^N= \sum_{N=0}^{\infty}A_{N}^{\dl}\si_{N}^{\dl}(r,s;u)w^N.
\end{equation}
On the other hand, we also have
\begin{equation}\label{eq:1}
(1-w)^{-\delta-1}\bigg(\sum_{N=0}^{\infty} \psi_{N}^{\la+m}(r)\psi_{N}^{\la+m}(s)w^{2N}\bigg) =\sum_{N=0}^{\infty}\Big(\sum_{j=0}^{[N/2]} A_{N-2j}^{\delta} \psi_{j}^{\la+m}(r)\psi_{j}^{\la+m}(s)\Big)w^{N}.
\end{equation}
Therefore, in view of \eqref{eq:2} and \eqref{eq:1}, from \eqref{eq:equalingHL} we get
\begin{multline*}
\sum_{N=0}^{\infty}A_{N}^{\delta}\Big(\int_{-1}^{1}\si_{N}^{\dl}(r,s;u)P_m^{\la}(u) (1-u^2)^{\la-1/2}\,du\Big)w^{N}\\
=\frac{\Gm(\la+1/2)}{2\pi^{(d-1)/2}} (1-w^2)^{\gm}(wrs)^m \sum_{N=0}^{\infty}\Big(\sum_{j=0}^{[N/2]}A_{N-2j}^{\delta}
\psi_{j}^{\la+m}(r)\psi_{j}^{\la+m}(s)\Big)w^{N}.
\end{multline*}
Now, we multiply both sides by  $(1-w^2)^{-\gm}$ and rearrange to get
\begin{align*}
%\label{eq:3} \notag
\sum_{N=0}^{\infty}\Big(\sum_{j=0}^{[N/2]}A_{j}^{\gm-1}& A_{N-2j}^{\delta}\int_{-1}^{1}\si_{N-2j}^{\dl}
(r,s;u)P_m^{\la}(u)(1-u^2)^{\la-1/2}\,du\Big)w^{N}\\
\notag &=\frac{\Gm(\la+1/2)}{2\pi^{(d-1)/2}} \sum_{N=0}^{\infty}\bigg(\sum_{j=0}^{[N/2]}A_{N-2j}^{\delta}(rs)^m
\psi_{j}^{\la+m}(r)\psi_{j}^{\la+m}(s)\bigg)w^{N+m}\\
&=\frac{\Gm(\la+1/2)}{2\pi^{(d-1)/2}} \sum_{N=m}^{\infty}\bigg(\sum_{j=0}^{[(N-m)/2]}A_{N-m-2j}^{\delta}(rs)^m
\psi_{j}^{\la+m}(r)\psi_{j}^{\la+m}(s)\bigg)w^{N}.
\end{align*}
Equating  the coefficients of $w^{N}$ on both sides, by \eqref{eq:alterKernel} we see that
$$K_{N,m}^{\dl,\gm}(r,s)=\frac{1}{A_{N}^{\dl}}\sum_{j=0}^{[(N-m)/2]}A_{N-m-2j}^{\delta}(rs)^m
\psi_j^{\la+m}(r)\psi_j^{\la+m}(s)$$
for $N\geq m$. It is clear that  $K_{N,m}^{\dl,\gm}(r,s)=0 $ for $ N < m$. This completes the proof of the proposition.

Repeating the same procedure, but starting with Mehler's formula \eqref{eq:Mehler} for $\Phi_N^{(d+1)}(x,y)$, $x,y\in \R^{d+1}$, we obtain the proposition for the kernel $K_{N,m}^{\dl,\gm}(r,s)$ expressed as in \eqref{eq:alterKernel1}.
\end{proof}

\begin{proof}[Proof of Proposition \ref{prop:Compkernel}]

Let $Q_m^{\lambda(\zeta)}(u)$ be as in Lemma \ref{lem:Q}, where $\lambda(\zeta)$ is taken as in \eqref{lambdaC}. We
integrate Mehler's formula for $\Phi_N(x,y)$ in \eqref{eq:Mehler} against $Q_m^{\la(i\beta)}(u)$, and use \eqref{eq:BesselUltra} with $z=\frac{2w}{1-w^2}rs$, so that
\begin{align*}
\sum_{N=0}^\infty w^N\int_{-1}^1\Phi_N(r,s;u)&Q_m^{\la(i\beta)}(u)\,du\\
&=\pi^{-\frac{d}{2}}(1-w^2)^{-\frac{d}{2}}e^{-\frac{1}{2} \big (\frac{1+w^2}{1-w^2} \big )(r^2+s^2)}\int_{-1}^1Q_m^{\la(i\beta)}(u)e^{zu}\,du\\
&=\frac{\Gm((d-1)/2)}{2\pi^{(d-1)/2}} (1-w^2)^{-\frac{d}{2}}e^{-\frac{1}{2} \big (\frac{1+w^2}{1-w^2} \big )(r^2+s^2)}\frac{I_{\la(i\beta)+m}(z)}{(z/2)^{\la(i\beta)}}.
\end{align*}
From this and \eqref{eq:generatingLag} we have
\begin{multline}\label{eq:4}
\sum_{N=0}^\infty w^N\int_{-1}^1\Phi_N(r,s;u)Q_m^{\la(i\beta)}(u)\,du\\
=\frac{\Gm((d-1)/2)}{4\pi^{(d-1)/2}}(1-w^2)^{i\beta}\sum_{N=0}^{\infty}(rs)^m \psi_{N}^{\la(i\beta)+m}(r)\psi_{N}^{\la(i\beta)+m}(s)w^{2N+m}.
\end{multline}
Now we follow a similar procedure used in the proof of Proposition \ref{prop:relKernels} but with modifications. Namely, instead of multiplying both sides by $(1-w^2)^{-\gm}(1-w)^{-\dl-1} $ we multiply by the factor $(1-w^2)^{-i\bt}(1-w)^{-\dl(i\bt+\ve)-1} $, which we rewrite as $(1-w^2)^{-\tfrac{\ve}{2}-i\bt}(1-w)^{-\dl(i\bt+\frac{\ve}{2})-1}(1+w)^{\frac{\ve}{2}}$, where $\dl(\zeta)$ is as in \eqref{zetaC}. Then, left hand side of \eqref{eq:4} delivers
\begin{multline*}
(1-w^2)^{-\tfrac{\ve}{2}-i\bt}(1-w)^{-\dl(i\bt+\frac{\ve}{2})-1}(1+w)^{\frac{\ve}{2}}\sum_{N=0}^\infty
w^N\int_{-1}^1 \Phi_N(r,s;u)Q_m^{\la(i\beta)}(u)\,du\\
=(1+w)^{\frac{\ve}{2}}\sum_{N=0}^{\infty}w^N \Big(\sum_{j=0}^{[N/2]} A_{j}^{\frac{\ve}{2}+i\bt-1}A_{N-2j}^{\dl(i\beta+\frac{\ve}{2})} \int_{-1}^{1}\si_{N-2j}^{\dl(i\beta+\frac{\ve}{2})}(r,s;u)Q_{m}^{\la(i\beta)}(u)\,du\Big)
\end{multline*}
On the other hand, from the right hand side of \eqref{eq:4} we get
\begin{align*}
C_d^{-1}(1-w^2)^{-\tfrac{\ve}{2}}&(1-w)^{-\dl(i\bt+\tfrac{\ve}{2})-1}(1+w)^{\tfrac{\ve}{2}}\sum_{N=0}^{\infty}(rs)^m
 \psi_{N}^{\la(i\beta)+m}(r)\psi_{N}^{\la(i\beta)+m}(s)w^{2N+m}\\
 &=C_d^{-1}(1-w)^{-\dl(i\bt+\ve)-1}\sum_{N=0}^{\infty}(rs)^m \psi_{N}^{\la(i\beta)+m}(r)\psi_{N}^{\la(i\beta)+m}(s)w^{2N+m}\\
 &=C_d^{-1} \sum_{N=m}^{\infty}\bigg(\sum_{j=0}^{[(N-m)/2]}A_{N-m-2j}^{\dl(i\bt+\ve)}(rs)^m \psi_{j}^{\la(i\bt)+m}(r)\psi_{j}^{\la(i\bt)+m}(s)\bigg)w^{N}\\
 &=C_d^{-1}\sum_{N=m}^{\infty}w^N A_N^{\dl(i\beta+\ve)}K_{N,m}^{\dl(i\beta+\ve), i\bt}(r, s),
\end{align*}
where the last equality is true  in view of Proposition \ref{prop:relKernels}. Altogether, we have
\begin{multline*}
\sum_{N=0}^{\infty}w^N \Big(\sum_{j=0}^{[N/2]} A_{j}^{\tfrac{\ve}{2}+i\bt-1}A_{N-2j}^{\dl(i\beta+\tfrac{\ve}{2})}
\int_{-1}^{1}\si_{N-2j}^{\dl(i\beta+\tfrac{\ve}{2})}(r,s;u)Q_{m}^{\la(i\beta)}(u)\,du\Big)\\
=\frac{(1+w)^{-\tfrac{\ve}{2}}}{C_d}\sum_{N=m}^{\infty}w^N A_N^{\dl(i\beta+\ve)}K_{N,m}^{\dl(i\beta+\ve), i\bt}(r, s).
\end{multline*}
Multiplying both sides above by  $r^{\frac{2\dl(i\beta)}{p}}s^{\frac{2\dl(i\beta)}{p'}}(1+w)^{\ve/2} $ and using $
(1+w)^{\ve/2}=\sum_{j=0}^{\infty}\binom{\ve/2}{j}w^j $, we see that
\begin{multline*}
\sum_{N=0}^{\infty}w^N\bigg(\sum_{k=0}^N\binom{\ve/2}{N-k}\sum_{j=0}^{[k/2]}
 A_{j}^{\tfrac{\ve}{2}+i\bt-1}A_{k-2j}^{\dl(i\beta+\tfrac{\ve}{2})}  r^{\frac{2\dl(i\beta)}{p}}s^{\frac{2\dl(i\beta)}{p'}}
\int_{-1}^{1}\si_{k-2j}^{\dl(i\beta+\tfrac{\ve}{2})}(r,s;u)Q_{m}^{\la(i\beta)}(u)\,du \bigg)\\
=\frac{1}{C_d}\sum_{N=m}^{\infty}w^N A_N^{\dl(i\bt+\ve)}\mathcal{K}_{N,m}^{\varepsilon,i\beta}(r, s).
\end{multline*}
Equating the coefficients of $w^N$ on both sides, we get \eqref{nuc1}.

Similarly, by using the  Mehler's formula for $\Phi_N^{(d+1)}(x,y)$, $x,y\in \R^{d+1}$, we have
\begin{multline*}
\sum_{N=0}^\infty w^N\int_{-1}^1\Phi_N^{(d+1)}(r,s;u)Q_m^{\la(1/2+i\beta)}(u)\,du\\
 = \frac{\Gm(d/2)}{4\pi^{d/2}} (1-w^2)^{i\beta}\sum_{N=0}^{\infty}(rs)^m  \psi_{N}^{\la(1/2+i\beta)+m}(r)\psi_{N}^{\la(1/2+i\beta)+m}(s)w^{2N+m}.
 \end{multline*}
Proceeding as above with this identity we obtain \eqref{nuc2}.

This completes the proof of the proposition.
\end{proof}
%----------------------------------------------------------------------------
\section{Technical results}
\label{sec:technical}
%----------------------------------------------------------------------------

%----------------------------------------------------------------------------
\subsection{A lemma concerning binomial coefficients with complex parameters}
\label{subsec:binomial-complex}
%----------------------------------------------------------------------------

The estimate contained in the following lemma is used in the proof of Theorem \ref{maximal}. Moreover, a slightly different version of such lemma (whose explicit statement and proof are omitted) is also used in the proof of Proposition \ref{prop:estimate}.

\begin{lem}
\label{lem:bin.est} For $\zeta\in \C$, let $\dl(\zeta)$ be defined as in \eqref{zetaC}. For any $ \ve > 0 $ and $ \beta \in \R $ we have the estimate
$$ \frac{1}{\big |A_N^{\dl(i\bt+\varepsilon)} \big |} \sum_{j=0}^N\big |A_{N-j}^{i\bt+\frac{\varepsilon}{2}-1} \big | A_j^{\dl(\varepsilon/2)} \leq C_{\varepsilon} (1+|\bt|) \cosh (\pi \beta) $$
where $ C_\ve $ is independent of $ N.$
\end{lem}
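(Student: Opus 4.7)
The plan is to bound the sum on the left in three stages: (i) replace the complex binomial coefficient $A_{N-j}^{i\beta+\varepsilon/2-1}$ by a constant (depending only on $\beta$ and $\varepsilon$) times the real counterpart $A_{N-j}^{\varepsilon/2-1}$; (ii) collapse the resulting real sum using the convolution identity \eqref{generating} specialised to $\beta=0$; and (iii) compare the denominator $|A_N^{\delta(i\beta+\varepsilon)}|$ with $A_N^{\delta(\varepsilon)}$ via an explicit product representation.

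For step (i), I would write $A_{N-j}^{i\beta+\varepsilon/2-1} = \Gamma(N-j+\varepsilon/2+i\beta)/[\Gamma(N-j+1)\Gamma(\varepsilon/2+i\beta)]$ and apply the elementary inequality $|\Gamma(x+iy)| \leq \Gamma(x)$ (valid for $x>0$, since $|t^{iy}|=1$ in the integral representation of $\Gamma$) to the numerator, which yields
$$|A_{N-j}^{i\beta+\varepsilon/2-1}| \leq \frac{\Gamma(\varepsilon/2)}{|\Gamma(\varepsilon/2+i\beta)|}\,A_{N-j}^{\varepsilon/2-1}.$$
The prefactor is then controlled using the reflection formula $\Gamma(z)\Gamma(1-z) = \pi/\sin(\pi z)$ at $z=\varepsilon/2+i\beta$. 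Combining this identity with the trivial bound $|\Gamma(1-\varepsilon/2-i\beta)| \leq \Gamma(1-\varepsilon/2)$ and the standard formula $|\sin(\pi\varepsilon/2+i\pi\beta)|^2 = \sin^2(\pi\varepsilon/2) + \sinh^2(\pi\beta) \leq \cosh^2(\pi\beta)$ (the last inequality since $\sin^2 a \leq 1 = \cosh^2 b - \sinh^2 b$) gives
$$\frac{\Gamma(\varepsilon/2)}{|\Gamma(\varepsilon/2+i\beta)|} \leq \frac{\cosh(\pi\beta)}{\sin(\pi\varepsilon/2)}.$$

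For step (ii), setting $\beta=0$ in \eqref{generating} produces $A_N^{\delta(\varepsilon)} = \sum_{j=0}^N A_{N-j}^{\varepsilon/2-1}A_j^{\delta(\varepsilon/2)}$, and combining with step (i) yields
$$\sum_{j=0}^N |A_{N-j}^{i\beta+\varepsilon/2-1}|\,A_j^{\delta(\varepsilon/2)} \leq \frac{\cosh(\pi\beta)}{\sin(\pi\varepsilon/2)}\,A_N^{\delta(\varepsilon)}.$$
For step (iii), the Pochhammer-type factorisation $A_N^\sigma = (N!)^{-1}\prod_{k=1}^N(k+\sigma)$ gives
$$\frac{A_N^{\delta(\varepsilon)}}{|A_N^{\delta(i\beta+\varepsilon)}|} = \prod_{k=1}^N \frac{k+\delta(\varepsilon)}{\sqrt{(k+\delta(\varepsilon))^2+\beta^2}} \leq 1.$$
Chaining the three steps produces the bound $\cosh(\pi\beta)/\sin(\pi\varepsilon/2)$, which is in fact stronger than the stated estimate $C_\varepsilon(1+|\beta|)\cosh(\pi\beta)$.

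The main technical point is securing the correct $\cosh(\pi\beta)$ growth in step (i); here the reflection formula is the natural tool, since it converts the small real part $\varepsilon/2$, where bounding $|\Gamma(\varepsilon/2+i\beta)|$ from below directly is awkward, into the companion factor $1-\varepsilon/2$, for which the crude estimate $|\Gamma(x+iy)| \leq \Gamma(x)$ is perfectly sharp for the purpose at hand.
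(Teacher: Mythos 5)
Your argument is correct in substance and, in its second half, genuinely cleaner and sharper than the proof in the paper. Steps (i) and (ii) coincide with what we do: we also apply $|\Gm(x+iy)|\le \Gm(x)$ termwise (our \eqref{gammaFunc}) and then use the convolution identity \eqref{binomIdent} to collapse the real sum, arriving at the same intermediate bound $\frac{\Gm(\ve/2)}{|\Gm(\ve/2+i\bt)|}A_N^{\dl(\ve)}$. The divergence is in how the two remaining quotients are handled. For the prefactor $\Gm(\ve/2)/|\Gm(\ve/2+i\bt)|$ we pass through two Beta-function estimates and only invoke the reflection formula at the point $\tfrac12+i\bt$, which is what generates the extra factor $(1+|\bt|)$ in the statement; you apply the reflection formula directly at $\ve/2+i\bt$ and obtain $\cosh(\pi\bt)/\sin(\pi\ve/2)$ in one stroke. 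For the ratio $A_N^{\dl(\ve)}/\big|A_N^{\dl(i\bt+\ve)}\big|$ we again argue through quotients of Gamma functions (picking up another $|\Gm(1+i\bt)|^{-1}$ along the way), whereas your Pochhammer factorization $A_N^{\sigma}=(N!)^{-1}\prod_{k=1}^N(k+\sigma)$ shows at a glance that this ratio is at most $1$, each factor being $(k+\dl(\ve))/\sqrt{(k+\dl(\ve))^2+\bt^2}$. The net result is the stronger estimate $\cosh(\pi\bt)/\sin(\pi\ve/2)$, free of any polynomial factor in $\bt$, which is still of admissible growth and serves every application of the lemma.

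One caveat: your step (i) relies on $|\Gm(1-\ve/2-i\bt)|\le \Gm(1-\ve/2)$ and on $\Gm(\ve/2)\Gm(1-\ve/2)=\pi/\sin(\pi\ve/2)$, both of which require $0<\ve<2$; for $\ve\ge 2$ the quantity $\Gm(1-\ve/2)$ is negative or undefined and the argument breaks down. The lemma is stated, and in the proof of Theorem \ref{Th:Vvine} effectively used, for arbitrary $\ve>0$, since there $\ve=\dl-\tfrac{d+2\gm-1}{2}$ can be large. The gap closes easily: the functional equation $\Gm(z+1)=z\Gm(z)$ gives $|\Gm(\ve/2+i\bt)|\ge \tfrac{\Gm(\ve/2)}{\Gm(\theta)}\,|\Gm(\theta+i\bt)|$ when $\ve/2=n+\theta$ with $n\in\Na$ and $\theta\in(0,1)$, reducing the estimate of the prefactor to the range you already treat (and the case $\ve/2\in\Na$ can be handled by the bound $|\Gm(1+i\bt)|^{-2}\le\cosh(\pi\bt)$ used in the paper). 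You should record this reduction explicitly.
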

\begin{proof}
 In order to prove the lemma we make use of the fact that
 \begin{equation} \label{gammaFunc}
 |\Gm(\ap+i\bt)|\leq \Gm(\ap),
 \end{equation}
 for $\ap,\, \bt \in \R,\; \ap>0 $, which follows from the very definition of Gamma function. We also have the Beta function
 $$\frac{\Gm(x)\Gm(y)}{\Gm(x+y)}=\int_0^1(1-t)^{x-1}t^{y-1}\,dt $$
 which leads to the estimate
 \begin{equation}
 \label{beta}\Big | \frac{\Gm(\ap)\Gm(i\bt+1)}{\Gm(\ap+i\bt+1)}\Big |\leq \frac{\Gm(\ap)}{\Gm(\ap+1)}
 \end{equation}
 for any $\ap>0$. We rewrite
 $$\sum_{j=0}^{N}A_{N-j}^{i\bt+\ve/2-1}A_j^{\dl(\ve/2)} = \frac{\Gm(\ve/2)}{\Gm(\ve/2+i\bt)} \sum_{j=0}^N \frac{\Gm(N-j+\ve/2+i\bt)}{\Gm(N-j+1)\Gm(\ve/2)}A_j^{\dl(\ve/2)}. $$
 By \eqref{gammaFunc} and \eqref{binomIdent} we have
 \begin{align*}
 \Big |\sum_{j=0}^{N}A_{N-j}^{i\bt+\ve/2-1}A_j^{\dl(\ve/2)} \Big |&\leq \frac{\Gm(\ve/2)}{|\Gm(\ve/2+i\bt)|}\sum_{j=0}^N
 \frac{\Gm(N-j+\ve/2)}{\Gm(N-j+1)\Gm(\ve/2)}A_j^{\dl(\ve/2)}\\
 &=\frac{\Gm(\ve/2)A_N^{\dl(\ve)}}{|\Gm(\ve/2+i\bt)|}.
 \end{align*}
 Thus, we are left with estimating
$$\bigg|\frac{\Gm(\ve/2)}{\Gm(\ve/2+i\bt)}\;\frac{A_N^{\dl(\ve)}}{A_N^{\dl(\ve+i\bt)}}\bigg| =\bigg|\frac{\Gm(\ve/2)}{\Gm(\ve/2+i\bt)}\; \frac{\Gm(\dl(\ve)+i\bt+1)}{\Gm(\dl(\ve)+1)}\; \frac{\Gm(N+\dl(\ve)+1)}{\Gm(N+\dl(\ve)+i\bt+1)}\bigg|. $$
The middle term in the right hand side is clearly bounded by \eqref{gammaFunc}. The first term can be written as
$$\frac{(\ve/2+i\bt)\Gm(\ve/2)\Gm(i\bt+1)}{\Gm(i\bt+1)\Gm(i\bt+\ve/2+1)} $$
which leads, by \eqref{beta}, to the estimate
$$ \bigg|\frac{\Gm(\ve/2)}{\Gm(\ve/2+i\bt)}\bigg|\le\frac{\Gm(\ve/2)}{\Gm(\ve/2+1)}\frac{(1+|\bt|)} { |\Gm(i\bt+1)|}. $$
Similarly, the third term gives the estimate $|\Gm(i\bt+1)|^{-1} $. Therefore, the expression in Lemma \ref{lem:bin.est} is bounded by
$$\frac{\Gm(\ve/2)}{\Gm(\ve/2+1)}(1+|\bt|)\;|\Gm(i\bt+1)|^{-2}. $$
Once again the expression with the Beta function
$$\frac{\Gm\big(\frac12\big)\Gm\big(\frac12+i\bt\big)}{\Gm(1+i\bt)} =\int_0^1(1-t)^{-\frac12}t^{i\bt-\frac12}\,dt$$
leads to the estimate $|\Gm(i\bt+1)|^{-1}\leq \sqrt{\pi}|\Gm(\tfrac12+i\bt)|^{-1} $. Therefore,
$$|\Gm(i\bt+1)|^{-2}\leq\pi\,\big|\Gm\big(\tfrac12+i\bt\big)\big|^{-2}=\pi\, \big(\Gm(\tfrac12+i\bt)\Gm(\tfrac12-i\bt)\big)^{-1}.$$
In view of the identity $\displaystyle \Gm(z)\Gm(1-z)=\tfrac{\pi}{\sin{\pi z}}$ we obtain
$$|\Gm(i\bt+1)|^{-2}\leq \sin{\pi \big(\tfrac12+i\bt \big)}=\cosh{\pi \bt}. $$
This completes the proof of the lemma.
\end{proof}

%----------------------------------------------------------------------------
\subsection{Some results on ultraspherical polynomials and Bessel functions}
\label{sec:ultras}
%----------------------------------------------------------------------------

We will prove some technical results involving ultraspherical polynomials and modified Bessel
functions with complex parameters. Let us recall several facts concerning both special functions.

Rodrigues' formula for the normalized ultraspherical polynomials takes the form
\begin{equation}\label{eq:Rodrigue}
(1-u^2)^{\la-1/2}P_m^{\la}(u)=\frac{(-1)^m}{2^m (\la+1/2 )_m}\,\frac{d^m}{du^m} \big ((1-u^2)^{m+\la-1/2}\big ), \qquad m=0,1,\ldots,
\end{equation}
where $(\la)_m=\tfrac{\Gamma(\la+m)}{\Gamma(\la)}$ is the Pochhammer symbol. The normalized ultraspherical polynomials are also given by the explicit formula $ P_m^{\la}(\cos\theta)= \frac{C_m^{\la}(\cos\theta)}{C_m^{\la}(1)} $ where (see \cite[page 302]{AAR})
\begin{equation}\label{eq:UltraExp}
C_m^{\la}(\cos\theta)=\sum_{k=0}^{m}\frac{(\la)_k(\la)_{m-k}}{k!(m-k)!}\cos{(m-2k)\theta}
\end{equation}
for $\theta \in [0,\,\pi] $. It is immediate from \eqref{eq:UltraExp} that
\begin{equation}\label{acot}
|P_m^{\la}(u)|\leq 1.
\end{equation}
Since the functions $\la\mapsto (\la)_k $ are holomorphic  for each fixed $u \in [-1,\,1]$, the ultraspherical polynomials $P_m^{\la}(u)$ can be extended as a holomorphic function of $\la$ on the domain $\{\la \in \C: \Re{(\la)}>-\tfrac{1}{2} \}$.

Let $I_{\la}$ be the modified Bessel function of the first kind and order $\la$, defined as
$$  I_\la(z) = \sum_{m=0}^{\infty} \frac{1}{m!\,\Gamma(m+\la+1)} \bigg(\frac{z}{2}\bigg)^{2m+\la}.$$
Note that the functions $I_{\lambda} $ can also be defined for complex values of $\lambda.$ We will use Schl\"afli's  integral representation of Poisson type for modified Bessel function, see \cite[(5.10.22)]{Lebedev}
\begin{equation}\label{eq:ModBessel}
I_{\la}(z)=\frac{(z/2)^{\la}}{\sqrt{\pi}\Gm(\la+1/2)}\int_{-1}^1e^{zu}(1-u^2)^{\la-1/2}du, \quad |\arg z|<\pi, \,\,\, \la>-\frac12.
\end{equation}

The following lemma is useful to express the Dunkl--Ces\`aro mean kernel in terms of the Ces\`{a}ro kernel for the Laguerre expansions in Subsection \ref{LaguerreCon}.

\begin{lem}\label{lem:UltraBessel}
Let $z\in \C$ and $\la>-\frac{1}{2}$. Then the following holds
\begin{equation}\label{eq:UltraBessel}
 \int_{-1}^{1}e^{zu}P_m^{\la}(u)(1-u^2)^{\la-1/2}\,du =\sqrt{\pi}\Gm (\la+1/2 )(z/2 )^{-\la}I_{\la+m}(z), \quad  m=0,1,\ldots.
\end{equation}
\end{lem}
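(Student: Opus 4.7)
My plan is to combine Rodrigues' formula for the normalized ultraspherical polynomials with repeated integration by parts, and then identify the resulting integral as a modified Bessel function via Schläfli's Poisson-type integral representation \eqref{eq:ModBessel}.

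First I would use \eqref{eq:Rodrigue} to rewrite the integrand: for $\lambda > -1/2$,
$$(1-u^2)^{\lambda-1/2}P_m^{\lambda}(u) = \frac{(-1)^m}{2^m (\lambda+1/2)_m}\,\frac{d^m}{du^m}\bigl((1-u^2)^{m+\lambda-1/2}\bigr).$$
Substituting into the left-hand side of \eqref{eq:UltraBessel} and integrating by parts $m$ times transfers the derivatives onto $e^{zu}$. The boundary terms at $u=\pm 1$ involve $(1-u^2)^{m+\lambda-1/2}$ raised to positive exponents (since $\lambda > -1/2$ and we differentiate at most $m-1$ times before exhausting all derivatives), so they all vanish. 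Each integration by parts contributes a factor of $-z$ acting on $e^{zu}$, cancelling the sign $(-1)^m$, so the integral becomes
$$\frac{z^m}{2^m (\lambda+1/2)_m}\int_{-1}^{1} e^{zu}(1-u^2)^{m+\lambda-1/2}\,du.$$

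Next I would apply Schläfli's formula \eqref{eq:ModBessel} with parameter $\lambda$ replaced by $m+\lambda$ (permissible since $m+\lambda > -1/2$), obtaining
$$\int_{-1}^{1} e^{zu}(1-u^2)^{m+\lambda-1/2}\,du = \sqrt{\pi}\,\Gamma(m+\lambda+1/2)\,(z/2)^{-m-\lambda}\,I_{m+\lambda}(z).$$
Putting the pieces together, and using $(\lambda+1/2)_m = \Gamma(m+\lambda+1/2)/\Gamma(\lambda+1/2)$, the prefactor simplifies as
$$\frac{z^m}{2^m}\cdot\frac{\Gamma(\lambda+1/2)}{\Gamma(m+\lambda+1/2)}\cdot\sqrt{\pi}\,\Gamma(m+\lambda+1/2)\,(z/2)^{-m-\lambda} = \sqrt{\pi}\,\Gamma(\lambda+1/2)\,(z/2)^{-\lambda},$$
which yields exactly \eqref{eq:UltraBessel}.

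There is no real obstacle; the only point requiring a brief remark is the justification of Schläfli's representation for complex $z$. The formula \eqref{eq:ModBessel} is stated for $|\arg z| < \pi$, but both sides of \eqref{eq:UltraBessel} are entire functions of $z$ (after canceling the apparent branch point via the power series of $I_{\lambda+m}(z)/(z/2)^\lambda$), so the identity extends from positive real $z$ to all of $\mathbb{C}$ by analytic continuation. The integration-by-parts step is entirely elementary and is the core of the argument.
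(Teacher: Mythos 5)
Your argument is correct and coincides with the paper's own proof: both apply Rodrigues' formula \eqref{eq:Rodrigue}, integrate by parts $m$ times (with vanishing boundary terms since $\la>-\tfrac12$), invoke Schl\"afli's representation \eqref{eq:ModBessel} with order $\la+m$, and simplify via $(\la+1/2)_m=\Gm(\la+m+1/2)/\Gm(\la+1/2)$. The only addition is your (correct, and harmless) remark on extending to complex $z$ by analytic continuation, which the paper leaves implicit.
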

\begin{proof}
 In view of Rodrigues' formula \eqref{eq:Rodrigue}, we get $$\int_{-1}^{1}e^{zu}P_m^{\la}(u)(1-u^2)^{\la-1/2}\,du
=  \frac{(-1)^m}{2^m(\la+1/2)_m}\,\int_{-1}^{1}e^{zu}\frac{d^m}{du^m} ((1-u^2)^{m+\la-1/2} )\,du$$
Integrating by parts and using \eqref{eq:ModBessel} we see that
\begin{align*}
\frac{(-1)^m}{2^m (\la+1/2)_m}\,\int_{-1}^{1}e^{zu}\frac{d^m}{du^m} ((1-u^2)^{m+\la-1/2} )\,du &= \frac{(z/2)^m}{
(\la+1/2)_m}\,\int_{-1}^{1}e^{zu} (1-u^2)^{m+\la-1/2}\,du\\
&=\sqrt{\pi}\Gm (\la+1/2 )(z/2 )^{-\la}I_{\la+m}(z)
\end{align*}
since $\Gm(\la+1/2)=\frac{\Gm(\la+m+1/2)}{(\la+1/2)_m}$. This proves the lemma.
\end{proof}

In order to estimate the Ces\`{a}ro kernels for Dunkl--Hermite expansions, we needed to consider a variant of Lemma \ref{lem:UltraBessel} where the parameter $ \lambda $ has to be taken complex. In the following lemma we express $I_{\lambda+m}$ in terms of certain variants of ultraspherical polynomials defined for complex $\lambda$. Furthermore, we also obtain some good estimates for these variants.

For $u\in[-1,1]$, define
\begin{equation*} \label{eq:Q}
Q_m^{\la,\ve}(u):= \frac{1}{\Gamma(i\bt+\ve)} \int_0^1\chi_{[|u|,\,1]}(s)P_m^{\ap}\Big(\frac{u}{s}\Big) (s^2-u^2)^{\ap-1/2}(1-s)^{i\bt+\varepsilon-1}s^{m+1}\,ds
\end{equation*}
for $ m =0,1,2,\dots$.

\begin{lem}\label{lem:Q}
Let $\varepsilon>0$ and $\la=\ap+i\bt $ with $\ap>\frac{1}{2}$ and $\bt \in \R$.  Then, for $z\in \C$, we have the identity
\begin{equation}\label{eq:Q1}  \int_{-1}^{1}e^{zu}Q_m^{\la,\ve}(u)\,du=\frac{\sqrt{\pi}\Gm(\ap+1/2)}{2}\cdot
\frac{I_{\la+m+\ve}(z)}{(z/2)^{\la+\ve}}.
\end{equation}
Moreover, we have the uniform estimates
\begin{equation*}\label{estimComplex}
|Q_m^{\la,\ve}(u)|\leq C\frac{(1-u^2)^{\ap-\frac{1}{2}}}{|\Gamma(i\bt+1)|},
\end{equation*}
for all $-1\leq u \leq 1$.
Consequently,
\begin{equation*}
%\label{limQ}
Q_m^\la(u) := \lim_{\ve \rightarrow 0}Q_m^{\la,\ve}(u)
\end{equation*}
exists, satisfies \eqref{estimComplex} and
\begin{equation}\label{eq:BesselUltra}
 \int_{-1}^{1}e^{zu}Q_m^{\la}(u)\,du=\frac{\sqrt{\pi}\Gm(\ap+1/2)}{2}\cdot \frac{I_{\la+m}(z)}{(z/2)^{\la}}.
\end{equation}
\end{lem}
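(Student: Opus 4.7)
The plan is to establish the three conclusions of the lemma in sequence. For the identity \eqref{eq:Q1}, I would substitute the definition of $Q_m^{\la,\ve}(u)$ into the left side and swap the order of integration via Fubini--Tonelli, which is justified because $\ve>0$ makes $(1-s)^{i\bt+\ve-1}$ locally integrable near $s=1$ while $\ap>1/2$ makes $(s^2-u^2)^{\ap-1/2}$ integrable in $u$ over $[-s,s]$. The resulting inner integral
$$\int_{-s}^{s}e^{zu}P_m^\ap(u/s)(s^2-u^2)^{\ap-1/2}\,du$$
is handled by the substitution $u=sv$, which converts it into $s^{2\ap}\int_{-1}^{1}e^{zsv}P_m^\ap(v)(1-v^2)^{\ap-1/2}\,dv$. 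Applying Lemma \ref{lem:UltraBessel} (real parameter $\ap>-1/2$) with $z$ replaced by $zs$ evaluates this to $\sqrt{\pi}\Gm(\ap+1/2)s^{\ap}(z/2)^{-\ap}I_{\ap+m}(zs)$. One is then left with a single integral in $s\in(0,1)$ involving $I_{\ap+m}(zs)$ against a power-of-$s$ weight; expanding $I_{\ap+m}(zs)$ in its power series, evaluating the resulting Beta integrals term by term, and identifying the outcome with the series for $(z/2)^{-\la-\ve}I_{\la+m+\ve}(z)$ (after invoking the duplication formula for $\Gm$) produces \eqref{eq:Q1}.

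For the uniform estimate, I would take moduli in the defining integral for $Q_m^{\la,\ve}$ and apply the bound $|P_m^\ap(u/s)|\le 1$ from \eqref{acot} to obtain
$$|Q_m^{\la,\ve}(u)|\le\frac{1}{|\Gm(i\bt+\ve)|}\int_{|u|}^{1}(s^2-u^2)^{\ap-1/2}(1-s)^{\ve-1}s^{m+1}\,ds.$$
The substitution $s=|u|+(1-|u|)\tau$ with $\tau\in(0,1)$ factors out $(1-|u|)^{\ap+\ve-1/2}$ (and hence $(1-u^2)^{\ap-1/2}$ up to a constant depending only on $\ap$), reducing the remaining $\tau$-integral to a Beta function of value $\Gm(\ap+1/2)\Gm(\ve)/\Gm(\ap+\ve+1/2)$. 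The ratio $\Gm(\ve)/|\Gm(i\bt+\ve)|$ is then bounded by $C/|\Gm(i\bt+1)|$ via the Beta function identity and the reflection formula, in the same spirit as in the proof of Lemma \ref{lem:bin.est}.

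For the limit $Q_m^\la(u):=\lim_{\ve\to 0}Q_m^{\la,\ve}(u)$ and the identity \eqref{eq:BesselUltra}, the obstacle is that at $\ve=0$ the factor $(1-s)^{i\bt-1}/\Gm(i\bt)$ is not absolutely integrable. I would therefore integrate once by parts in $s$, using
$$\frac{(1-s)^{i\bt+\ve-1}}{\Gm(i\bt+\ve)}=-\frac{d}{ds}\!\left[\frac{(1-s)^{i\bt+\ve}}{\Gm(i\bt+\ve+1)}\right].$$
Both boundary terms vanish: at $s=|u|$ because $(s^2-u^2)^{\ap-1/2}$ vanishes there (since $\ap>1/2$), and at $s=1$ because $(1-s)^{i\bt+\ve}\to 0$ for $\ve>0$. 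This yields a representation of $Q_m^{\la,\ve}(u)$ as an integral of $(1-s)^{i\bt+\ve}/\Gm(i\bt+\ve+1)$ against the $s$-derivative of the smooth piece $P_m^\ap(u/s)(s^2-u^2)^{\ap-1/2}s^{m+1}$, whose only singularity is of order $(s-|u|)^{\ap-3/2}$ and is integrable thanks to $\ap>1/2$. Since $|(1-s)^{i\bt+\ve}|=(1-s)^\ve\le 1$ on $(0,1)$, dominated convergence allows $\ve\to 0$ pointwise, producing $Q_m^\la(u)$, which inherits the bound from the previous step. The identity \eqref{eq:BesselUltra} then follows by passing to the limit in \eqref{eq:Q1} using the continuity of $\ve\mapsto I_{\la+m+\ve}(z)$.

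The main obstacle I anticipate lies in the bookkeeping of Step 2: extracting the sharp $|\Gm(i\bt+1)|^{-1}$ dependence on $\bt$ requires carefully tracking cancellations among $\Gm(\ve)$, $\Gm(i\bt+\ve)$, and $\Gm(\ap+\ve+1/2)$, very much in the vein of Lemma \ref{lem:bin.est}. By contrast, the series identification of Step 1 and the integration-by-parts regularization of Step 3 are relatively routine once the appropriate substitutions and Beta integrals have been set up.
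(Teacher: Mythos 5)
Your Steps 1 and 3 follow essentially the paper's route. The identity \eqref{eq:Q1} is, in both treatments, the Sonine--Gegenbauer formula $J_{\nu+\mu}(z)z^{-\nu-\mu}=\frac{2^{1-\mu}}{\Gm(\mu)}\int_0^1 J_{\nu}(sz)(sz)^{-\nu}(1-s^2)^{\mu-1}s^{2\nu+1}\,ds$ combined with Lemma \ref{lem:UltraBessel}; the paper quotes the formula from \cite{AAR} and unwinds it, while you re-derive it by expanding the Bessel series and evaluating Beta integrals. One caution there: the term-by-term computation closes only for the weight $(1-s^2)^{i\bt+\ve-1}$, which is what the paper's proof actually works with; with the weight $(1-s)^{i\bt+\ve-1}$ the resulting Beta integrals $B(2k+2\ap+2m+2,\,i\bt+\ve)$ do not recombine into the series of a single Bessel function of order $\la+m+\ve$, duplication formula or not, so make sure you carry the $(1-s^2)$ version through.

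The genuine gap is Step 2. The estimate on $Q_m^{\la,\ve}$ must be uniform in $\ve$ --- that is the whole point, since the ``consequently'' clause transfers it to $Q_m^{\la}=\lim_{\ve\to0}Q_m^{\la,\ve}$. Taking absolute values inside the defining integral destroys this uniformity: your computation produces the factor $\Gm(\ve)/|\Gm(i\bt+\ve)|$, and for $\bt\neq0$ this behaves like $\big(\ve\,|\Gm(i\bt)|\big)^{-1}\to\infty$ as $\ve\to0$, so the claimed bound $\Gm(\ve)/|\Gm(i\bt+\ve)|\le C/|\Gm(i\bt+1)|$ with $C$ independent of $\ve$ is false. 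The singularity $(1-s^2)^{\ve-1}$ is simply not uniformly integrable near $s=1$, and the cancellation coming from the oscillating factor $(1-s^2)^{i\bt}$ is indispensable. The paper's remedy is precisely your Step 3 device applied \emph{before} estimating: writing $f_m(v)=P_m^{\ap}(v)(1-v^2)^{\ap-1/2}$ and $g_m(s)=s^{2\ap+m-1}$, one integrates by parts to get
\begin{equation*}
Q_m^{\la,\ve}(u)=\frac{1}{2\Gm(i\bt+\ve+1)}\int_{|u|}^{1}\frac{d}{ds}\Big(f_m\Big(\frac{u}{s}\Big)g_m(s)\Big)(1-s^2)^{i\bt+\ve}\,ds,
\end{equation*}
bounds $|(1-s^2)^{i\bt+\ve}|\le1$, and then verifies $\int_{|u|}^{1}\big|\frac{d}{ds}\big(f_m(u/s)g_m(s)\big)\big|\,ds\le C(1-u^2)^{\ap-1/2}$. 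Note that this last point requires the quantitative decay near $|u|=1$ (obtained from $|P_m^{\ap}|\le1$ and the bound $|f_m'(v)|\le C(1-v^2)^{\ap-3/2}$ coming from Rodrigues' formula \eqref{eq:Rodrigue}), not merely the integrability of the $(s-|u|)^{\ap-3/2}$ singularity that you invoke. With that rearrangement --- prove the uniform estimate from the post-integration-by-parts representation, and then pass to the limit --- your argument coincides with the paper's proof.
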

\begin{proof}
 For $\Re(\nu)>0$ and $\Re(\mu)>0$, we have (see \cite[Theorem 4.11.1]{AAR})
$$ \frac{J_{\nu+\mu}(z)}{z^{\nu+\mu}}=\frac{2^{1-\mu}}{\Gm(\mu)}\int_0^1 \frac{J_{\nu}(sz)}{(sz)^{\nu}}(1-s^2)^{\mu-1}s^{2\nu+1}\,ds,$$
where $J_{\nu}$ is the Bessel function of order $\nu$. Using the relation $\frac{J_{\nu}(iz)}{(iz/2)^{\nu}}=\frac{I_{\nu}(z)}{(z/2)^{\nu}}$ and taking $\nu=\ap+m $ and $\mu=\varepsilon+i\bt,$ i.e. $\nu+\mu=\la+m+\ve$, the above can be written as \begin{align*}
\frac{I_{\la+m+\ve}(z)}{(z/2)^{\la+m+\ve}}&=\frac{2}{\Gm(i\bt+\varepsilon)}
\int_0^1 \frac{I_{\ap+m}(sz)}{(sz/2)^{\ap+m}}(1-s^2)^{i\bt+\varepsilon-1}s^{2\ap+2m+1}\,ds\\
&=\frac{2}{\Gm(i\bt+\varepsilon)}\int_0^1 \frac{I_{\ap+m}(sz)}{(sz/2)^{\ap}}(1-s^2)^{i\bt+\varepsilon-1}s^{2\ap+m+1}\,ds.
\end{align*}
Therefore, in view of this and \eqref{eq:UltraBessel}, we get
$$ \frac{I_{\la+m+\ve}(z)}{(z/2)^{\la+\ve}}=\frac{2}{\Gm(i\bt+\varepsilon)\sqrt{\pi}\Gm(\ap+1/2)}
\int_0^1 \int_{-1}^1e^{szu}P_m^{\ap}(u)(1-u^2)^{\ap-\frac{1}{2}}\,du\,(1-s^2)^{i\bt+\varepsilon-1}s^{2\ap+m+1}\,ds.$$
By making a change of variables and then a change of the order of integration, the right hand side of the above can be written as
$$\frac{2}{\Gm(i\bt+\varepsilon)\sqrt{\pi}\Gm(\ap+1/2)} \int_{-1}^1 e^{zu}\int_0^1 \chi_{[|u|,\,1]}(s)P_m^{\ap}\Big(\frac{u}{s}\Big)(s^2-u^2)^{\ap-\frac{1}{2}}(1-s^2)^{i\bt+\varepsilon-1}s^{m+1}\,ds\,du.$$
Thus we have the desired identity
$$\frac{I_{\la+m+\ve}(z)}{(z/2)^{\la+\ve}}=\frac{2}{\sqrt{\pi}\Gm(\ap+1/2)} \int_{-1}^1 e^{zu}Q_m^{\la,\ve}(u)\,du.$$

We now proceed to show that $\lim_{\ve \rightarrow 0} Q_m^{\la,\ve}(u)$ exists and satisfies the required estimate. In
 order to do so, let us write
$$f_m(s)=P_m^{\ap}(s)(1-s^2)^{\ap-\frac12},\quad g_m(s)=s^{2\ap+m-1} $$
so that we can write
$$ Q_m^{\la,\ve}(u)=\frac{i\bt+\ve}{\Gm(i\bt+\ve+1)}\int_{|u|}^1f_m\big(\tfrac{u}{s}\big)(1-s^2)^{i\bt+\ve-1}g_m(s)s\,ds. $$ Integrating by parts and noting that the boundary terms vanish (since $\ap > \tfrac12$) we have
$$ Q_m^{\la,\ve}(u)=\frac{1}{2\Gm(i\bt+\ve+1)}\int_{|u|}^1\frac{d}{ds}\Big (f_m\Big(\frac{u}{s}\Big)g_m(s)\Big )(1-s^2)^{i\bt+\ve}\,ds. $$
Observe that $\displaystyle| Q_m^{\la,\ve}(u)|\le \tfrac{1}{2|\Gm(i\bt+1)|}\int_{|u|}^1\big|\tfrac{d}{ds} \big (f_m\big(\tfrac{u}{s}\big)g_m(s)\big )\big|(1-s^2)^{i\bt}\,ds.$  Besides, it is easy to check that we can now pass to the limit as $\ve \rightarrow 0$ and define
$$ Q_m^{\la}(u):=\lim_{\ve \rightarrow 0}Q_m^{\la,\ve}(u)=\frac{1}{2\Gm(i\bt+1)}\int_{|u|}^1\frac{d}{ds}\Big (f_m\Big(\frac{u}{s}\Big)g_m(s)\Big )(1-s^2)^{i\bt}\,ds. $$
On one hand, the boundedness \eqref{acot} of $P_m^{\ap}(u)$ leads to the estimate
$$\int_{|u|}^1\Big|f_m\Big(\frac{u}{s}\Big)\Big|g'_m(s)\,ds\leq C \big(g_m(1)-g_m(|u|)\big)\leq 2C.$$
On the other hand, from Rodrigues' formula \eqref{eq:Rodrigue} it is easy to see that $|f'_m(u)|\leq C(1-u^2)^{\ap-\frac32} $  and therefore,
$$\int_{|u|}^1\Big|f'_m\Big(\frac{u}{s}\Big)\Big|\frac{|u|}{s^2}\;g_m(s)\,ds \leq C \int_{|u|}^1 (s^2-u^2)^{\ap-\frac32} \frac{|u|}{s}s^{m+1} ds$$
which gives the estimate (as  $|u| \leq s\leq 1$)
$$|Q_m^{\la,\ve}(u)|\leq \frac{C}{|\Gm(i\bt+1)|}(1-u^2)^{\ap-\tfrac12}. $$
As $ Q_m^{\la}(u) $ is defined as the limit of $ Q_m^{\la,\ve}(u) $ it follows that $ Q_m^{\la}(u)$ also satisfies the same estimate.
Passing to the limit in \eqref{eq:Q1} we see that
$$\int_{-1}^{1}e^{zu}Q_m^{\la}(u)\,du=\frac{\sqrt{\pi}\Gm(\ap+1/2)}{2}\cdot \frac{I_{\la+m}(z)}{(z/2)^{\la}}. $$
This completes the proof.
\end{proof}

%%%%%%%%%%%%%%%%%
\subsubsection*{Acknowledgments.}
This work was mainly developed when the second author was visiting the Indian Institute of Science, Bangalore.
She is grateful to the Department of Mathematics for the warm hospitality. The authors are immensely grateful to the referee for his thorough reading of the manuscript and useful suggestions which have been incorporated in the revised version.

%%%%%%%%%%%%%%%%%%%%%%%%%%%%%%%%%%%%%%%%%%%%%%%%%%%%%%

\end{document}